\documentclass[11pt, reqno]{amsart}
\pagestyle{plain}
\usepackage{amsmath,amsthm,amssymb,mathrsfs}
\usepackage[abbrev,non-sorted-cites]{amsrefs}
\usepackage{color}
\usepackage{verbatim}
\usepackage{datetime}
\usepackage{hyperref}

\setlength{\parskip}{3pt}
\setlength{\topmargin}{-.1in}
\linespread{1.2}

\textheight=8.2in
\textwidth=6.25in
\oddsidemargin=.125in
\evensidemargin=.125in

\theoremstyle{plain}
  \newtheorem{thm}{Theorem}[section]
  \newtheorem{lem}[thm]{Lemma}
  \newtheorem{prop}[thm]{Proposition}
   
\theoremstyle{definition}
  
	\newtheorem{assumption}[thm]{Assumption}
  \newtheorem{rmk}[thm]{Remark}
  \newtheorem*{ack*}{Acknowledgement}
  \newtheorem*{ques*}{Question}
\theoremstyle{plain}

\numberwithin{equation}{section}


\newcommand\ip[2]{\langle{#1},{#2}\rangle}
\newcommand\RR[4]{R(\bar{e}_{#1},\bar{e}_{#2},\bar{e}_{#3},\bar{e}_{#4})}

\newcommand\pl{\partial}

\newcommand\oh{\frac{1}{2}}
\newcommand\dd{{\mathrm d}}
\newcommand\ii{{\mathrm i}}

\newcommand\w{\wedge}
\newcommand\sm{\sigma}
\newcommand\dt{\delta}
\newcommand\ep{\epsilon}
\newcommand\om{\omega}
\newcommand\ta{\theta}
\newcommand\gm{\gamma}
\newcommand\kp{\kappa}
\newcommand\af{\alpha}
\newcommand\bt{\beta}

\newcommand\Om{\Omega}
\newcommand\Sm{\Sigma}

\newcommand\CC{\mathcal{C}}
\newcommand\CO{\mathcal{O}}
\newcommand\CH{\mathcal{H}}
\newcommand\CV{\mathcal{V}}
\newcommand\CA{\mathcal{A}}
\newcommand\CI{I}

\newcommand\FR{\mathfrak{R}}

\newcommand\RO{\mathrm{O}}
\newcommand\RU{\mathrm{U}}

\newcommand\gt{\mathrm{G}_2}
\newcommand\sps{\mathrm{Spin}(7)}

\newcommand\BS{\mathbb{S}}
\newcommand\BC{\mathbb{C}}
\newcommand\BCP{\mathbb{CP}}
\newcommand\BR{\mathbb{R}}

\newcommand\fs{\mathfrak{s}}
\newcommand\fu{\mathfrak{u}}

\newcommand\ul{\underline}

\newcommand\br{\bar}
\newcommand\ol{\overline}

\newcommand\ot{\otimes}

\newcommand\da{\dot{a}}
\newcommand\db{\dot{b}}
\newcommand\dc{\dot{c}}

\DeclareMathOperator{\tr}{tr}

\DeclareMathOperator{\Hess}{Hess}
\DeclareMathOperator{\End}{End}
\DeclareMathOperator{\rank}{rank}
\DeclareMathOperator{\re}{Re}
\DeclareMathOperator{\im}{Im}


\begin{document}

\title{Mean curvature flows in manifolds of special holonomy}

\author{Chung-Jun Tsai}
\address{Department of Mathematics\\
National Taiwan University\\ Taipei 10617\\ Taiwan}
\email{cjtsai@ntu.edu.tw}

\author{Mu-Tao Wang}
\address{Department of Mathematics\\
Columbia University\\ New York\\ NY 10027\\ USA}
\email{mtwang@math.columbia.edu}

\date{\usdate{\today}}

\thanks{Supported in part by Taiwan MOST grants 102-2115-M-002-014-MY2 and 104-2115-M-002-007 (C.-J.\ Tsai) and NSF grants DMS-1105483 and DMS-1405152 (M.-T.\ Wang).  The second-named author is supported in part by a grant from the Simons Foundation (\#305519 to Mu-Tao Wang). Part of this work was carried out when Mu-Tao Wang was visiting the National Center of Theoretical Sciences at National Taiwan University in Taipei, Taiwan. 
}

\begin{abstract}
We study the uniqueness of minimal submanifolds and the stability of the mean curvature flow in several well-known model spaces of manifolds of special holonomy. These include the Stenzel metric on the cotangent bundle of spheres, the Calabi metric on the cotangent bundle of complex projective spaces, and the Bryant-Salamon metrics on vector bundles over certain Einstein manifolds. In particular, we show that the zero sections, as calibrated submanifolds with respect to their respective ambient metrics,  are unique among compact minimal submanifolds and are dynamically stable under the mean curvature flow.   The proof relies on intricate interconnections of the Ricci flatness of the ambient space and the extrinsic geometry of the calibrated submanifolds.

\end{abstract}

\maketitle


\section{Introduction}\label{sc_intro}

Calibrated submanifolds \cite{ref_HL} in manifolds of special holonomy are not just minimal submanifolds, they actually minimize the volume  functional in their homology classes. Of particular interests are special Lagrangians in Calabi--Yau, associatives and coassociatives in $\gt$, and Cayley submanifolds in $\sps$. These geometric objects attracted a lot of attentions in recent years. On the one hand, they are natural generalizations of algebraic subvarieties in algebraic manifolds and thus are of immense geometric interest.  On the other hand, they appear in various proposals of string theory such as Mirror Symmetry and the M-theory.  The most successful construction of metrics of special holonomy is the Calabi--Yau case, where the celebrated theorem of Yau \cite{ref_Yau} shows the homological condition guarantees the existence of the metric. All other constructions are based on deformation theory,  symmetry reductions or gluing constructions \cite{ref_Joyce} (see also a recent flow approach for $\gt$ construction in \cites{ref_BX, ref_LW}). The scenario of the construction of calibrated submanifolds is similar \cites{ref_IKM, ref_KM}.  For special Lagrangians in Calabi--Yau's, we refer to the work of Schoen--Wolfson \cite{ref_ScW} and Joyce \cites{ref_Joyce2, ref_Joyce3, ref_Joyce4}.

Among all explicitly constructed manifolds of special honolomy, the most well-known ones seem to be the Stenzel metric \cite{ref_St} on the cotangent bundles of spheres (or the Eguchi--Hanson metric in dimension $4$) and the Calabi metric \cite{ref_Calabi} on the cotangent bundles of complex projective spaces.  Similar constructions of Bryant--Salamon \cite{ref_BS} produce $\gt$ and $\sps$ metrics. All of them are based on bundle constructions and the zero sections are calibrated submanifolds. In the article, we study the uniqueness and the dynamical stability of the zero sections of these manifolds. To be more specific, we consider the following manifolds of special honolomy in this article.

\begin{assumption}\label{model_space} Throughout this article, $M$ is a Riemannian manifold of special holonomy that belongs to one of the followings:
\begin{enumerate}

\item the total space of the cotangent bundle of $S^{n}$ with the Stenzel metric\footnote{When $n=1$, the metric is not only Ricci flat, but flat.  We have to exclude this flat case.} with $n>1$;

\item the total space of the cotangent bundle of $\BCP^n$ with the Calabi metric;

\item the  total space of $\BS(S^3)$, $\Lambda^2_-(S^4)$, $\Lambda^2_-(\BCP^2)$, or $\BS_-(S^4)$ with the Ricci flat metric constructed by Bryant--Salamon.
\end{enumerate}
\end{assumption}

In section \ref{sc_Stenzel}, \ref{sc_Calabi} and \ref{sc_BS}, we review the geometry of these metrics in details. 
In all these examples, $M$ is the total space of a vector bundle over a base manifold $B$. We identify $B$ with the zero section of the bundle, which is also considered to be an embedded submanifold of $M$.  In each case, there exists a smooth differential form $\Om$ with the following properties:
\begin{itemize}
\item $\Om$ has comass one, i.e.\ at any $p\in M$, $\Om(L)\leq1$ for any (oriented) subspace $L\subset T_p M$ with dimension $=\dim B$. Indeed, $\Om$ is locally the wedge product of orthonormal $1$-forms. 
\item The form $\Om$ characterizes $B$ by the condition that $\Omega(T_pB)=1$ for all $p\in B$ where $T_pB$ is the (oriented) tangent space of $B$ as a submanifold of $M$ at a point $p\in B$.
\end{itemize}
The precise definition of $\Om$ in each case can be found in \eqref{Stenzel_Om}, \eqref{hk_Om} and \eqref{bd_Om}, respectively.
Let $d(\,\cdot\,)$ denote the distance function to the zero section $B$ with respect to the Riemmanian metric on $M$. For a compact embedded submanifold $\Sigma$ of $M$ with $\dim \Sm = \dim B$, $\Sigma$ is considered to be $\CC^0$ close to the zero section if $d(p)$ is close to $0$ for all $p\in \Sigma$, and $\Sigma$ is considered to be $\CC^1$ close to the zero section if $\Omega(T_p\Sigma)$ is close to $1$ for all $p\in \Sigma$.

Our first result regards the uniqueness property of the zero sections.

\begin{thm}\label{unique}
In each case considered in Assumption \ref{model_space}, the zero section is the unique compact minimal immersed submanifold of the given dimension. 
\end{thm}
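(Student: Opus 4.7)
The strategy is a maximum-principle argument against a carefully chosen test function on $M$. I would construct, in each of the five cases, a smooth function $f \colon M \to \BR$ with the following two properties:
\begin{enumerate}
\item $f \geq 0$ on $M$ and $f^{-1}(0) = B$;
\item at each $p \notin B$ and each $k$-dimensional subspace $\Pi \subset T_p M$ (where $k = \dim B$), the partial trace $\tr_\Pi \Hess_M f(p)$ is strictly positive.
\end{enumerate}
Given such an $f$, if $\Sm$ is any compact minimal immersed submanifold of dimension $k$, then vanishing of the mean curvature gives
\begin{equation*}
\Delta_\Sm (f|_\Sm) = \tr_{T\Sm} \Hess_M f
\end{equation*}
at every point of $\Sm$. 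By (ii) this quantity is strictly positive wherever $f|_\Sm > 0$. Consequently the maximum principle, applied at a point where $f|_\Sm$ attains its maximum on the compact manifold $\Sm$, forces $f|_\Sm \equiv 0$, and hence the image of $\Sm$ lies in $B$. Since $\dim \Sm = \dim B$, the immersion is then a local diffeomorphism of a compact manifold into the connected $B$, hence a surjection, which is the asserted uniqueness.

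The function $f$ is dictated by the cohomogeneity-one structure of the ambient metric. In the two K\"ahler cases (Stenzel and Calabi), I would take $f$ to be a suitable smooth function of the K\"ahler potential of the ambient metric, itself a radial function of the distance to $B$; in the three Bryant--Salamon cases one uses the analogous radial function constructed from the fiber coordinate. With this choice, $\Hess_M f$ respects the orthogonal decomposition $TM = \CH \oplus \CV$ into horizontal and vertical vectors, and its eigenvalues are explicit algebraic expressions in the warping functions of the respective ansatz. The vertical eigenvalues are large and positive, reflecting the strict fiberwise convexity of $f$; the horizontal eigenvalues are smaller and, a priori, of either sign.

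The crux is verifying property (ii), namely that the sum of the smallest $k$ eigenvalues of $\Hess_M f$ is strictly positive at every point off $B$; this is the worst case for an arbitrary $k$-plane $\Pi$. This step is where Ricci flatness of the ambient metric meets the extrinsic geometry of the zero section. Ricci flatness gives an ODE for the warping functions of the ansatz, and this ODE translates into an exact identity between the horizontal and vertical contributions to $\Hess_M f$. Using that identity, one can bound below any possible deficit on the horizontal block by the surplus on the vertical block, so that even the $k$-plane most aligned with the horizontal directions still yields a positive trace. Carrying out this estimate explicitly in each of the five model geometries, using the specific form of the warping ODE in each case, is the main technical obstacle and where I expect the bulk of the work to lie.
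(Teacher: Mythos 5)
Your skeleton is precisely the paper's: its Lemma \ref{lem_key} is your maximum-principle step (for a minimal $\Sm$ one has $\Delta^\Sm(\psi|_\Sm)=\tr_{T\Sm}\Hess\psi$, so positivity of the partial trace off $B$ forces $\psi|_\Sm\equiv 0$ at the maximum), and the test function is the squared distance to the zero section ($\rho^2$ in the Stenzel and Calabi cases, $s=\sum_\mu(y^\mu)^2$ in the Bryant--Salamon cases). The one place you genuinely diverge is in what you propose to verify. You brace for horizontal eigenvalues of $\Hess f$ of either sign and plan to rescue only the positivity of every $k$-dimensional partial trace by playing the vertical surplus against a possible horizontal deficit through a Ricci-flatness identity. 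That extra flexibility turns out to be unnecessary: in all five models the Hessian of the squared distance is positive definite off the zero section. In the Stenzel case \eqref{Hess0} its eigenvalues are $2$, $-2\rho(n-1)C$, $-2\rho A$, $-2\rho B$, and Lemma \ref{lem_est1} gives $A,B,C<0$ for $\rho>0$; your intuition that the horizontal eigenvalues are much smaller is right (they are $O(\rho^2)$ versus $O(1)$ vertically), but they are still positive. In the Calabi case the coefficients $2f/c^2$, $f/b^2$, $\tfrac1f-\tfrac{2f}{c^2}$, $f/a^2$ are all positive, and in the Bryant--Salamon cases Lemma \ref{lem_bd0} reduces positive definiteness to $\af'>0$ and $\bt>2s|\bt'|$, which follow from the special-holonomy ODE \eqref{bd_eq0} and \eqref{bd_relation1}. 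So the special-holonomy equations do enter, but only through signs and monotonicity of the warping functions, not through a trace-balancing identity; the "worst $k$-plane" analysis you identify as the crux is vacuous. Be aware, though, that your write-up defers the entire computation of the connection forms and the sign analysis, which is where all the actual content of the proof lives; the strategy is correct and the verification succeeds, more easily than you anticipate.
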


The mean curvature flow is the parabolic PDE system that deforms a submanifold by its mean curvature vector field, and is formally the negative gradient flow of the volume functional. A calibrated submanifold represents a local minimum of the volume functional.
It is therefore natural to investigate the stability of a calibrated submanifold along the mean curvature flow, which is a nonlinear degenerate PDE system. The nonlinear stability of PDE systems such as the Einstein equation or the Ricci flow is under intense study. Our next result concerns the nonlinear $\CC^1$ stability of the zero sections. 

\begin{thm}\label{stable}
In each case considered in Assumption \ref{model_space}, there exists an $\epsilon>0$ which depends only on the geometry of $M$ such that if $\Sigma$ is a compact embedded submanifold of $M$ and
\begin{align} \label{condition0}
\sup_{p\in \Sm} \left( d({p})+ 1- \Om(T_p\Sm) \right)<\ep ~,
\end{align}
then the mean curvature flow of $\Sigma$ exists for all time and converges to the zero section smoothly. 
\end{thm}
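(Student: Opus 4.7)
The plan is to control the mean curvature flow $\{\Sm_t\}_{t\ge 0}$ of $\Sm_0=\Sm$ by two scalar quantities defined on the evolving submanifold: the ambient distance $d$ to the zero section $B$, and the calibration angle $*\Om := \Om(T\Sm_t)$. By the hypothesis \eqref{condition0}, both quantities start within $\ep$ of their equilibrium values $0$ and $1$ respectively. The strategy is to derive parabolic inequalities for $d$ and $*\Om$ along the flow which, via the maximum principle, force these quantities to remain small and ultimately to decay; uniform regularity and smooth convergence are then obtained by a pseudolocality / $\ep$-regularity argument.

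The first step is to compute the evolution of $*\Om$ under the flow. Since $M$ has special holonomy, $\Om$ will be shown in sections \ref{sc_Stenzel}, \ref{sc_Calabi} and \ref{sc_BS} to be parallel, and a standard calculation for the restriction of a parallel $k$-form to an evolving $k$-submanifold yields a heat-type equation of the schematic form
\begin{equation*}
\left(\pl_t - \Delta_{\Sm_t}\right)\! *\Om \;=\; *\Om\cdot Q_1(A) \;+\; Q_2(A,R,\Om),
\end{equation*}
where $A$ is the second fundamental form of $\Sm_t\subset M$, $R$ is the ambient Riemann tensor, and $Q_1$, $Q_2$ are algebraic in their arguments. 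Exploiting the Ricci flatness of $M$ together with the pointwise algebraic structure of $\Om$, I expect that when $*\Om$ is sufficiently close to $1$ the right-hand side is non-negative, so that the maximum principle preserves the lower bound $*\Om\ge 1-C\ep$.

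The second step is to derive an evolution inequality for a smoothed version of $d^2/2$ (taking into account the focal locus). In each model space, the zero section $B$ is totally geodesic and the ambient Ricci curvature vanishes; combined with the explicit form of the metric on a tubular neighborhood of $B$, Hessian comparison should yield something of the form
\begin{equation*}
\left(\pl_t - \Delta_{\Sm_t}\right) \tfrac{1}{2}d^2 \;\le\; C(1-*\Om) \;-\; c\, d^2
\end{equation*}
for constants $C,c>0$ depending only on $M$ and the chosen neighborhood. Coupled with the lower bound on $*\Om$ from the previous step, this traps $\Sm_t$ inside a fixed tubular neighborhood of $B$ where the ambient geometry is uniformly controlled, and the two inequalities feed back into one another to produce eventual exponential decay of both $\sup_{\Sm_t} d$ and $\sup_{\Sm_t}(1-*\Om)$.

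With the flow trapped and almost-calibrated, I would finish by invoking a local regularity result of White / Ecker--Huisken type: near any calibrated submanifold, a $\CC^1$-close mean curvature flow enjoys uniform bounds on $|A|$ and all its derivatives. Long-time existence then follows from the standard continuation criterion, and smooth convergence of $\Sm_t$ to $B$ follows by combining the exponential decay above with interpolation. The main technical obstacle will be verifying the non-negativity of the reaction term in the $*\Om$ equation under only the $\CC^1$ hypothesis \eqref{condition0}, with no a priori control on $|A|$; this requires extracting a precise algebraic cancellation between the cubic-in-$A$ terms and the ambient curvature terms, and it is here that the special holonomy of $M$ --- together with the rigidity provided by Theorem \ref{unique} identifying $B$ as the unique compact minimal submanifold of its dimension --- enters most essentially.
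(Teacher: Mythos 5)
There is a genuine gap at the central step. First, the premise that the relevant $n$-form $\Om$ is parallel is false and cannot be repaired for free: the form used in the theorem is $\Om=\om^1\w\cdots\w\om^n$ (see \eqref{Stenzel_Om}, \eqref{hk_Om}, \eqref{bd_Om}), chosen precisely so that $\Om(L)=1$ forces $L$ to equal the horizontal subspace $\CH_p$; it is \emph{not} parallel, and section \ref{sc_estimate} of the paper exists to estimate $\nabla\Om$ and $\nabla^2\Om$ (Lemmas \ref{lem_Om_est}, \ref{lem_Om_est_hk}, \ref{lem_Om_est_BS}), which enter the evolution equation \eqref{*Omega} as extra terms. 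If you instead used the genuinely parallel calibration (e.g.\ $\re$ of the holomorphic volume form, or the associative $3$-form), then $*\Om$ near $1$ would only say $T_p\Sm_t$ is near \emph{some} calibrated plane, not near $\CH_p$, and both the Hessian estimate and the conclusion that the limit is the zero section would fail. Second, and more seriously, the non-negativity of the reaction term that you flag as the ``main technical obstacle'' is not true and is not obtained by any cancellation between $A$-terms and curvature terms (there are no cubic-in-$A$ terms to cancel against): the ambient curvature contribution $\sum_{\af,k}\Om_{\af 2\cdots n}R_{\af kk1}+\cdots$ together with the $\nabla\Om$, $\nabla^2\Om$ terms is only $O(\fs^2+\rho^2)$ with no sign, where $\fs^2\sim 1-*\Om$. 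The maximum principle applied to $*\Om$ alone therefore does not close.

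The paper's resolution is to couple the two quantities the other way around from what you propose: one applies the maximum principle to the single function $*\Om-K_0\psi$, where $\psi$ is the distance squared to the zero section, and uses the \emph{strict} convexity $\tr_{\Sm_t}\Hess\psi\ge \frac{1}{K}(\rho^2+\fs^2)$ (the same Hessian computation that proves Theorem \ref{unique}) to absorb all the signless $O(\rho^2+\fs^2)$ error terms, retaining $\tfrac12(*\Om-K_0\psi)|\CA|^2$ on the right-hand side of \eqref{*Omega3}. This retained good term is then what prevents blow-up in the $|\CA|^2$ equation \eqref{|A|^2} (countering $K_6|\CA|^4$ as in the graphical case), which is how the paper gets $\CC^2$ and higher bounds; your alternative of invoking White/Ecker--Huisken local regularity is plausible in principle (cf.\ Remark \ref{rmk_intro}) but would sacrifice the explicit dependence of $\ep$ and is not needed. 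Your $\CC^0$ step is essentially right, except that the clean inequality is $(\pl_t-\Delta^{\Sm_t})\psi=-\tr_{\Sm_t}\Hess\psi\le 0$, with no $(1-*\Om)$ term required.
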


Note that $(1- \Om(T_p\Sm))$ is always non-negative since $\Omega$ has comass one. 
\begin{rmk}
The Ricci flat metric on each manifold $M$ considered in Assumption \ref{model_space} is constructed under some symmetry ansatz.  From the constructions, one sees that the metric turns out to be uniquely determined by the volume of the zero section $B$.  That is to say, $\epsilon$ depends on the volume of the zero section (and also $n$ in case (i) and (ii)).
\end{rmk}

Theorem \ref{unique} is proved in section \ref{sc_rigidity}.  The main point is to prove the convexity of the distance square to the zero section.  It is also the key for the $\CC^0$ convergence in Theorem \ref{stable}, and plays an important role for the $\CC^1$ convergence.

Such a long-time existence and convergence theorem under effective $\CC^1$ bound for higher codimensional mean curvature flows has been established for manifolds of reduced holonomy, namely manifolds that are locally Riemannian products, see for examples \cites{ref_SW, ref_TW, ref_W1, ref_W2, ref_W4}. Theorem \ref{stable}, to the best our knowledge,  appears to be the first one for manifolds of special holonomy. 

Sections \ref{sc_estimate} and \ref{sc_stability} are devoted to the proof of Theorem \ref{stable}.  In section \ref{sc_estimate}, we establish estimates on the covariant derivatives of $\Om$, which is needed for proving the $\CC^1$ convergence.  In section \ref{sc_stability}, we put everything together to prove the stability of the zero section under the mean curvature flow. 

\begin{rmk}
A stability theorem of the mean curvature flow of compact smooth submanifolds can be derived from Simon's general stability theorem for gradient flows \cite{ref_Simon}*{Theorem 2} under the assumption that (1) the initial data is close enough to a stable minimal submanifold in the  $W^{l+2, 2}$ Sobolev norm  for large enough $l$ (which implies at least $\CC^2$ smallness) and (2) the ambient metric is analytic.  Theorem \ref{stable} is a $\CC^1$ stability theorem in which the regularity requirement is lower and the dependence of the smallness constant is explicit.\footnote{The authors learned from Felix Schulze that it is possible to derive a $\CC^1$ stability theorem from White's regularity theorem, the uniqueness of mean curvature flows, and a limiting argument. However, it seems that the $\CC^1$ bound exists by an argument of contradiction and cannot be made explicit.} The proof does not rely on the analyticity of the ambient metric either. The theorem can be turned into a Lipschitz stability theorem (by approximating the initial Lipschitz submanifold by a family of $\CC^1$ submanifolds, see for example \cite{ref_W5}), which seems to be the optimal result for the mean curvature flow.
\label{rmk_intro} \end{rmk}

\subsection{Notations and conventions}

\subsubsection{Riemann curvature tensor}\label{sc_convection1}

In this paper, for a Riemannian manifold with metric $\ip{\,}{\,}$ and Levi-Civita connection $\nabla$, our convention for the Riemann curvature tensor is
\begin{align}
R(X,Y,Z,W) = \ip{\nabla_Z\nabla_W Y - \nabla_W\nabla_Z Y - \nabla_{[Z,W]} Y}{X} ~.
\label{R_curv} \end{align}

Let $\{e_i\}$ be a local orthonormal frame.  Denote the dual coframe by $\{\om^i\}$, and their connection $1$-forms by $\om_i^j$.  The convention here is
\begin{align}
\nabla e_i = \om_i^j\ot e_j \quad\text{ and }\quad \nabla\om^j = -\om_i^j\ot\om^i ~.
\label{str1} \end{align}
Throughout this paper, we adopt the Einstein summation convention that repeated indexes are summed.  Since the frame is orthonormal, $\om_i^j = -\om_j^i$.  It follows from \eqref{str1} that
\begin{align}
\dd\om^j = -\om_i^j\w\om^i ~.
\label{str2} \end{align}

Its curvature form is
\begin{align}
\FR_i^j = \dd\om_i^j - \om_i^k\w\om_k^j ~.
\label{R_curv1} \end{align}
It is equivalent to the Riemann curvature tensor by the following relation:
\begin{align}
\FR_i^j(X,Y) = R(e_j,e_i,X,Y)
\label{R_curv2} \end{align}
for any two tangent vectors $X$ and $Y$.

\subsubsection{Bundle projection}\label{sc_convection2}

Let $\pi:M\to B$ be a vector bundle projection and $\Psi$ be a (locally-defined) smooth differential form on $B$.  The following abuse of notation is performed throughout this paper:  the pull-back of $\Psi$, $\pi^*\Psi$ on $M$ , is still denoted by $\Psi$.

\begin{ack*}
The first author would like to thank Prof. S.-T. Yau for bringing the rigidity question into his attention, and for his generosity in sharing his ideas.  The authors would like to thank Mao-Pei Tsui and Felix Schulze for helpful discussions and interests in this work. 
\end{ack*}

\section{Geometry of the Stenzel metric} \label{sc_Stenzel}

\subsection{The Stenzel metric on $T^*S^n$}

Consider the $n$-dimensional sphere $S^n$ with the standard metric for $n>1$.  Let $\{\ul{\om}^\mu\}_{\mu=1}^n$ be a local orthonormal coframe, and $\ul{\om}_\nu^\mu$ be their connection $1$-forms which satisfy \eqref{str2}. 
As a space form with curvature equal to $1$, its curvature form is
\begin{align}
\ul{\FR}_\nu^\mu &= \dd\ul{\om}_\nu^\mu - \ul{\om}_\nu^\gm\w\ul{\om}_\gm^\mu = \ul{\om}^\mu\w\ul{\om}^\nu ~.
\end{align}

Let $\{y_\mu\}$ be the coordinate for the fibers of $T^*S^n$ induced by $\{\ul{\om}^\mu\}$.  The standard metric on $S^n$ induces the following metric on $T^*S^n$.
\begin{align} \label{metric0}
\sum_{\mu=1}^n \left( (\ul{\om}^\mu)^2 + (\dd y_\mu - y_\nu\,\ul{\om}_\mu^\nu)^2 \right) ~,
\end{align}
where the abuse of notation \ref{sc_convection2} is adopted.

\subsubsection{Spherical coordinate for the fibers}
There are two naturally defined $1$-forms outside the zero section:
\begin{align*}
\frac{1}{r} y_\mu\,\ul{\om}^\mu   \qquad\text{and}\qquad
\frac{1}{r}y_\mu\,\dd y_\mu = \frac{1}{r}y_\mu\,(\dd y_\mu - y_\nu\,\ul{\om}^\nu_\mu)
\end{align*}
where $r = (\sum_\mu (y_\mu)^2)^\oh$.  The first one is the tautological $1$-form rescaled by $1/r$; the second one is the exterior derivative of $r$.  With respect to \eqref{metric0}, they are unit-normed and orthogonal to each other.

It is more convenient to consider the metric \eqref{metric0} by another coframe, which is an extension of the above two $1$-forms.  To start, extend the vector $\frac{1}{r}(y_1,\cdots,y_n)$ to an orthonormal frame for $\BR^n$.  This can be done on any simply-connected open subset of $\BR^n\backslash\{0\}$.  To be more precise, choose a smooth map $T_\nu^\mu$ from a simply-connected open subset of $\BR^n\backslash\{0\}$ to $\RO(n)$ such that
\begin{align*}
T_\nu^1 (y) = \frac{1}{r} y_\nu \qquad\text{for}\quad\nu=1,2,\ldots,n \quad\text{and}\quad y\in \text{Domain}(T_\nu^\mu)\subset\BR^n\backslash\{0\}.
\end{align*}

For example, the standard spherical coordinate system on $\BR^n\backslash\{0\}$ will do. When $n=3$ with $y_1=r\sin\theta\sin\phi, y_2=r\sin\theta \cos\phi, y_3=r\cos\theta$, one can take 
$T_\mu^2=\frac{1}{r} \frac{\pl y_\mu}{\pl \theta} $ and $T_\mu^3=\frac{1}{r\sin\theta} \frac{\pl y_\mu }{\pl \phi}$, $\mu=1, 2, 3$.

The metric \eqref{metric0} has the following orthonormal coframe
\begin{align} \begin{split}
\sm^\mu &= T_\nu^\mu\,\ul{\om}^\nu ~, \\
\sm^{n+\mu} &= (T^{-1})_\mu^\nu\,(\dd y_\nu - y_\gm\,\ul{\om}_\nu^\gm) = T_\nu^\mu\,(\dd y_\nu - y_\gm\,\ul{\om}_\nu^\gm)
\end{split} \label{frame0} \end{align}
for $\mu\in\{1,\ldots,n\}$.

\subsubsection{The Stenzel metric}
With this coframe \eqref{frame0}, the Stenzel metric takes the form
\begin{align}
\left( c(r)\,\sm^1 \right)^2 + \sum_{j=2}^n \left( a(r)\,\sm^j \right)^2
+ \left( c(r)\,\sm^{n+1}\right)^2 + \sum_{j=2}^n \left( \frac{b(r)}{r}\,\sm^{n+j} \right)^2 ~.
\label{metric1} \end{align}
The coefficient functions are defined by
\begin{align} \begin{split}
a^2 &= \frac{1}{4}h'(r)\coth r ~, \\
b^2 &= \frac{1}{4}h'(r)\tanh r ~, \\
c^2 &= \frac{1}{4}h''(r)
\end{split} \label{coeff0} \end{align}
where $h'(r)$ is the solution of the ODE
\begin{align}
\frac{\dd}{\dd r} \left(h'(r)\right)^n &= 2^{n+1} n  \left(\sinh(2r)\right)^{n-1}
\qquad\text{with}\quad h'(0) = 0 ~.
\label{Ricci0} \end{align}
Here, prime $(\,\,\,)'$ denotes the derivative with respect to $r$.  The function $h$ is the K\"ahler potential in the paper of Stenzel \cite{ref_St}*{section 7}.  We remark that the metric here differs from that in \cite{ref_CGLP}*{section 2} by a factor of $2^{n-1}/n$ in \eqref{Ricci0}.  The normalization here is chosen such that the restriction of metric \eqref{metric1} to the zero section is the metric of the round sphere of radius $1$ and dimension $n$, and hence the volume of the zero section is $2\pi^{\frac{n}{2}}/\Gamma(n/2)$.

\subsubsection{Connection and Ricci flat equation}
Take the orthonormal coframe:
\begin{align}
\om^1 &= c(r)\,\sm^1~,    &\om^j &= a(r)\,\sm^j ~,
&\om^{n+1} &= c(r)\,\sm^{n+1} = c(r)\,\dd r ~,    &\om^{n+j} &= \frac{b(r)}{r}\,\sm^{n+j}
\label{frame1} \end{align}
for $j\in\{2,\ldots,n\}$.  The indices $i,j,k,\ldots$ will be assumed to belong to $\{2,\ldots,n\}$.

It is useful to introduce the new radial function $\rho$ by $\rho = \int_0^r c(u)\dd u$.  Since $\dd\rho = c(r)\,\sm^{n+1}$ and $\rho=0$ at the zero section, in view of \eqref{metric1}, $\rho$ is the geodesic distance to the zero section with respect to the Stenzel metric.  Denoting by dot $\dot{(\,\,\,)}$ the derivative with respect to $\rho$, we have 
\begin{align*}
\dot{f} &= \frac{1}{c(r)} f' ~.
\end{align*}
The connection $1$-forms $\om_\nu^\mu$ of \eqref{frame1} can be found by a direct computation:
\begin{align} \begin{split}
\om^1_{n+1} &= \frac{\dc}{c}\,\om^1 ~,\\
\om^j_{n+i} &= C\delta_i^j\,\om^1 ~,
\end{split} &\begin{split}
\om^{j}_{n+1} &= \frac{\da}{a}\,\om^j ~,\\
\om^{n+j}_1 &= A\,\om^j ~,
\end{split} &\begin{split}
\om^{n+j}_{n+1} &= \frac{\db}{b}\,\om^{n+j} ~,\\
\om^1_j &= B\,\om^{n+j}
\end{split} \label{conn0} \end{align}
where
\begin{align}
A &= \frac{a^2-b^2-c^2}{2abc} ~,
&B &= \frac{b^2-a^2-c^2}{2abc} ~,
&C &= \frac{c^2-a^2-b^2}{2abc} ~.
\label{coeff1} \end{align}
The rest of the components $\om_i^j$ and $\om_{n+i}^{n+j}$ satisfy
\begin{align}
\om_i^j &= \om_{n+i}^{n+j} = T^j_\mu\,\ul{\om}^\mu_\nu\,T^i_\nu - (\dd T^j_\mu)\,T^i_\mu ~.
\label{conn1} \end{align}

In \cite{ref_CGLP}*{section 2}, Cveti{\v{c}} et al.\ derived the Stenzel metric in a different way.  As a result of their derivations, $a, b, c$ satisfy the following differential system:
\begin{align}
\frac{\da}{a} + A &= 0 ~,
&\frac{\db}{b} + B &= 0 ~,
&\frac{\dc}{c} + (n-1)C &= 0 ~.
\label{Ricci1} \end{align}
One can also check these directly by \eqref{coeff0}, \eqref{Ricci0} and \eqref{coeff1}.  In fact,  Cveti{\v{c}} et al.\ solved the system \eqref{Ricci1} and reconstructed the Stenzel metric in the above form \eqref{metric1}.  The expressions of the K\"ahler form and the holomorphic volume form are quite simple in terms of the coframe \eqref{frame1}.  The K\"ahler form is $\sum_{\mu=1}^n \om^\mu\w\om^{n+\mu}$, and the holomorphic volume form is $(\om^1+\ii\,\om^{n+1})\w(\om^2+\ii\,\om^{n+2})\w\cdots(\om^{n}+\ii\,\om^{2n})$.  With the above relations, one can check that these two differential forms are parallel. { With this understanding, the complex structure $\CI$ in terms of the dual frame $\{\bar{e}_1,\ldots,\bar{e}_{2n}\}$ of \eqref{frame1} sends $\bar{e}_\mu$ to $\bar{e}_{n+\mu}$ and sends $\bar{e}_{n+\mu}$ to $-\bar{e}_\mu$ for any $\mu\in\{1,\ldots,n\}$.}

\subsection{Coefficient functions and curvature}
The geometry of the Stenzel metric is encoded in the functions $A$, $B$ and $C$.  In this subsection, we summarize the properties that will be used later.  Note that \eqref{Ricci0} implies that $h'>0$ and $h''>0$ when $r>0$.  By \eqref{coeff0} and \eqref{Ricci1},
\begin{align} \begin{split}
A &= - \frac{1}{\sqrt{h''}} \left( \frac{h''}{h'} - \frac{2}{\sinh(2r)} \right) ~,\\
B &= - \frac{1}{\sqrt{h''}} \left( \frac{h''}{h'} + \frac{2}{\sinh(2r)} \right) ~,\\
C&= -\frac{1}{n-1} \frac{1}{\sqrt{h''}} \frac{h'''}{h''} ~.
\end{split} \label{coeff2} \end{align}

Instead of $r$, we state the estimates in terms of $\rho = \int_0^r c(u)\dd u$.
\begin{lem} \label{lem_est1}
The functions $A$, $B$ and $C$ are negative when $\rho>0$.  Moreover, there exists a constant $K>1$ which depends only on $n$ such that $|A|/\rho$, $|C|/\rho$ and $|B|\rho$ are all bounded between $1/K$ and $K$ for any point $p$ with $0<\rho (p)<1$.
\end{lem}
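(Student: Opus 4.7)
The plan is to derive the lemma directly from the closed-form expressions \eqref{coeff2}, by exploiting the identities obtained from successive differentiations of the ODE \eqref{Ricci0}. I would split the argument into a global sign analysis and a local asymptotic analysis, then combine them with compactness.

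For the signs, $B<0$ is immediate: from \eqref{Ricci0} and $h'(0)=0$ one has $h'>0$ and $h''>0$ for $r>0$, so both summands in the defining expression of $B$ are positive. For $A$, I would differentiate \eqref{Ricci0} once to obtain the key identity $(h')^{n-1}h''=2^{n+1}(\sinh 2r)^{n-1}$, which rewrites $h''/h'$ as $(\sinh 2r)^{n-1}/\bigl(n\int_0^r(\sinh 2u)^{n-1}\,du\bigr)$. The inequality $h''/h'>2/\sinh 2r$ then reduces to
\[
(\sinh 2r)^n > 2n\int_0^r(\sinh 2u)^{n-1}\,du,
\]
which I would verify by a single zero-at-zero plus positive-derivative argument: the difference vanishes at $r=0$ and has derivative $2n(\sinh 2r)^{n-1}(\cosh 2r-1)>0$. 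For $C<0$ I need $h'''>0$; differentiating the identity once more and using it again to eliminate $(h'')^2$ reduces the question to
\[
2n\cosh 2r\int_0^r(\sinh 2u)^{n-1}\,du > (\sinh 2r)^n,
\]
which admits the same kind of argument (zero at $r=0$, derivative $4n\sinh 2r\int_0^r(\sinh 2u)^{n-1}\,du>0$).

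For the quantitative scalings near $\rho=0$, I would expand the integrand in \eqref{Ricci0} as a power series. Integration gives $(h'(r))^n=2^{2n}r^n\bigl(1+\tfrac{2n(n-1)}{3(n+2)}r^2+O(r^4)\bigr)$, hence $h'(r)=4r+\tfrac{8(n-1)}{3(n+2)}r^3+O(r^5)$ and $h''(r)=4+O(r^2)$, $h'''(r)=\tfrac{16(n-1)}{n+2}r+O(r^3)$. A direct substitution into \eqref{coeff2}, after cancelling the leading $1/r$ singularities between $h''/h'$ and $2/\sinh 2r$, yields
\[
A=-\tfrac{n}{n+2}r+O(r^3),\qquad C=-\tfrac{2}{n+2}r+O(r^3),\qquad B=-\tfrac{1}{r}+O(r).
\]
Since $c(r)=\tfrac12\sqrt{h''}=1+O(r^2)$, the substitution $\rho=\int_0^r c(u)\,du$ gives $\rho=r+O(r^3)$, so $r$ and $\rho$ are comparable near $0$, producing the stated ratios $|A|/\rho$, $|C|/\rho$, $|B|\rho$ with positive finite limits as $\rho\to 0$.

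The uniform bound on the whole range $0<\rho\le 1$ then follows from compactness: on the compact interval $r\in[0,r_1]$ corresponding to $\rho\in[0,1]$, the three functions $|A|/\rho$, $|C|/\rho$, $|B|\rho$ are continuous (and extend continuously to $\rho=0$ by the asymptotics just obtained) and strictly positive by the sign analysis, hence attain positive minima and finite maxima depending only on $n$; their reciprocals furnish $K$. The main technical obstacle is the sign of $C$: establishing $h'''>0$ is not automatic from $h',h''>0$, and requires the second differentiation of \eqref{Ricci0} together with the back-substitution that turns the resulting expression into an integral inequality amenable to the same monotonicity trick used for $A$.
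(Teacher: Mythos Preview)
Your proof is correct and follows essentially the same route as the paper: both establish the signs from the integral formulas for $h''/h'$ and $h'''/h''$ derived from \eqref{Ricci0}, and both obtain the quantitative bounds from the Taylor expansion $h'=4r\bigl(1+\tfrac{2(n-1)}{3(n+2)}r^2+\cdots\bigr)$ together with $\rho\sim r$. The only cosmetic difference is in the inequality for $C<0$: where you differentiate the function $2n\cosh 2r\int_0^r(\sinh 2u)^{n-1}du-(\sinh 2r)^n$ and check its derivative is positive, the paper instead pulls $\cosh 2r$ inside the integral (using monotonicity of $\cosh$) and recognizes the result as $(\sinh 2r)^n$ directly; both arguments are one-line computations.
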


\begin{proof}
It follows from \eqref{Ricci0} that
\begin{align} \begin{split}
\frac{h''}{h'} &= \frac{\big( \sinh(2r) \big)^{n-1}}{n \int_0^r \big(\sinh(2u)\big)^{n-1} \dd u} ~, \\
\frac{1}{n-1}\frac{h'''}{h''} &= \frac{2n\cosh(2r) \left(\int_0^r\big(\sinh(2u)\big)^{n-1}\dd u\right) - \big( \sinh(2r) \big)^n}{n \sinh(2r) \left( \int_0^r\big( \sinh(2u) \big)^{n-1}\dd u \right)} ~.
\end{split} \label{coeff3} \end{align}

To prove that $C<0$, we estimate
\begin{align*}
2n\cosh(2r) \left( \int_0^r \big(\sinh(2u)\big)^{n-1}\dd u \right) &> 2n \int_0^r \cosh(2u) \big(\sinh(2u)\big)^{n-1}\dd u = \big(\sinh(2r)\big)^{n}
\end{align*}
for any $r>0$.  It follows from \eqref{coeff3} that $C$ is negative.  For $A<0$,
\begin{align*}
\frac{h''}{h'} - \frac{2}{\sinh(2r)} &= \frac{\big(\sinh(2r)\big)^n - 2n \int_0^r \big(\sinh(2u)\big)^{n-1} \dd u}{n\sinh(2r) \int_0^r\big( \sinh(2u) \big)^{n-1}\dd u} \\
&= \frac{2n \int_0^r \big(\sinh(2u)\big)^{n-1} \big( \cosh(2u)-1 \big) \dd u}{n\sinh(2r) \int_0^r\big( \sinh(2u) \big)^{n-1}\dd u} > 0
\end{align*}
for any $r>0$.  It follows that $A<0$.  Since $B\leq A$, $B$ is also negative.

The second assertion is a direct consequence of the power series expansion at $r=0$.  It follows from \eqref{Ricci0} that the Taylor series expansion of $h'(r)$ near $r=0$ is
\begin{align*}
h' = 4r \left( 1 + \frac{2(n-1)}{3(n+2)}r^2 + \CO(r^3) \right) ~.
\end{align*}
It then follows that $\rho = r+\CO(r^2)$, $A = -\frac{n}{n+2}r + \CO(r^2)$, $C = -\frac{2}{n+2}r + \CO(r^2)$ and $B = -r^{-1} + \CO(1)$.  This finishes the proof of the lemma.
\end{proof}

We remark that \eqref{Ricci1} and the negativity of $A$, $B$ and $C$ imply that $a$, $b$ and $c$ are increasing functions.  Other quantities we will encounter are the derivatives of $A$, $B$ and $C$ with respect to $\rho$.  It turns out that they are degree two polynomials in $A$, $B$ and $C$.

\begin{lem} \label{lem_est2}
The functions $\dot{A}$, $\dot{B}$ and $\dot{C}$ obey:
\begin{align*}
\dot{A} &= -nBC + A(A+B+C) ~, \\
\dot{B} &= -nAC + B(A+B+C) ~, \\
\dot{C} &= -2AB + (n-1)C(A+B+C) ~.
\end{align*}
\end{lem}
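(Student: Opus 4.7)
The proof will be an entirely algebraic computation that feeds the Ricci-flat ODE system \eqref{Ricci1} into the explicit formulas \eqref{coeff1}. The strategy is to differentiate the defining relation $2abc\,A = a^2-b^2-c^2$ (and its analogues for $B$ and $C$) along $\rho$, then reassemble the result as a polynomial in $A$, $B$, $C$.

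The two ingredients that make the reassembly possible are already in the paper. From \eqref{Ricci1} I have
\begin{align*}
\frac{\da}{a} = -A, \qquad \frac{\db}{b} = -B, \qquad \frac{\dc}{c} = -(n-1)C,
\end{align*}
and from \eqref{coeff1} the pairwise sums simplify to
\begin{align*}
A+B = -\frac{c}{ab}, \qquad A+C = -\frac{b}{ac}, \qquad B+C = -\frac{a}{bc}.
\end{align*}
Thus every ratio $\da/(bc)$, $\db/(ac)$, $\dc/(ab)$ that appears after differentiation can be rewritten purely in terms of $A$, $B$, $C$; for instance $\da/(bc) = (a/bc)(\da/a) = A(B+C)$, and similarly for the other two.

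With this in hand I would carry out the computation for $\dot A$ as follows. Differentiating $2abc\,A = a^2-b^2-c^2$ and dividing by $2abc$ gives
\begin{align*}
\left(\frac{\da}{a}+\frac{\db}{b}+\frac{\dc}{c}\right)A + \dot A = \frac{\da}{bc} - \frac{\db}{ac} - \frac{\dc}{ab}.
\end{align*}
Substituting the two ingredients above turns this into
\begin{align*}
-\bigl(A+B+(n-1)C\bigr)A + \dot A = A(B+C) - B(A+C) - (n-1)C(A+B),
\end{align*}
which after expansion collapses to $\dot A = A(A+B+C) - nBC$. The identity for $\dot B$ follows by the evident $A\leftrightarrow B$ symmetry of the defining relations, and the identity for $\dot C$ is obtained by applying the exact same procedure to $2abc\,C = c^2-a^2-b^2$, the only change being that the factor $(n-1)$ coming from $\dc/c$ now multiplies the surviving diagonal term rather than being canceled.

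There is no real obstacle here; this is bookkeeping. The only thing to be careful about is signs, since several minus signs conspire: the minus signs in the Ricci equations, the minus signs in the expressions $A+B=-c/(ab)$ etc., and the signs in the defining relations for $A$, $B$, $C$ themselves. A tidy way to keep track is to write out the three identities $\da/(bc) = A(B+C)$, $\db/(ac) = B(A+C)$, $\dc/(ab) = (n-1)C(A+B)$ once at the outset and reuse them mechanically.
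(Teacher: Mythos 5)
Your proof is correct and follows essentially the same route as the paper: both differentiate algebraic identities from \eqref{coeff1} and substitute the Ricci-flat system \eqref{Ricci1}. The only cosmetic difference is that you differentiate the defining relations $2abc\,A=a^2-b^2-c^2$ directly, whereas the paper differentiates the pairwise sums $B+C=-a/(bc)$, etc., and then solves the resulting linear system for $\dot A$, $\dot B$, $\dot C$.
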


\begin{proof}
The equation \eqref{coeff1} can be rewritten as
\begin{align}
B+C &= -\frac{a}{bc} ~,  &C+A &= -\frac{b}{ac} ~,  &A+B &= -\frac{c}{ab} ~.
\label{coeff4} \end{align}
By using \eqref{Ricci1},
\begin{align*}
\dot{B} + \dot{C} &= (B+C) \left( -A + B + (n-1) C \right) ~,\\
\dot{C} + \dot{A} &= (C+A) \left( -B + A +(n-1) C \right) ~, \\
\dot{A} + \dot{B} &= (A+B) \left( -(n-1) C + A + B \right) ~.
\end{align*}
The lemma follows from these formulae.
\end{proof}

\subsubsection{The Riemann curvature tensor} \label{sc_Stenzel_cruv}

In \cite{ref_CGLP}*{section 2}, Cveti{\v{c}} et al.\ also computed the components of the Riemann curvature tensor of the metric \eqref{metric1}.

Note that \eqref{coeff4} implies that
\begin{align*}
(A+B)(A+C) &= \frac{1}{a^2} ~,  &(B+A)(B+C) &= \frac{1}{b^2} ~,  &(C+A)(C+B) &= \frac{1}{c^2} ~.
\end{align*}
By applying \eqref{conn0}, \eqref{Ricci1}, \eqref{coeff4} and Lemma \ref{lem_est2} to \cite{ref_CGLP}*{(2.14)}, we find that the components of the Riemann curvature tensor are all degree two polynomials in $A$, $B$ and $C$:
\begin{align*}
\RR{1}{j}{1}{j} 
&= AB + BC - nCA ~, \\
\RR{1}{n+j}{1}{n+j} 
&= AB + CA - nBC ~, \\
\frac{1}{n-1}\RR{1}{n+1}{1}{n+1} &= \RR{1}{n+1}{n+j}{j} \\
&= (n-1)(CA + BC) - 2AB ~, \\
\RR{i}{n+k}{j}{n+l} &= -AB(\dt_{ij}\dt_{kl}+\dt_{il}\dt_{jk}) + (BC+AC)\dt_{ik}\dt_{jl}~, \\
\RR{i}{k}{j}{l} 
&= (AB + BC + CA)(\dt_{ij}\dt_{kl} - \dt_{il}\dt_{jk})
\end{align*}
and other inequivalent components vanish.  Here, $\bar{e}_1,\ldots,\bar{e}_{2n}$ is the dual frame of \eqref{frame1}.
By equivalence we mean that the curvature is a quadrilinear map $R$ satisfying the condition
\begin{align}
R(X,Y,Z,W) = -R(X,Y,W,Z) = R(Z,W,X,Y) = R(X,Y,\CI Z,\CI W) ~.
\label{sym1} \end{align}

The following property of the Riemann curvature tensor will help simplify the calculation in the Stenzel metric case:
$ R(\bar{e}_\mu,\bar{e}_{n+\nu},\bar{e}_{n+\delta} ,\bar{e}_{n+\epsilon}) = 0 = R(\bar{e}_{n+\mu}  ,\bar{e}_{\nu},\bar{e}_{\delta} ,\bar{e}_{\epsilon})$ for any $\mu, \nu, \delta, \epsilon \in\{1,\ldots,n\}$.  It will be used for the estimate \eqref{est_mixed} in the proof of Theorem \ref{stable}.

\section{Geometry of the Calabi metric}\label{sc_Calabi}

The Eguchi--Hanson metric has another higher dimensional generalization.  There exists a hyper-K\"ahler metric on the cotangent bundle of the complex projective space, $T^*\BCP^n$.  The metric is thus Ricci flat.  It was constructed by Calabi in \cite{ref_Calabi}*{section 5} by solving the K\"ahler potential under an ansatz.  Since we are going to study the Riemannian geometric properties of the metric, it is more convenient to describe the metric in terms of a moving frame.

\subsection{The Calabi metric on $T^*\BCP^n$}

Consider the $n$-dimensional complex projective space $\BCP^n$ with the Fubini--Study metric.  Let $\{{\ta}^\mu\}$ be a local unitary coframe of type $(1,0)$.  That is to say, the Fubini--Study metric is
\begin{align*}
\sum_{\mu=1}^n | {\ta}^\mu |^2 ~.
\end{align*}
Denote by ${\ta}_\nu^\mu$ the corresponding connection $1$-forms.  They are determined uniquely by the relations:
\begin{align} \label{hk_conn1}
\dd{\ta}^\mu = -{\ta}_\nu^\mu\w{\ta}^\nu   \qquad\text{and}\qquad
{\ta}_\mu^\nu + \ol{{\ta}_\nu^\mu} =0 ~.
\end{align}
The curvature of the Fubini--Study metric is
\begin{align*}
\Theta_\nu^\mu &= \dd\ta_\nu^\mu - \ta_\nu^\gm\w\ta_\gm^\mu = \ta^\mu\w\ol{\ta^\nu} + \delta_\nu^\mu\,\ta^\gm\w\ol{\ta^\gm} ~.
\end{align*}
The curvature formula implies that its sectional curvature lies between $1$ and $4$, and is equal to $4$ if and only if the $2$-plane is complex.  The detailed discussion of the Fubini--Study metric in terms of the moving frame can be found in \cite{ref_Chern}*{section 8}.

Let $\{z_\mu\}$ be the complex coordinate for the fibers of $(T^*\BCP^n)^{(1,0)}$.  Then, the Fubini--Study metric induces the following metric on $T^*\BCP^n$.
\begin{align} \label{hk_metric0}
\sum_{\mu=1}^n \left( |\ta^\mu|^2 + |\dd z_\mu - z_\nu\,\ta_\mu^\nu|^2 \right) ~.
\end{align}
The complex structure of $\BCP^n$ induces a complex structure on $T^*\BCP^n$, with respect to which $\ta^\mu$ and $\dd z_\mu - z_\mu\,\ta_\mu^\nu$ are $(1,0)$-forms.

\begin{rmk}
We briefly explain the convention of the correspondence between real and complex moving frames.  Write $\ta^\mu$ as $\ul{\om}^{2\mu-1} + \ii\,\ul{\om}^{2\mu}$.  Then, $\{\ul{\om}^{2\mu-1}\}_{\mu=1}^n\cup\{\ul{\om}^{2\mu}\}_{\mu=1}^n$ constitutes an orthonormal coframe.  Let $\{\ul{e}_{2\mu-1}\}_{\mu=1}^{n}\cup\{\ul{e}_{2\mu}\}_{\mu=1}^n$ be the dual orthonormal frame.  They satisfy $J(\ul{e}_{2\mu-1}) = \ul{e}_{2\mu}$, where $J$ is the complex structure as an endomorphism on the (real) tangent bundle.  Denote by $\ul{\om}_A^B$ the connection $1$-forms, where $1\leq A,B\leq 2n$.  Namely,
$\nabla\ul{e}_A = \ul{\om}_A^B\ot\ul{e}_B$.  Since $J$ is parallel, $\ul{\om}_{2\nu}^{2\mu} = \ul{\om}_{2\nu-1}^{2\mu-1}$ and $\ul{\om}_{2\nu}^{2\mu-1} = - \ul{\om}_{2\nu-1}^{2\mu}$.  The Hermitian connection $\ta_\nu^\mu$ is equal to
\begin{align} \label{hk_conn0}
\ul{\om}_{2\nu-1}^{2\mu-1} + \ii\,\ul{\om}_{2\nu-1}^{2\mu} = \ul{\om}_{2\nu}^{2\mu} - \ii\,\ul{\om}_{2\nu}^{2\mu-1} ~.
\end{align}

For the total space of the cotangent bundle, write $z_\mu$ as $x_\mu - \ii\,y_\mu$.  Under the (real) isomorphism
\begin{align*} \begin{array}{ccl}
(T^*\BCP)^{(1,0)} & \cong & T^*\BCP^n \quad \text{(real cotangent bundle)} \\
z_\mu\,\ta^\mu & \leftrightarrow & \re(z_\mu\,\ta^\mu) = x_\mu\,\ul{\om}^{2\mu-1} + y_\mu\,\ul{\om}^{2\mu} ~,
\end{array} \end{align*}
the metric \eqref{hk_metric0} is equal to
\begin{align*}
\sum_{\mu=1}^n \left( (\ul{\om}^{2\mu-1})^2 + (\ul{\om}^{2\mu})^2 + (\dd x_\mu - x_\nu\,\ul{\om}_{2\mu-1}^{2\nu-1} - y_\nu\,\ul{\om}_{2\mu-1}^{2\nu})^2 + (\dd y_\mu - x_\nu\,\ul{\om}_{2\mu}^{2\nu-1} - y_\nu\,\ul{\om}_{2\mu}^{2\nu})^2 \right) ~.
\end{align*}
\end{rmk}

\subsubsection{Spherical coordinate for the fibers}
Let $r = \sqrt{\sum_\mu |z_\mu|^2}$ be the distance to the zero section with respect to the metric \eqref{hk_metric0}.  Again, the exterior derivative of $r$ and the tautological $1$-form are two naturally defined $1$-forms on $T^*\BCP^n$.  In terms of the complex coordinate, they read
\begin{align*}
\re\left( \frac{1}{r}\br{z}_\mu\,(\dd z_\mu - z_\nu\,\ta^\nu_\mu) \right)  \qquad\text{and}\qquad
\re\left( z_\mu\,\ta^\mu \right) ~,
\end{align*}
respectively.  Their images under the complex structure give another two $1$-forms, which are the imaginary parts of the above two $(1,0)$-forms multiplied by $-1$.

With this understood, consider the following complex version of the spherical change of gauge.  It means an extension of $\frac{1}{r}(z_1,\cdots,z_n)$ to a unitary frame for $\BC^n$, which can be done on any simply connected open subset of $\BC^n\backslash\{0\}$.  More precisely, choose a smooth map $T_\nu^\mu$ from a simply connected open subset of $\BC^n\backslash\{0\}$ to $\RU(n)$ such that
\begin{align*}
T_\mu^1(z) = \frac{1}{r} z_\mu \qquad\text{for}\quad\mu=1,2,\ldots,n \quad\text{and}\quad z \in \text{Domain}(T_\mu^\nu)\subset\BC^n\backslash\{0\}.
\end{align*}
It follows that the following $1$-forms also constitute a unitary coframe of type $(1,0)$ for \eqref{hk_metric0}
\begin{align} \begin{split}
\sm^\mu &= T_\nu^\mu\,{\ta}^\nu ~, \\
\sm^{n+\mu} &= (T^{-1})^\nu_{\mu}\,(\dd z_\nu - z_\gm\,{\ta}_\nu^\gm) = \ol{T_\nu^\mu}\,(\dd z_\nu - z_\gm\,{\ta}_\nu^\gm)
\end{split} \label{hk_frame0} \end{align}
where $\mu\in\{1,\ldots,n\}$.

\subsubsection{The Calabi metric}

In terms of this coframe, the Calabi metric is of the following form
\begin{align} \label{hk_metric1}
\left| c(r)\,\sm^1 \right|^2 + \sum_{j=2}^n \left| b(r)\,\sm^j \right|^2 + \left( h(r)\,\dd r \right)^2 + \left( \frac{f(r)}{r}\,\im\sm^{n+1}\right)^2 + \sum_{j=2}^n \left| \frac{a(r)}{r}\,\sm^{n+j} \right|^2 ~,
\end{align}
where
\begin{align} \begin{split}
a &= \sinh(r) ~, \\
b&= \cosh(r) ~,
\end{split} &\begin{split}
c &= h = \sqrt{\cosh(2r)} ~, \\
f&= \frac{1}{2}\frac{\sinh(2r)}{\sqrt{\cosh(2r)}} ~.
\end{split} \label{hk_coeff0} \end{align}

In \cite{ref_DS}, Dancer and Swann found an easy way to construct the hyper-K\"ahler metric.  They wrote down the ansatz for the three hyper-K\"ahler forms, and imposed the $\dd$-closed condition.  One of the K\"ahler forms is
\begin{align} \label{hk_kform1}
\frac{hf}{r}\,\dd r\w\im{\sm^{n+1}} + \frac{\ii}{2}c^2\,\sm^1\w\ol{\sm^1} + \frac{\ii}{2}b^2\sum_{j=2}^n\sm^j\w\ol{\sm^j} + \frac{\ii}{2}\frac{a^2}{r^2}\sum_{j=2}^n\sm^{n+j}\w\ol{\sm^{n+j}} ~,
\end{align}
and the other two are the imaginary and real parts of
\begin{align} \label{hk_kform2}
hc\,\dd r\w\sm^1 + \frac{\ii fc}{r}\,(\im{\sm^{n+1}})\w\sm^1 - \frac{ab}{r}\sum_{j=2}^n\sm^j\w\sm^{n+j} ~.
\end{align}
If \eqref{hk_kform1} and \eqref{hk_kform2} are annihilated by the exterior derivative, the coefficient functions must obey
\begin{align} \begin{split}
&\frac{\dd a^2}{\dd r} = 2hf = \frac{\dd b^2}{\dd r} ~, \quad \frac{\dd c^2}{\dd r} = 4hf ~, \quad a^2 + b^2 = c^2 ~, \\
& \frac{\dd(ab)}{\dd r} = hc ~,\quad \frac{\dd(fc)}{\dd r} = hc ~, \quad\text{and}\quad  ab = fc ~.
\end{split} \label{hk_coeff1} \end{align}
It is a straightforward computation to check that \eqref{hk_coeff0} does solve \eqref{hk_coeff1}.  One can consult \cite{ref_CGLP1}*{section 4} for the discussion on solving \eqref{hk_coeff1}.

For the connection and curvature computation in the following subsubsections, it is more transparent to simplify the expressions by using the hyper-K\"ahler equation \eqref{hk_coeff1} than by plugging in the explicit solution \eqref{hk_coeff0}.  Note that \eqref{hk_coeff1} implies that $b^2 - a^2$ is a constant, which is $1$ for the explicit solution \eqref{hk_coeff0}.

\subsubsection{Connection $1$-forms}
To compute the connection $1$-forms of the metric, it is easier to choose a complex structure and take a unitary frame.  With respect to the complex structure corresponding to the K\"ahler form \eqref{hk_kform1}, we have the unitary coframe:
\begin{align} \label{hk_frame1}
\xi^1 &= c\,\sm^1 ~,  &\xi^j &= b\,\sm^j ~,  &\xi^{n+1} &= h\,\dd r + \frac{\ii f}{r}\im\sm^{n+1} ~,  &\xi^{n+j} &= \frac{a}{r}\,\sm^{n+j}
\end{align}
for $j\in\{2,\ldots,n\}$.  Denote by $\xi_\nu^\mu$ the Hermitian connection $1$-forms.  They can be found by the structure equation \eqref{hk_conn1}:
\begin{align} \begin{split}
\xi_1^1 &= \ii\left(\frac{2f}{c^2} - \frac{1}{f}\right)\im\xi^{n+1} = -\xi_{n+1}^{n+1} ~, \\
\xi_{n+1}^1 &= \frac{2f}{c^2}\,\xi^1 ~, \qquad\qquad \xi_{n+k}^j = 0 ~,
\end{split} \begin{split}
\xi^j_{n+1} &= \frac{f}{b^2}\,\xi^j = \xi^1_{n+j} ~, \\
\xi_{n+1}^{n+j} &= \frac{f}{a^2}\,\xi^{n+j} = -\xi_j^1 ~.
\end{split} \label{hk_conn2} \end{align}
The hyper-K\"ahler equation \eqref{hk_coeff1} is used to simplify the above expressions.  The components $\xi_j^k$ and $\xi_{n+j}^{n+k}$ are related to the connection of the Fubini--Study metric as follows:
\begin{align} \label{hk_conn3}
\xi_j^k &= -\xi_{n+k}^{n+j} = T^k_\mu\,\ta^\mu_\nu\,\ol{T^j_\nu} - (\dd T_\mu^k)\,\ol{T_\mu^j} + \ii\frac{f}{b^2}\delta_j^k\,\im\xi^{n+1} ~.
\end{align}

\subsubsection{Riemann curvature tensor} \label{sc_Calabi_curv}
In terms of the unitary coframe \eqref{hk_frame1}, the curvatures are as follows:
\begin{align*}
\FR_1^1 &= \frac{2}{c^6} \left( \xi^1\w\ol{\xi^1} - \xi^{n+1}\w\ol{\xi^{n+1}} \right) + \frac{1}{c^4} \left( \xi^j\w\ol{\xi^j} - \xi^{n+j}\w\ol{\xi^{n+j}} \right) = -\FR_{n+1}^{n+1} ~, \\
\FR_1^j &= \frac{1}{c^4} \left( \xi^j\w\ol{\xi^1} - \xi^{n+1}\w\ol{\xi^{n+j}} \right) = -\FR_{n+j}^{n+1} ~, \\
\FR_1^{n+j} &= -\frac{1}{c^4} \left( \xi^{n+j}\w\ol{\xi^1} + \xi^{n+1}\w\ol{\xi^j} \right) = \FR_j^{n+1} ~, \\
\FR_1^{n+1} &= -\frac{2}{c^6}\,\xi^{n+1}\w\ol{\xi^1} ~, \\
\FR_j^{n+k} &= -\frac{1}{c^2} \left( \xi^{n+j}\w\ol{\xi^k} + \xi^{n+k}\w\ol{\xi^j} \right) ~, \\
\FR_j^k &= \frac{1}{c^2} \left( \xi^k\w\ol{\xi^j} - \xi^{n+j}\w\ol{\xi^{n+k}} \right) \\
&\quad + \delta_j^k \left( \frac{1}{c^4} \left( \xi^1\w\ol{\xi^1} - \xi^{n+1}\w\ol{\xi^{n+1}} \right) + \frac{1}{c^2} \left( \xi^i\w\ol{\xi^i} - \xi^{n+i}\w\ol{\xi^{n+i}} \right) \right) = -\FR_{n+k}^{n+j} ~.
\end{align*}
The equality between different curvature components is a consequence of the hyper-K\"ahler geometry.

\section{Geometry of the Bryant--Salamon metrics}\label{sc_BS}
In \cite{ref_BS}, Bryant and Salamon constructed complete manifolds with special holonomy.  They constructed three examples with holonomy $\gt$, and one example with holonomy $\sps$, each of which is  the total space of a vector bundle.

\subsection{Bryant--Salamon manifolds}
This subsection is a brief review on the construction of Bryant and Salamon.  We first review the general framework of the metric construction on a vector bundle, and then specialize in their examples.

\subsubsection{Bundle construction}\label{sc_bc}
Let $(B^n,\ul{g})$ be a Riemannian manifold, and  $E\stackrel{\pi}{\to} B$ be a rank $m$ vector bundle.  Suppose that $E$ carries a bundle metric and a metric connection.  Then, these data naturally induce a Riemannian metric on the total space $E$.  The construction goes as follows.  With the connection, the tangent space of $E$ decomposes into vertical and horizontal subspaces.  These two subspaces are defined to be orthogonal to each other.  The metric on the vertical subspace is given by the original bundle metric; the metric on the horizontal subspace is the pull-back of the metric $\ul{g}$.

This metric can be seen explicitly in terms of the moving frame.  Take a local orthonormal coframe $\{\ul{\om}^j\}_{j=1}^n$ on an open subset $U\subset B$, and a local orthonormal basis of sections $\{\fs_\nu\}_{\nu=1}^m$ that trivializes $E|_U$.  Denote by $\nabla_A$ a metric connection for $E$.  Let $A^\mu_\nu$ be the the connection $1$-forms of $\nabla_A$ with respect to ${\fs_\nu}$, namely, $\nabla_A \fs_\nu = \sum_{\mu=1}^m A^\mu_\nu\,\fs_\mu$.  Thus, $[A_\nu^\mu]$ is an $\mathfrak{o}(m)$-valued $1$-form on $U$.  Let $\{y^\mu\}_{\mu=1}^m$ be the coordinate for the fibers of $E|_U$ induced by $\{\fs_\mu\}_{\mu=1}^m$.  The Riemannian metric $g_b$ on $E$ induced by the bundle metric is
\begin{align}
g_b=\sum_{j=1}^n (\ul{\om}^j)^2 + \sum_{\mu=1}^m\big(\dd y^\mu + A_\nu^\mu\,y^\nu\big)^2 ~.
\label{bundle_metric}\end{align}
Comparing \eqref{bundle_metric} with \eqref{metric0} and \eqref{hk_metric0}, the connection matrices
in section \ref{sc_Stenzel} and \ref{sc_Calabi} are defined with respect to the Levi-Civita connection on the tangent bundle, so the induced connection matrices on the cotangent bundle are the negative transposes in  \eqref{metric0} and \eqref{hk_metric0}, while the connection matrices here are on an arbitrary vector bundle.

In our discussion on the bundle construction, the indices $i,j,k$ are assumed to belong to $\{1,2,\ldots,n=\dim B\}$, and the indices $\mu,\nu,\gm,\sm$ are assumed to belong to $\{1,2,\ldots,m = \rank E\}$.

Here is a relation that will be used later.  The exterior derivative of $\dd y^\mu + A_\nu^\mu\,y^\nu$ can be written as $F_\nu^\mu\,y^\nu - A_\nu^\mu\w(\dd y^\nu + A_\gm^\nu\,y^\gm)$, where
\begin{align}
F_\nu^\mu = \dd A_\nu^\mu + A^\mu_\gm\w A^\gm_\nu = \oh F^\mu_{\nu\,ij}\,\ul{\om}^i\w\ul{\om}^j
\label{bd_cuv0} \end{align}
is the curvature of $\nabla_A$.

\subsubsection{Rescaling the metric}
Let $s = {\sum_{\mu}(y^\mu)^2}$ be the distance square to the zero section with respect to the Riemannian metric $g_b$ in \eqref{bundle_metric}.  For any two smooth, positive functions $\alpha(s),\beta(s)$ defined for $s\geq 0$,
\begin{align}
g_{\alpha,\beta} &= \sum_{j} \big(\alpha\,\ul{\om}^j\big)^2 + \sum_{\mu}\big(\beta\,(\dd y^\mu + A_\nu^\mu\,y^\nu)\big)^2
\label{bd_metric0} \end{align}
also defines a Riemannian metric on $E$.  Let
\begin{align}
\om^j = \alpha\,\ul{\om}^j   \qquad\text{and}\qquad
\om^{n+\mu} = \beta\,(\dd y^\mu + A_\nu^\mu\,y^\nu) ~.
\label{bd_frame0} \end{align}
It follows that
\begin{align}
\dd s = \frac{2}{\bt}y^\mu\,\om^{n+\mu} ~.
\end{align}
Note that $\{\om^j\}_{j=1\cdots n} \cup \{\om^{n+\mu}\}_{\mu=1\cdots m}$  form an orthonormal coframe of the metric \eqref{bd_metric0}.  Their exterior derivatives read
\begin{align} \begin{split}
\dd{\om}^j &= -\ul{\om}_i^j\w{\om}^i - \frac{2\alpha'}{\alpha\beta} y^\mu\,{\om}^j\w{\om}^{n+\mu} ~, \\
\dd{\om}^{n+\mu} &= \beta\,F^\mu_\nu\,y^\nu - A^\mu_\nu\w{\om}^{n+\nu} - \frac{2\beta'}{\beta^2}y^\nu\,{\om}^{n+\mu}\w{\om}^{n+\nu}
\end{split} \label{bd_dom} \end{align}
where $\ul{\om}_i^j$ is the connection $1$-form of the Levi-Civita connection of $(B,\ul{g})$.  By a direct computation, we find that the connection $1$-forms of the Levi-Civita connection of $(E,g_{\af,\bt})$ are
\begin{align}
{\om}_i^j &= \ul{\om}_i^j + \frac{\beta}{2\alpha^2} F^\mu_{\nu\,ij}\,y^\nu\,{\om}^{n+\mu} ~, \label{bd_conn0} \\
{\om}_i^{n+\mu} &= \frac{\beta}{2\alpha^2} F^\mu_{\nu\,ij}\,y^\nu\,{\om}^j - \frac{2\alpha'}{\alpha\beta}y^\mu\,{\om}^i ~, \label{bd_conn1} \\
{\om}_{n+\nu}^{n+\mu} &= A_\nu^\mu + \frac{2\beta'}{\beta^2}(y^\nu\,{\om}^{n+\mu} - y^\mu\,{\om}^{n+\nu}) ~. \label{bd_conn2}
\end{align}

The mixed component $\om^{n+\mu}_i$ is different from the other two components; it involves only the curvature but not the connection.  That is to say, $\om^{n+\mu}_i$ is a tensor:
\begin{align*}
\om^i\ot\om^{n+\mu}_i\ot\fs_\mu &= \bt\, F(\cdot\,,\cdot)(y^\mu\,\fs_\mu) - \frac{2\af \af'}{\bt}\,\ul{g}(\cdot\,,\cdot)(y^\mu\,\fs_\mu) : \CH\times\CH\to\CV
\end{align*}
where $\CH\cong\pi^*TB$ is the horizontal subspace, and $\CV\equiv\pi^*E$ is the vertical subspace over the total space of $E$.

\subsubsection{Examples of Bryant and Salamon}
For the examples of Bryant and Salamon, the base manifold $B$ is either a sphere or a complex projective space.  The metric $\ul{g}$ is the standard metric.  The vector bundle $E$ is constructed from the tangent bundle or the spinor bundle, and the metric connection is induced from the Levi-Civita connection.  In what follows, $\kp$ is the sectional curvature of the round metric when the base is the sphere, and is half the holomorphic sectional curvature of the Fubini-Study metric when the base is $\BCP^2$.

The first example \cite{ref_BS}*{p.840} is the spinor bundle over the $3$-sphere, $\BS(S^3)$.  The $\gt$ metric has
\begin{align}
\af(s) = (3\kp)^\oh (1 + s)^{\frac{1}{3}}   \qquad\text{and}\qquad
\bt(s) = 2 (1 + s)^{-\frac{1}{6}} ~.
\label{bd_metric1} \end{align}

The next two examples \cite{ref_BS}*{p.844} are the bundle of anti-self-dual $2$-forms over the $4$-sphere and the $2$-dimensional complex projective space, $\Lambda^2_-(S^4)$ and $\Lambda^2_-(\BCP^2)$.  They have
\begin{align}
\af(s) = (2\kp)^{\oh} (1 + s)^{\frac{1}{4}}   \qquad\text{and}\qquad
\bt(s) = (1 + s)^{-\frac{1}{4}} ~.
\label{bd_metric2} \end{align}

The last example \cite{ref_BS}*{p.847} is the spinor bundle of negative chirality over the $4$-sphere, $\BS_-(S^4)$.  The $\sps$ metric has
\begin{align}
\af(s) = (5\kp)^\oh (1 + s)^{\frac{3}{10}}   \qquad\text{and}\qquad
\bt(s) = 2 (1 + s)^{-\frac{1}{5}} ~.
\label{bd_metric3} \end{align}

We will refer $\BS(S^3)$, $\Lambda^2_-(S^4)$, $\Lambda^2_-(\BCP^2)$ and $\BS_-(S^4)$ with the Ricci flat metric as the \emph{Bryant--Salamon manifolds}.

The above coefficient functions are derived from the special holonomy equation, which is a first order elliptic system.  According to their paper, the special holonomy equation reduces to the following equation:
\begin{align}
\af' = \kp_1\,\frac{\bt^2}{\af} \qquad\text{and}\qquad  \bt' = -\kp_2\,\frac{\bt^3}{\af^2}
\label{bd_eq0} \end{align}
for some \emph{positive} constants $\kp_1$ and $\kp_2$.  For $\BS(S^3)$, $\kp_1 = \kp/4$ and $\kp_2 = \kp/8$.  For $\Lambda^2_-(S^4)$ and $\Lambda^2_-(\BCP^2)$, $\kp_1 = \kp_2 = \kp/2$.  For $\BS_-(S^4)$, $\kp_1 = 3\kp/8$ and $\kp_2 = \kp/4$.

One can easily construct some functional equations from \eqref{bd_eq0}.  Here are two relations that will be used later:
\begin{align}
\left(\frac{\af^2}{\bt^2}\right)' = 2(\kp_1 + \kp_2)  \quad&\Rightarrow\quad \frac{\af^2}{\bt^2} = \left(\frac{\af(0)}{\bt(0)}\right)^2 + 2(\kp_1 + \kp_2)s ~;  \label{bd_relation1} \\
\left(\frac{\bt}{\af^2}\right)' &= - (2\kp_1 + \kp_2)\,\frac{\bt^3}{\af^4} ~.
\label{bd_relation2} \end{align}


\section{The uniqueness of the zero section}\label{sc_rigidity}
The following lemma is about the rigidity of a compact minimal submanifold.  The authors believe it must be known to experts in the field. Due to the lack of a precise reference, the proof is  included for completeness.

\begin{lem}\label{lem_key}
Let $(M,{g})$ be a Riemannian manifold.  Suppose that $\psi$ is a smooth function on $M$ whose Hessian is non-negative definite.
Then, any {compact}, {minimal} submanifold of $M$ must be contained in the set where $\Hess_M\psi$ degenerates.  In addition, $\psi$ takes constant value on the submanifold if it is connected.
\end{lem}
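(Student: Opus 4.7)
The plan is to exploit the standard formula relating the intrinsic Laplacian of the restriction $\psi|_\Sigma$ to the ambient Hessian of $\psi$ and the mean curvature of $\Sigma$, and then to apply the maximum principle on the compact submanifold.

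Concretely, for any smooth function $\psi$ on $M$ and any isometrically immersed submanifold $\Sigma\subset M$, a short computation using an orthonormal frame $\{e_\alpha\}$ of $T\Sigma$ and the Gauss formula gives
\begin{align*}
\Delta_\Sigma(\psi|_\Sigma) \;=\; \sum_\alpha \Hess_M\psi(e_\alpha,e_\alpha) \;+\; \ip{H}{\nabla^M\psi} ~,
\end{align*}
where $H$ denotes the mean curvature vector of $\Sigma$ in $M$. First I would recall this identity (it is a one-line computation from $e_\alpha(e_\alpha\psi) = (\nabla^M_{e_\alpha}e_\alpha)\psi + \Hess_M\psi(e_\alpha,e_\alpha)$ and splitting $\nabla^M_{e_\alpha}e_\alpha$ into its tangential and normal parts).

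Next, since $\Sigma$ is minimal, $H\equiv 0$, and the hypothesis $\Hess_M\psi\geq 0$ forces the right-hand side to be non-negative. Therefore $\psi|_\Sigma$ is a subharmonic function on the compact submanifold $\Sigma$. If $\Sigma$ is connected, the maximum principle yields that $\psi|_\Sigma$ is constant. This already establishes the last assertion of the lemma.

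For the first assertion, constancy of $\psi|_\Sigma$ gives $\Delta_\Sigma(\psi|_\Sigma)\equiv 0$, hence $\sum_\alpha \Hess_M\psi(e_\alpha,e_\alpha) \equiv 0$ along $\Sigma$. Because $\Hess_M\psi$ is non-negative definite, each summand is non-negative, so every eigenvalue of the restriction $\Hess_M\psi\big|_{T_p\Sigma\times T_p\Sigma}$ vanishes at every $p\in\Sigma$; equivalently, $T_p\Sigma\subset \ker \Hess_M\psi$ at every such point, which is the precise sense in which $\Hess_M\psi$ degenerates on $\Sigma$. For a possibly disconnected $\Sigma$, the same argument applies component by component.

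There is not really a hard step; the only subtlety is to state precisely what "$\Hess_M\psi$ degenerates" means and to justify passing from vanishing trace to vanishing restriction, which uses non-negativity in an essential way. In the applications in the later sections, the function $\psi$ will be (a multiple of) the distance squared to the zero section, and the work will lie in verifying that its Hessian is non-negative with kernel exactly the tangent directions of the zero section.
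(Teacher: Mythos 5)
Your proof is correct and follows essentially the same route as the paper: both trace the ambient Hessian over $T\Sigma$, identify it with $\Delta^\Sigma\psi - H(\psi)$, use minimality and $\Hess_M\psi\geq0$ to conclude $\psi|_\Sigma$ is subharmonic, and finish with the maximum principle on the compact submanifold. Your extra remark that vanishing trace plus non-negativity forces $T_p\Sigma\subset\ker\Hess_M\psi$ is a correct and slightly more explicit rendering of the paper's degeneration claim.
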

\begin{proof}
Let $\Sm\subset M$ be a compact submanifold, and $p$ be a point in $\Sm$.  Choose an orthonormal frame $\{e_j\}$ for $T\Sm$ on some neighborhood of $p$ in $\Sm$.  Consider the trace of the Hessian of $\psi$ on $T\Sm$:
\begin{align*}
0 \leq  \tr_\Sigma \Hess \psi=\sum_j\Hess \psi(e_j,e_j) &= \sum_j \left( e_j(e_j(\psi)) - ({\nabla}_{e_j}e_j)(\psi) \right) \\
&= \Delta^\Sm\psi - H(\psi)
\end{align*}
where ${\nabla}$ is the covariant derivative of $(M,{g})$, $\Delta^\Sm$ is the Laplace--Beltrami operator of the induced metric on $\Sm$, and $H = \sum_{j}(\nabla_{e_j}e_j)^\perp$ is the mean curvature vector of $\Sm$.  When $\Sm$ is minimal, it implies that $\Delta^\Sm\psi\geq0$.  The lemma follows from the compactness of $\Sm$ and the maximum principle.
\end{proof}

\subsection{The Stenzel metric case}
In this subsection, we apply Lemma \ref{lem_key} to show that the zero section is the only compact, special Lagrangian submanifold in $T^*S^n$.

\begin{thm}\label{unique_Stenzel}
When $n>1$, any compact, minimal submanifold in $T^*S^n$ with the Stenzel metric must belong to the zero section.
\end{thm}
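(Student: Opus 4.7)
The strategy is to apply Lemma~\ref{lem_key} with the test function $\psi = \tfrac12\rho^2$, where $\rho$ is the Stenzel geodesic distance to the zero section introduced in the paragraph preceding \eqref{frame1}.  It suffices to establish: (i) $\psi$ extends smoothly across the zero section to a function on all of $T^*S^n$, and (ii) $\Hess_M\psi$ is positive semi-definite on $T^*S^n$ and positive definite off the zero section.  Granting (i) and (ii), Lemma~\ref{lem_key} forces any compact minimal submanifold $\Sm$ to lie in the degeneracy locus of $\Hess_M\psi$, which is then precisely the zero section.

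For (i), the ODE \eqref{Ricci0} shows that $(h')^n$ equals $r^n$ times a power series in $r^2$, so $h'(r) = r\,g(r^2)$ with $g$ smooth and positive.  Hence $c(r) = \tfrac12\sqrt{h''(r)}$ is smooth in $r^2$, and $\rho = \int_0^r c(u)\,\dd u = r\,G(r^2)$ with $G(0)=1$.  Therefore $\psi = \tfrac12\,r^2 G(r^2)^2$ is a smooth function of $r^2 = \sum_\mu (y_\mu)^2$, which is itself smooth on $T^*S^n$.

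For (ii), since $\dd\rho = \om^{n+1}$ one has $\Hess_M\psi = \om^{n+1}\otimes\om^{n+1} + \rho\,\Hess_M(\rho)$.  The relation $\bar{e}_\nu(\rho) = \delta_{\nu,n+1}$ combined with the definition of the Levi-Civita connection gives $\Hess_M(\rho)(\bar{e}_\mu,\bar{e}_\nu) = -\om_\nu^{n+1}(\bar{e}_\mu)$.  Substituting the connection entries \eqref{conn0} and applying the Ricci-flat identities \eqref{Ricci1}, this tensor is diagonal in the frame dual to \eqref{frame1} with eigenvalues $-(n-1)C$ on $\bar{e}_1$, $-A$ on each $\bar{e}_j$ $(j\geq 2)$, $0$ on $\bar{e}_{n+1}$, and $-B$ on each $\bar{e}_{n+j}$ $(j\geq 2)$.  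Hence $\Hess_M\psi$ is diagonal in the same frame with eigenvalues
\begin{align*}
-\rho(n-1)C,\qquad -\rho A,\qquad 1,\qquad -\rho B.
\end{align*}
By Lemma~\ref{lem_est1}, $A$, $B$, and $C$ are strictly negative whenever $\rho>0$, so each eigenvalue is strictly positive off the zero section.  By continuity $\Hess_M\psi \geq 0$ throughout $T^*S^n$, and its degeneracy locus is exactly the zero section.

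Lemma~\ref{lem_key} then completes the proof: any compact minimal $\Sm$ must satisfy $T_p\Sm \subset \ker\Hess_M\psi|_p$ at every $p\in\Sm$, forcing $\Sm$ into the zero section.  The main subtlety is selecting a test function whose Hessian inherits uniform positivity from the structural relations of the Stenzel metric; once $\psi = \tfrac12\rho^2$ is chosen, the sign information in Lemma~\ref{lem_est1} suffices.
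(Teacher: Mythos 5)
Your proof is correct and follows essentially the same route as the paper: the same test function $\psi=\rho^2$ (up to the harmless factor $\tfrac12$), the same Hessian computation via \eqref{conn0} and \eqref{Ricci1} yielding the diagonal form \eqref{Hess0}, the sign information of Lemma~\ref{lem_est1}, and Lemma~\ref{lem_key}. Your explicit verification that $\psi$ extends smoothly across the zero section is a point the paper leaves implicit, but it does not change the argument.
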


\begin{proof}
Consider the smooth function $\psi = \rho^2$, which is the square of the distance to the zero section (with respect to the Stenzel metric).  Due to Lemma \ref{lem_key}, it suffices to show that the Hessian of $\psi$ is positive definite outside the zero section. Since $\Hess(\rho)=\nabla \dd\rho$ and $\dd \rho=\om^{n+1}$, we compute
\begin{align*}
\Hess (\psi) &= 2\,\om^{n+1}\ot \om^{n+1} + 2\rho \nabla \om^{n+1} ~.
\end{align*}
By \eqref{conn0} and \eqref{Ricci1}, we obtain
\begin{align} \label{Hess0}
\Hess (\psi) &= 2\,\om^{n+1}\ot\om^{n+1} - 2\rho (n-1)C\,\om^1\ot\om^1 -2\rho A\sum_{j=2}^n\om^j\ot\om^j - 2\rho B\sum_{j=2}^n\om^{n+j}\ot\om^{n+j} ~.
\end{align}
According to Lemma \ref{lem_est1}, $\Hess(\psi)$ is positive definite when $r>0$ (equivalently, when $\rho>0$).  This finishes the proof of the theorem.
\end{proof}

\subsection{The Calabi metric case}
In this subsection, we prove that the zero section is the only compact, minimal submanifold in $T^*\BCP^n$.

\begin{thm}\label{unique_Calabi}
Any compact, minimal submanifold in $T^*\BCP^n$ with the Calabi metric must belong to the zero section.
\end{thm}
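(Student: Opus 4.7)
The plan is to mirror the proof of Theorem \ref{unique_Stenzel}. By Lemma \ref{lem_key}, it suffices to find a smooth function $\psi$ on $T^*\BCP^n$ whose Hessian with respect to the Calabi metric is positive semi-definite globally and strictly positive definite off the zero section; since the zero section is then precisely the degenerate locus of $\Hess\psi$, any compact minimal submanifold must lie in it. The natural candidate is $\psi = \rho^2$, where $\rho(r) = \int_0^r h(u)\,\dd u$ is the Riemannian distance to the zero section (well-defined because $h\,\dd r = \re\xi^{n+1}$ is a unit $1$-form in the radial direction by \eqref{hk_frame1}). Since $\rho(r) = r + O(r^3)$ near $r=0$ and $r^2 = \sum_\mu |z_\mu|^2$ is globally smooth on $T^*\BCP^n$, the function $\rho^2$ is globally smooth.

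Setting $\eta^{2n+1} := \re\xi^{n+1}$ and $\eta^{2n+2} := \im\xi^{n+1}$, so that $\dd\psi = 2\rho\,\eta^{2n+1}$, I would compute
\[
\Hess\psi \;=\; 2\,\eta^{2n+1}\otimes\eta^{2n+1} \;-\; 2\rho\,\re\Big({\textstyle\sum_\nu}\,\xi^{n+1}_\nu\otimes\xi^\nu\Big) .
\]
The anti-Hermitian identity $\xi^{n+1}_\nu = -\overline{\xi^\nu_{n+1}}$ applied to \eqref{hk_conn2} yields
\[
\xi^{n+1}_1 = -\tfrac{2f}{c^2}\bar\xi^1,\ \ \xi^{n+1}_j = -\tfrac{f}{b^2}\bar\xi^j,\ \ \xi^{n+1}_{n+j} = -\tfrac{f}{a^2}\bar\xi^{n+j},\ \ \xi^{n+1}_{n+1} = -\ii\big(\tfrac{2f}{c^2}-\tfrac{1}{f}\big)\eta^{2n+2}.
\]
Expanding $\xi^\nu = \eta^{2\nu-1} + \ii\,\eta^{2\nu}$ and taking real parts, $\Hess\psi$ should be diagonal in the real orthonormal coframe associated to \eqref{hk_frame1}, with coefficients $2$ on $\eta^{2n+1}\otimes\eta^{2n+1}$, $2\rho\big(1/f - 2f/c^2\big)$ on $\eta^{2n+2}\otimes\eta^{2n+2}$, $2\rho(2f/c^2)$ on each of the two $\xi^1$-directions, $2\rho(f/b^2)$ on each $\xi^j$-direction, and $2\rho(f/a^2)$ on each $\xi^{n+j}$-direction.

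The remaining step is to verify positivity of each diagonal coefficient for $r > 0$. With the explicit formulas \eqref{hk_coeff0}, the first three are manifest from $a, b, c, f > 0$, while $1/f - 2f/c^2 > 0$ reduces to the elementary inequality $2\cosh^2(2r) > \sinh^2(2r)$, which is immediate from $\cosh^2 - \sinh^2 = 1$. The main obstacle is the complex-to-real bookkeeping: unlike the Stenzel coframe, whose radial form $c\,\dd r$ lives in a purely real splitting, the Calabi coframe packages the radial direction together with an $S^1$-angular fiber direction into the single complex form $\xi^{n+1}$, so the purely imaginary connection form $\xi^{n+1}_{n+1}$ must be handled carefully to extract the (real) $\eta^{2n+2}\otimes\eta^{2n+2}$-coefficient, and one must check that no off-diagonal terms survive after taking real parts. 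As a sanity check, $2\rho(1/f - 2f/c^2)\to 2$ and $2\rho(f/a^2)\to 2$ as $r\to 0^+$, reproducing the Euclidean Hessian of $\sum_\mu|z_\mu|^2$ restricted to the fiber directions.
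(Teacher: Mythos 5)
Your proposal is correct and follows essentially the same route as the paper: reduce to Lemma \ref{lem_key} via $\psi=\rho^2$, compute $\Hess\psi = 2(\re\xi^{n+1})\otimes(\re\xi^{n+1}) + 2\rho\,\re(\nabla\xi^{n+1})$ using the connection forms \eqref{hk_conn2}, and check that each diagonal coefficient is positive for $r>0$ (your identification of $1/f-2f/c^2>0$ with $2\cosh^2(2r)>\sinh^2(2r)$ correctly supplies the one inequality the paper leaves as "not hard to see"). The diagonal coefficients you list agree exactly with the paper's expression for $\re(\nabla\xi^{n+1})$, so the remaining bookkeeping you flag is routine and works out as you predict.
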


\begin{proof}
The function $\rho = \int_0^r\sqrt{\cosh(2u)}\dd u$ is the distance function to the zero section with respect to the Calabi metric and $\dd \rho=\re \xi^{n+1}$ by \eqref{hk_coeff0} and \eqref{hk_frame1}. Consider the smooth function $\psi = \rho^2$ and compute as in the last theorem,
\begin{align*}
\Hess (\psi) &= 2\,(\re \xi^{n+1})\ot (\re \xi^{n+1}) + 2\rho\,\re (\nabla \xi^{n+1}) ~.
\end{align*}
By \eqref{hk_conn2},
\begin{align*}
\re (\nabla \xi^{n+1}) &= \frac{2f}{c^2}\,|\xi^1|^2 + \frac{f}{b^2}\sum_{j=2}^n|\xi^j|^2 + (\frac{1}{f} -\frac{2f}{c^2})\,(\im\xi^{n+1})\ot(\im \xi^{n+1}) + \frac{f}{a^2}\sum_{j=2}^n|\xi^{n+j}|^2 ~.
\end{align*}
According \eqref{hk_coeff0}, it is not hard to see that $\Hess(\psi)$ is positive definite when $r>0$, and the theorem follows from Lemma \ref{lem_key}.
\end{proof}


\subsection{The Bryant--Salamon metric case }
In this subsection, we examine the uniqueness of the zero section as a minimal submanifold.  

\begin{lem}\label{lem_bd0}
Let $(B^n,\ul{g})$ be a Riemannian manifold.  Let $E\to B$ be a rank $m$ vector bundle with a bundle metric and a metric connection.  Denote by $s$ the square of the distance to the zero section with respect to the metric $g_b$ \eqref{bundle_metric} on $E$.
For any two smooth positive functions $\alpha(s)$ and $\beta(s)$, endow $E$ the Riemannian metric $g_{\alpha,\beta}$ defined by \eqref{bd_metric0}.  Then, the Hessian of $s$ with respect to $g_{\alpha, \beta}$ is positive definite outside the zero section if and only if $\alpha'>0$ and $\beta>2s|\beta'|$ for $s>0$.
\end{lem}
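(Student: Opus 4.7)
The plan is to compute $\Hess s$ directly in the orthonormal coframe $\{\om^j,\om^{n+\mu}\}$ of \eqref{bd_frame0} and read off positive definiteness from its block structure. Starting from $\dd s = (2/\bt)\,y^\mu\,\om^{n+\mu}$ and applying $\nabla$ with the Levi-Civita connection forms \eqref{bd_conn0}--\eqref{bd_conn2}, I expect $\Hess s$ to split into a horizontal-horizontal block, a vanishing mixed block, and a vertical-vertical block, after which the result becomes a routine positivity check.

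For the horizontal-horizontal block, since $e_i(s)=0$, one has $\Hess s(e_i,e_k) = -s_{n+\mu}\,\om_i^{n+\mu}(e_k)$ with $s_{n+\mu}=2y^\mu/\bt$. Plugging in \eqref{bd_conn1}, the curvature piece $\frac{\bt}{2\af^2}F^\mu_{\nu ij}\,y^\nu\,\om^j$ gets contracted with $y^\mu$ and vanishes by the skew-symmetry $F^\mu_\nu = -F^\nu_\mu$ (a consequence of $\nabla_A$ being a metric connection). Only the $-\frac{2\af'}{\af\bt}\,y^\mu\,\om^i$ term survives, producing
\[
\Hess s(e_i,e_k) = \frac{4\af' s}{\af\bt^2}\,\dt_{ik}.
\]
The mixed block $\Hess s(e_i,e_{n+\mu})$ vanishes because $\om_i^{n+\nu}$ is horizontal and hence annihilates $e_{n+\mu}$. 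For the vertical-vertical block I would use $e_{n+\nu} = (1/\bt)\,\partial_{y^\nu}$, so differentiating $s_{n+\mu}=2y^\mu/\bt$ produces an identity contribution together with a $\bt'$ contribution through $e_{n+\nu}(\bt) = 2\bt' y^\nu/\bt$. Combining with $\om_{n+\mu}^{n+\gm}(e_{n+\nu})$ from \eqref{bd_conn2} (the $A_\mu^\gm$ piece is horizontal and drops out), the sum simplifies to
\[
\Hess s(e_{n+\nu},e_{n+\mu}) = \frac{2}{\bt^3}\bigl[(\bt + 2\bt' s)\,\dt_{\mu\nu} - 4\bt'\,y^\mu y^\nu\bigr].
\]

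With the Hessian block diagonal, positive definiteness for $s>0$ reduces to positivity of each block. The horizontal block is a positive multiple of the identity precisely when $\af'>0$. The $m\times m$ vertical block equals $(2/\bt^3)$ times $(\bt+2\bt' s)\,I_m - 4\bt'\,yy^T$; since $|y|^2=s$, this symmetric matrix has eigenvalue $\bt-2\bt' s$ along $y$ and eigenvalue $\bt+2\bt' s$ with multiplicity $m-1$ on $y^\perp$. Both are positive precisely when $\bt > 2s|\bt'|$, which combined with the horizontal condition yields the stated criterion.

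The main bookkeeping hurdle is ensuring that the curvature-induced cross term in the horizontal block dies by skew-symmetry of $F$, and that the two sources of $\bt'$-dependence in the vertical block (differentiating $1/\bt$ and the correction in \eqref{bd_conn2}) combine symmetrically to produce the clean rank-one perturbation $y y^T$ above; once both are in place the eigenvalue analysis is elementary linear algebra.
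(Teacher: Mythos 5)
Your computation is correct and follows essentially the same route as the paper: compute $\Hess s$ in the coframe \eqref{bd_frame0} using \eqref{bd_conn0}--\eqref{bd_conn2}, observe the block-diagonal structure (with the curvature term in the horizontal block killed by skew-symmetry of $F$), and diagonalize the rank-one perturbation in the vertical block; your final expressions agree with \eqref{bd_Hess0} and \eqref{bd_Hess1} specialized to $\psi=s$. The only cosmetic difference is that the paper carries out the computation for a general radial function $\psi(s)$ before substituting $\psi=s$.
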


\begin{proof}
Suppose that $\psi$ is a smooth function on $E$ depending only on $s = \sum_{\mu}(y^\mu)^2$.  Its exterior derivative is
\begin{align}
\dd\psi &= \psi'\,\dd s = 2{\psi'} y^\mu\,\dd y^\mu = \frac{2\psi'}{\beta} y^\mu\,{\om}^{n+\mu} ~.
\label{bd_dr} \end{align}
Let $\{\bar{e}_j\}_{j=1}^n\cup\{\bar{e}_{n+\mu}\}_{\mu=1}^m$ be the frame dual to the coframe \eqref{bd_frame0}.  Since $\bar{e}_j(\psi) \equiv 0$, the Hessian of $\psi$ along $(\bar{e}_i,\bar{e}_j)$ is
\begin{align}
\Hess(\psi)(\bar{e}_i,\bar{e}_j) &= \bar{e}_i(\bar{e}_j(\psi)) - ({\nabla}_{\bar{e}_i}\bar{e}_j)(\psi)  \notag \\
&= \frac{4\alpha'\psi'}{\alpha\beta^2}s\,\delta_i^j + \frac{\psi'}{\alpha^2} y^\mu\,y^\nu\, F^\mu_{\nu\;ij} \notag \\
&= \frac{4\alpha'\psi'}{\alpha\beta^2}s\,\delta_i^j \label{bd_Hess0}
\end{align}
where the last equality uses the fact that $[F_\nu^\mu]$ is skew-symmetric in $\mu$ and $\nu$.  It is not hard to see that the Hessian of $\psi$ along $(\bar{e}_{n+\mu},\bar{e}_j)$ vanishes.  Along $(\bar{e}_{n+\mu},\bar{e}_{n+\nu})$,
\begin{align}
\Hess(\psi)(\bar{e}_{n+\mu},\bar{e}_{n+\nu}) &= \bar{e}_{n+\mu}(\bar{e}_{n+\nu}(\psi)) - ({\nabla}_{\bar{e}_{n+\mu}}\bar{e}_{n+\nu})(\psi) \notag \\
&= \left( \frac{2\psi'}{\beta^2}\,\delta_\mu^\nu + y^\mu\,y^\nu\,\frac{2}{\beta}\left(\frac{2\psi'}{\beta}\right)' \right) - \left(y^\mu\,y^\nu\frac{4\beta'\psi'}{\beta^3} - \frac{4\beta'\psi'}{\beta^3}s\,\delta_\mu^\nu\right) \notag \\
&= \left( \frac{2}{\beta^2} + \frac{4\beta'}{\beta^3}s \right)\psi'\,\delta_\mu^\nu + \left(\frac{4\psi'}{\beta^2}\right)'\,y^\mu\,y^\nu ~. \label{bd_Hess1}
\end{align}
Substituting $\psi = s$, the lemma follows from \eqref{bd_Hess0}, \eqref{bd_Hess1} and the fact that the eigenvalues of the matrix $[y^\mu\,y^\nu]$ are $s$ and $0$, where $0$ has geometric multiplicity $m-1$.
\end{proof}

Applying this lemma to the Bryant--Salamon manifolds leads to the following theorem.
\begin{thm}\label{unique_BS}
Any compact minimal submanifold of the Bryant--Salamon manifolds must be contained in the zero section.
\end{thm}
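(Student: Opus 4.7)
The plan is to apply Lemma \ref{lem_key} to the function $\psi = s$, where $s = \sum_\mu (y^\mu)^2$ is the squared distance to the zero section with respect to $g_b$. By Lemma \ref{lem_bd0}, the Hessian of $s$ (computed with respect to the rescaled metric $g_{\af,\bt}$) is positive definite away from the zero section precisely when $\af' > 0$ and $\bt > 2s|\bt'|$ hold for $s>0$. Once these two inequalities are verified, Lemma \ref{lem_key} forces any compact minimal submanifold to be contained in the degeneracy locus of $\Hess(s)$, which is exactly the zero section (since $s$ attains its minimum there).

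Rather than verifying the two inequalities case-by-case from \eqref{bd_metric1}, \eqref{bd_metric2}, \eqref{bd_metric3}, I would exploit the common system \eqref{bd_eq0}:
\begin{align*}
\af' = \kp_1\,\frac{\bt^2}{\af} \qquad\text{and}\qquad \bt' = -\kp_2\,\frac{\bt^3}{\af^2} ~,
\end{align*}
where $\kp_1,\kp_2>0$ in all four Bryant--Salamon cases. Since $\af,\bt>0$, the first equation gives $\af'>0$ immediately. The second gives $|\bt'| = \kp_2\,\bt^3/\af^2$, so
\begin{align*}
\bt - 2s|\bt'| = \bt\left(1 - \frac{2\kp_2 s\,\bt^2}{\af^2}\right) ~,
\end{align*}
and it suffices to show $\af^2 > 2\kp_2 s\,\bt^2$ for $s>0$.

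This is where relation \eqref{bd_relation1} comes in: it states
\begin{align*}
\frac{\af^2}{\bt^2} = \left(\frac{\af(0)}{\bt(0)}\right)^2 + 2(\kp_1+\kp_2)s ~,
\end{align*}
so $\af^2 - 2\kp_2 s\,\bt^2 = \bt^2\bigl[(\af(0)/\bt(0))^2 + 2\kp_1 s\bigr] > 0$ since $\kp_1>0$ and $\bt>0$. This verifies the second inequality of Lemma \ref{lem_bd0}, so $\Hess(s)$ is positive definite on the complement of the zero section. Applying Lemma \ref{lem_key} then yields the conclusion.

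I do not expect any genuine obstacle: Lemma \ref{lem_bd0} has already done the main analytic work of converting Ricci-flat bundle metrics into explicit algebraic conditions on $\af$ and $\bt$, and the Bryant--Salamon ODE system \eqref{bd_eq0} together with the first integral \eqref{bd_relation1} handles these conditions uniformly with the one sign assumption $\kp_1,\kp_2>0$. The only point worth emphasizing in the write-up is that the strict positivity $\kp_1>0$ is what makes the inequality strict for $s>0$, so the zero section is the only locus where $\Hess(s)$ degenerates.
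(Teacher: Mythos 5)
Your proposal is correct and follows essentially the same route as the paper: the paper's proof of Theorem \ref{unique_BS} likewise combines Lemma \ref{lem_bd0} with the system \eqref{bd_eq0} and the first integral \eqref{bd_relation1} to get positive definiteness of $\Hess(s)$ off the zero section, and then invokes Lemma \ref{lem_key}. You have merely written out the short algebraic verification that the paper leaves implicit, and your computation $\af^2 - 2\kp_2 s\,\bt^2 = \bt^2\bigl[(\af(0)/\bt(0))^2 + 2\kp_1 s\bigr] > 0$ matches the quantity appearing in the paper's Hessian estimate \eqref{bd_Hess2}.
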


\begin{proof}
This follows directly from \eqref{bd_eq0}, \eqref{bd_relation1} and Lemma \ref{lem_bd0}.  Moreover, by \eqref{bd_relation1}, we have
\begin{align}\label{bd_Hess2}
\Hess(\psi) \geq 4\kp_1\frac{\bt}{\af^3}s\sum_{i=1}^n\om^i\ot\om^i + \frac{2}{\af^2}\left( \left(\frac{\af(0)}{\bt(0)}\right)^2 + 2\kp_1 s\right)\sum_{\mu=1}^m\om^{n+\mu}\ot\om^{n+\mu} ~.
\end{align}
This Hessian estimate will be needed later.
\end{proof}

\section{Further estimates needed for the stability theorem}\label{sc_estimate}

In this section, we begin preparations for the proof of Theorem \ref{stable}.  Each manifold in Assumption \ref{model_space} is the total space of a vector bundle $\pi:E\to B$.  The base $B$ naturally sits inside $E$ as the zero section.  In each case, we introduce a differential form $\Om$ with the properties explained in section \ref{sc_intro}, and calculate the covariant derivatives of $\Om$.  The calculations will be applied to derive $\CC^1$ estimate of the mean curvature flow.

\subsection{Estimates from linear-algebraic decomposition} \label{sc_linear}

Each metric in Assumption \ref{model_space} admits a local orthonormal coframe $\{\om^j\}_{j=1}^n\cup\{\om^{n+\mu}\}_{\mu=1}^m$ such that $\bigcap_{j=1}^n\ker\om^j = \pi^* E$.  As in section \ref{sc_bc}, The subbundle $\pi^*E\subset TE$ will be called the \emph{vertical subspace}, and will be denoted by $\CV$.  The orthogonal subbundle $\CH\subset TE$ is given by $\bigcap_{\mu=1}^m\ker\om^{n+\mu}$, and is isomorphic to $\pi^*TB$.  This bundle $\CH$ will be referred as the \emph{horizontal subspace}.

In terms of the frame, the $n$-form $\Om$ is $\om^1\w\om^2\w\cdots\w\om^n$.  Let $p\in E$, and suppose that $L\subset T_pE$ is an oriented $n$-dimensional subspace with $\Om(L)>0$.  Then, $L$ can be regarded as the graph of a linear map from $\CH_p$ to $\CV_p$.  By the singular value decomposition, there exist orthonormal bases $\{u_j\}_{j=1}^n$ for $\CH_p$, $\{v_\mu\}_{\mu=1}^m$ for $\CV_p$, and angles $\ta_j\in[0,\pi)$ such that
\begin{align} \label{linear0}
\{e_j = \cos\ta_j\,u_j + \sin\ta_j\,v_j\}_{j=1}^n \qquad\text{and}\qquad
\{e_{n+\mu} = -\sin\ta_\mu\,u_\mu + \cos\ta_\mu\,v_\mu\}_{\mu=1}^m
\end{align}
constitute orthonormal bases for $L$ and $L^\perp$, respectively.  For $j>m$, $v_j$ is set to be the zero vector, and $\ta_j$ is set to be zero.  For $\mu>n$, $u_\mu$ is set to be the zero vector, and $\ta_\mu$ is set to be zero.

Note that neither the frame $\{e_j\}\cup\{e_{n+\mu}\}$ nor $\{u_j\}\cup\{v_\mu\}$ is necessarily dual to $\{\om^j\}\cup\{\om^{n+\mu}\}$.  In any event, $[\om^j(u_i)]_{i,j}$ is an $n\times n$ (special) orthogonal matrix, and $[\om^{n+\mu}(v_\nu)]_{\nu,\mu}$ is a $m\times m$ orthogonal matrix.  Denote by
\begin{align} \label{linear2}
\fs = \max_j |\sin\theta_j| ~.
\end{align}
The following estimates are straightforward to come by:
\begin{align} \label{linear3}
\sum_{i=1}^n \left| (\om^j\ot\om^k)(e_i,e_i) \right| \leq n ~,\quad
\sum_{i=1}^n \left| (\om^{n+\mu}\ot\om^j)(e_i,e_i) \right| \leq n\fs
\end{align}
and
\begin{align}  \begin{split}
&\left| (\om^1\w \cdots \w \om^n)(e_{n+\mu}, e_1,\cdots ,\widehat{e_i}, \cdots, e_n) \right| \leq \fs ~, \\
&\left| (\om^1\w \cdots \w \om^n)(e_{n+\mu}, e_{n+\nu},  e_1, \cdots, \widehat{e_i}, \cdots, \widehat{e_j} \cdots, e_n) \right| \leq \fs^2 ~, \\
&\left| (\om^{n+\mu}\w \om^1 \w \cdots \w \widehat{\om^i} \w \cdots \w \om^n)(e_1,  \cdots, e_n) \right|\leq n \fs ~, \\
&\left| (\om^{n+\mu}\w \om^1 \w \cdots \w \widehat{\om^i} \w \cdots \w \om^n)(e_{n+\nu}, e_1,\cdots, \widehat{e_j} \cdots, e_n) \right| \leq 1 ~, \\
&\left| (\om^{n+\mu} \w \om^{n+\nu} \w \om^1\w \cdots \w\widehat{\om^i}\w \cdots\w \widehat{\om^j} \w \cdots \w \om^n)(e_1,\cdots, e_n) \right| \leq n(n-1)\fs^2 \\
\end{split} \label{linear1} \end{align}
for any $i,j,k \in \{1, \ldots, n\}$ and $\mu,\nu\in\{1, \ldots, m\}$.  To illustrate, we briefly explain the derivation of the first and third inequalities
in \eqref{linear1}.  By \eqref{linear0}, $(\om^1\w \cdots \w \om^n)(e_{n+\mu}, e_1,\cdots ,\widehat{e_i}, \cdots, e_n)$ vanishes unless $\mu = i$, and
\begin{align*}
&\left| (\om^1\w \cdots \w \om^n)(e_1,\cdots, e_{i-1},e_{n+i},e_{i+1}, \cdots, e_n) \right| \\
=\,& \left| (\om^1\w \cdots \w \om^n)(\cos\ta_1 u_1,\cdots ,\cos\ta_{i-1}u_{i-1},-\sin\ta_i u_i, \cos\ta_{i+1}u_{i+1}, \cdots, \cos\ta_n u_n) \right| \leq \fs ~.
\end{align*}
For the third one,
\begin{align*}
&\left| (\om^{n+\mu}\w \om^1 \w \cdots \w \widehat{\om^i} \w \cdots \w \om^n)(e_1,  \cdots, e_n) \right| \\
\leq\, &\sum_{k=1}^n \left| \om^{n+\mu}(e_k) \right|\,\left| (\om^1 \w \cdots \w \widehat{\om^i} \w \cdots \w \om^n)(e_1, \cdots, \widehat{e_k}, \cdots, e_n) \right| \leq n\fs ~.
\end{align*}

Suppose that $\Sm\subset E$ is an oriented, $n$-dimensional submanifold with $\Om(T_p\Sm)>0$.  Applying the above construction to $T_p\Sm$ gives a continuous function $\fs$ on $\Sm$.  With this understanding, the remainder of this section is devoted to estimating
\begin{align*}
\nabla_{e_j}\Om \quad\text{ and }\quad (\tr_{T_p\Sm}\nabla^2\Om)(T_p\Sm) = \sum_{j=1}^n(\nabla^2_{e_j,e_j}\Om)(e_1,\cdots,e_n)
\end{align*}
in terms of $\fs$ and the distance to the zero section.  These estimates will be used in the proof of Theorem \ref{stable}.

\subsection{The Stenzel metric case}\label{sc_Stenzel_est}

Consider the Stenzel metric on $T^*S^n$, and let
\begin{align}
\Om = \om^1\w\om^2\w\cdots\w\om^n ~,
\label{Stenzel_Om}\end{align}
where  $\om^1$ and $\om^j, j=2,\cdots, n$ are defined in  \eqref{frame1}.  The $n$-form $\Om$ is not parallel.  In order to establish the estimates on $\nabla\Om$ and $\nabla^2\Om$, it is convenient to introduce the following notations:
\begin{align} \begin{split}
\Phi &= \om^2\w\cdots\w\om^n ~, \\
\Phi^j &= \iota(e_j)\Phi = (-1)^j\,\om^2\w\cdots\w\widehat{\om^j}\w\cdots\w\om^n ~, \\
\Phi^{jk} &= \iota(e_k)\iota(e_j)\Phi = \begin{cases}
(-1)^{j+k}\,\om^2\w\cdots\w\widehat{\om^k}\w\cdots\w\widehat{\om^j}\w\cdots\w\om^n   &\text{if } k<j ~, \\
(-1)^{j+k+1}\,\om^2\w\cdots\w\widehat{\om^j}\w\cdots\w\widehat{\om^k}\w\cdots\w\om^n   &\text{if } k>j
\end{cases}
\end{split} \label{Phi0} \end{align}
for any $j,k\in\{2,\ldots,n\}$.

By \eqref{conn0}, \eqref{conn1} and \eqref{Ricci1}, the covariant derivatives of the coframe $1$-forms are as follows.
\begin{align} \begin{split}
\nabla\om^1 &= -B\,\om^{n+j}\ot\om^j + (n-1)C\,\om^1\ot\om^{n+1} + A\,\om^j\ot\om^{n+j} ~,  \\
\nabla\om^j &= -\om_k^j\ot\om^k + B\,\om^{n+j}\ot\om^1 + A\,\om^j\ot\om^{n+1} - C\,\om^1\ot\om^{n+j} ~,  \\
\nabla\om^{n+1} &= -(n-1)C\,\om^1\ot\om^1 - A\,\om^j\ot\om^j - B\,\om^{n+j}\ot\om^{n+j} ~, \\
\nabla\om^{n+j} &= -A\,\om^j\ot\om^1 + C\,\om^1\ot\om^j + B\,\om^{n+j}\ot\om^{n+1} - \om^j_k\ot\om^{n+k} ~.
\end{split} \label{cov1} \end{align}
We compute the covariant derivative of $\Phi$ and $\Phi^j$.
\begin{align}
\nabla\Phi &= (\nabla\om^j)\w\Phi^j \notag \\
&= (B\,\om^{n+j})\ot(\om^1\w\Phi^j) - (C\,\om^1)\ot(\om^{n+j}\w\Phi^j) + (A\,\om^j)\ot(\om^{n+1}\w\Phi^j) ~, \label{cov2} \\
\nabla\Phi^j &= (\nabla\om^k)\w\Phi^{jk} \notag \\
\begin{split}&= (B\,\om^{n+k})\ot(\om^1\w\Phi^{jk}) + (A\,\om^k)\ot(\om^{n+1}\w\Phi^{jk}) \\
&\quad - (C\,\om^1)\ot(\om^{n+k}\w\Phi^{jk}) + \om_j^k\ot\Phi^k ~. \label{cov3} \end{split}
\end{align}

Putting \eqref{cov1} and \eqref{cov2} together gives the covariant derivative of $\Om$.
\begin{align} \begin{split}
\nabla\Om &= (n-1)(C\,\om^1)\otimes(\om^{n+1}\w\Phi) + (A\,\om^j)\ot(\om^{n+j}\w\Phi) \\
&\quad -(C\,\om^1)\ot(\om^1\w\om^{n+j}\w\Phi^j) + (A\,\om^j)\ot(\om^1\w\om^{n+1}\w\Phi^j) ~.
\end{split} \label{cov4} \end{align}

{
We also compute the second covariant derivative of $\Om$ by computing the covariant derivative of the four terms on the right hand side of \eqref{cov4}.  By \eqref{cov1}, \eqref{cov2} and \eqref{cov3}:
\begin{align*}
\nabla^2\Omega = (n-1)\,\text{I} + \text{II} + \text{III} + \text{IV}
\end{align*}
where 
\begin{align*}
\text{I} &= \nabla\left( (C\,\om^1)\ot(\om^{n+1}\w\Phi) \right) \\
&= -(n-1)(C^2\, \om^1\ot\om^1)\otimes\Om - (BC\,\om^{n+j}\ot\om^1)\ot(\om^{n+j}\w\Phi + \om^1\w\om^{n+1}\w\Phi^j) \\
&\quad + \left( \dot{C}\,\om^{n+1}\ot\om^1 - BC\,\om^{n+j}\ot\om^j + (n-1)C^2\,\om^1\ot\om^{n+1} + AC\,\om^j\ot\om^{n+j} \right)\ot(\om^{n+1}\w\Phi) \\
&\quad - (C^2\,\om^1\ot\om^1)\ot(\om^{n+1}\w\om^{n+j}\w\Phi^j) ~,
\end{align*}
\begin{align*}
\text{II} &= \nabla\left( (A\,\om^j)\ot(\om^{n+j}\w\Phi) \right) \\
&= -(A^2\,\om^j\ot\om^j)\ot\Om + (AB\,\om^{n+j}\ot\om^j)\ot(\om^{n+1}\w\Phi) + (AB\,\om^{n+k}\ot\om^j)\ot(\om^{n+j}\w\om^1\w\Phi^k) \\
&\quad + \left( \dot{A}\,\om^{n+1}\ot\om^j + A^2\,\om^j\ot\om^{n+1} + AB\,\om^{n+j}\ot\om^1 - AC\,\om^1\ot\om^{n+j} \right)\ot(\om^{n+j}\w\Phi) \\
&\quad -(AC\,\om^1\ot\om^j)\ot(\om^{n+j}\w\om^{n+k}\w\Phi^k) + (A^2\,\om^k\ot\om^j)\ot(\om^{n+j}\w\om^{n+1}\w\Phi^k)~,
\end{align*}
\begin{align*}
\text{III} &= -\nabla\left( (C\,\om^1)\ot(\om^1\w\om^{n+j}\w\Phi^j) \right) \\
&= -(C^2\,\om^1\ot\om^1)\ot\Om - (C^2\,\om^1\ot\om^1)\ot\left( (n-1)\om^{n+1}\w\om^{n+j}\w\Phi^j - \om^1\w\om^{n+j}\w\om^{n+k}\w\Phi^{jk} \right) \\
&\quad -\left( \dot{C}\,\om^{n+1}\ot\om^1 - BC\,\om^{n+j}\ot\om^j + (n-1)C^2\,\om^1\ot\om^{n+1} + AC\,\om^j\ot\om^{n+j} \right)\ot(\om^1\w\om^{n+k}\w\Phi^k) \\
&\quad  -(AC\,\om^k\ot\om^1)\ot(\om^{n+k}\w\om^{n+j}\w\Phi^j + \om^1\w\om^{n+j}\w\om^{n+1}\w\Phi^{jk}) \\
&\quad - (BC\,\om^{n+j}\ot\om^1)\ot(\om^{n+j}\w\Phi + \om^1\w\om^{n+1}\w\Phi^j) ~,
\end{align*}
\begin{align*}
\text{IV} &= \nabla\left( (A\,\om^j)\ot(\om^1\w\om^{n+1}\w\Phi^j) \right) \\
&= -(A^2\,\om^j\ot\om^j)\ot\Om + (A^2\,\om^k\ot\om^j)\ot(\om^{n+k}\w\om^{n+1}\w\Phi^j) \\
&\quad +\left( \dot{A}\,\om^{n+1}\ot\om^j + A^2\,\om^j\ot\om^{n+1} + AB\,\om^{n+j}\ot\om^1 - AC\,\om^1\ot\om^{n+j} \right)\ot(\om^1\w\om^{n+1}\w\Phi^j) \\
&\quad +(AB\,\om^{n+j}\ot\om^j)\ot(\om^{n+1}\w\Phi) - (AB\,\om^{n+k}\ot\om^j)\ot(\om^1\w\om^{n+k}\w\Phi^j) \\
&\quad -(AC\,\om^1\ot\om^{j})\ot(\om^1\w\om^{n+1}\w\om^{n+k}\w\Phi^{jk}) ~.
\end{align*}

By examining the coefficient functions carefully, we conclude the following lemma.  Recall that the $\rho$ coordinate is the distance to the zero section.

\begin{lem} \label{lem_Om_est}
Consider $M = T^*S^n$ with the Stenzel metric \eqref{metric1}, and consider the $n$-form $\Om = \om^1\w\cdots\w\om^n$.  There exists a constant $K>0$ depending only on $n$ with the following property.  Suppose that $\Sm$ is a compact, oriented $n$-dimensional submanifold of $M$ such that $\rho(p)<1$ and $\Om(T_p\Sm)>0$ for any $p\in\Sm$.  Then,
\begin{align*}
&\left| \nabla_{X}\Om \right| (p) < K\rho(p)\,|X| \quad\text{ for any }X\in T_pM ~, \text{ and}\\
&-K\left( \rho^2(p) + \fs^2(p) \right) < (\tr_{T_p\Sm}\nabla^2\Om)(T_p\Sm)  < K\,\fs^2(p) - \frac{1}{K}\,\rho^2(p) ~.
\end{align*}
Here, $|\nabla_X\Om|$ means the metric norm of $\nabla_X\Om$ as a section of $(\Lambda^n T^*M)|_\Sm$ with respect to the metric induced by the Stenzel metric.
\end{lem}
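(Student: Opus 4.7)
My plan is to handle the two inequalities separately. The first is essentially immediate from the explicit formula \eqref{cov4}: every coefficient appearing there is either $A$ or $C$ (crucially, $B$ --- which blows up like $1/\rho$ --- does not appear), and by Lemma \ref{lem_est1} these satisfy $|A|,|C| \leq K\rho$ whenever $\rho<1$. Since each of the four $n$-form factors $\om^{n+1}\w\Phi$, $\om^{n+j}\w\Phi$, $\om^1\w\om^{n+j}\w\Phi^j$ and $\om^1\w\om^{n+1}\w\Phi^j$ is unit-normed, this gives $|\nabla\Om|_g \leq K\rho$ pointwise, and the first inequality follows.

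For the second inequality I will use the decomposition $\nabla^2\Om=(n-1)\mathrm{I}+\mathrm{II}+\mathrm{III}+\mathrm{IV}$ already written out in the excerpt. My first step is to classify the roughly twenty summands into a \emph{main} class and an \emph{error} class. The main class consists of exactly four terms --- one from each of $\mathrm{I},\mathrm{II},\mathrm{III},\mathrm{IV}$ --- whose $n$-form factor is $\Om$ itself; they combine into
$$ T_{\text{main}} \;=\; -\bigl[((n-1)^2+1)\,C^2\,\om^1\ot\om^1 \;+\; 2A^2\,\textstyle\sum_{j=2}^n\om^j\ot\om^j\bigr]\ot\Om. $$
The error class consists of every other summand; each carries at least one $\om^{n+\mu}$ appearing either in its $2$-tensor factor or wedged into its $n$-form factor. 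The algebraic observation I must verify is that, even though $|B|$ diverges as $\rho\to 0$, the function $B$ never appears in the error coefficients except through the bounded combinations $AB$ and $BC$; $\dot B$ does not appear at all, and the factors $\dot A,\dot C$ are bounded by Lemma \ref{lem_est2}. Hence every error-term coefficient is uniformly bounded by a constant depending only on $n$.

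With the classification in hand I will use the singular value decomposition \eqref{linear0} on $T_p\Sm$. Because $\om^{n+\mu}(e_i)=\sin\theta_i\,\om^{n+\mu}(v_i)$, the estimates \eqref{linear3} and \eqref{linear1} show that each $\om^{n+\mu}$-factor in a $2$-tensor trace or in an $n$-form evaluation contributes $O(\fs)$. Counting factor-by-factor shows that every error term is $O(\fs^2)$, possibly with an extra $\rho^2$ factor, so the total error is bounded by $K\fs^2$. Meanwhile the main trace evaluates to
$$ T_{\text{main}}(T_p\Sm) \;=\; -\Om(T_p\Sm)\bigl[((n-1)^2+1)C^2\lambda_1 + 2A^2\,\textstyle\sum_{j=2}^n\mu_j\bigr], $$
where $\lambda_j=\sum_i(\om^j(e_i))^2$; by Lemma \ref{lem_est1} this quantity lies in $[-K\rho^2,0]$, which immediately produces the lower bound $-K(\rho^2+\fs^2)$.

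For the upper bound $K\fs^2-\rho^2/K$ I will split into two cases. Observe first that $\lambda_1+\sum_j\mu_j=\sum_i\cos^2\theta_i = n-\sum_i\sin^2\theta_i$ and $\Om(T_p\Sm)=\prod_i\cos\theta_i$. When $\fs^2\leq 1/(2n)$, both factors are bounded below by constants depending only on $n$, so by Lemma \ref{lem_est1} the main term dominates at $\leq -\rho^2/K'$ and the bound follows. Otherwise $K\fs^2$ already exceeds a fixed positive constant, which (since $\rho<1$) absorbs everything else after $K$ is enlarged. The main obstacle is not conceptual but logistical: verifying the algebraic fact that $B$ only appears in bounded combinations, and then classifying and bounding each of the $\sim\!20$ error terms against the linear-algebra estimates of Section \ref{sc_linear}.
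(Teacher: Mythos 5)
Your proposal is correct and follows essentially the same route as the paper: the first bound comes from the fact that only $A$ and $C$ (both $O(\rho)$) appear in \eqref{cov4}, and the second from splitting $\nabla^2\Om$ into the semi-negative-definite $\Om$-directed part of order $\rho^2$ and the remaining terms, each of which carries either one $\om^{n+\mu}$ in the $2$-tensor and one in the $n$-form (with bounded coefficients such as $AB$, $BC$, $\dot A$, $\dot C$) or two $\om^{n+\mu}$'s wedged into the $n$-form (with $O(\rho^2)$ coefficients), so that \eqref{linear3} and \eqref{linear1} give $O(\fs^2)$ in every case. Your explicit two-case argument for extracting the $-\rho^2/K$ term from the main part (small $\fs$ versus $\fs^2$ bounded below) is a detail the paper leaves implicit, but it is the intended reasoning.
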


\begin{proof}
In \eqref{cov4}, the coefficient functions consist of multiples of $A$ and $C$, which are of order $\rho$ by the second assertion of Lemma \ref{lem_est1}.

There are three types of terms in the formula of $\nabla^2\Om$.
\begin{enumerate}
\item The first type is in the direction of $\Om$. The first term of each of I, II, III, IV is of this type and their sum is
\begin{align*}
\left[-(n^2-2n+2)C^2\,\om^1\ot\om^1 - 2A^2\sum_{j=2}^n\om^j\ot\om^j \right]\otimes \Omega ~.
\end{align*}
The two-tensor in the bracket  is clearly semi-negative definite, and is of order $\rho^2$ by Lemma \ref{lem_est1}.

\item The second one is the linear combination of
$$ \eta = (\om^{n+i}\ot\om^{j})\ot(\om^{n+k}\w\om^1\w\cdots\w\widehat{\om^l}\w\cdots\w\om^n) \quad\text{and}\quad (\om^{i}\ot\om^{n+j})\ot(\text{same})$$
for $i,j,k,l\in\{1,,\ldots,n\}$.  By \eqref{linear3} and the third line of \eqref{linear1}, $ |(\tr_{T_p\Sm}\eta)(T_p\Sm)|\leq n^2\fs^2$.  On the other hand, the coefficient functions are constant multiples of $A^2, AB, BC, AC, C^2, \dot{A}$ or $\dot{C}$.  They are at most of order $1$ by Lemma \ref{lem_est1} and \ref{lem_est2}.

\item The third one is the linear combination of
$$ \eta = (\om^p\ot\om^q)\ot(\om^{n+i}\w\om^{n+j}\w\om^1\w\cdots\w\widehat{\om^k}\w\cdots\w\widehat{\om^l}\w\cdots\w\om^n) $$
for $i,j,k,l,p,q\in\{1,,\ldots,n\}$.  By \eqref{linear3} and the last line of \eqref{linear1}, $ |(\tr_{T_p\Sm}\eta)(T_p\Sm)|\leq n^3\fs^2$.  In these terms, the coefficient functions are multiples of $A^2, AC$ and $C^2$.  They are of order $\rho^2$ by Lemma \ref{lem_est1}.
\end{enumerate}
By the triangle inequality, the proof of this lemma is complete.
\end{proof}



\subsection{The Calabi metric case}
For the Calabi metric, we follow the notations introduced in section \ref{sc_Calabi} and consider the $2n$-form
\begin{align} \label{hk_Om}
\Om = k_n\,(\xi^1\w\xi^2\w\cdots\xi^n)\w\ol{(\xi^1\w\xi^2\w\cdots\w\xi^n)}
\end{align}
where $k_n = (-1)^{\frac{n(n-1)}{2}}(\frac{\ii}{2})^n$.  The $1$-forms $\xi^1$ and $\xi^j$ are given by \eqref{hk_frame1}.  The restriction of $\Om$ to the zero section coincides with the volume form of the zero section.  Let $\fu_\nu$ be the complexified tangent vector defined by $\xi^\mu(\fu_\nu) = \delta^\mu_\nu$ and $\ol{\xi^\mu}(\fu_\nu) = 0$.  Similar to the case of the Stenzel metric, let
\begin{align} \begin{split}
\Xi &= \xi^2\w\cdots\w\xi^n ~, \\
\Xi^j &= \iota(\fu_j)\Xi = (-1)^j\,\xi^2\w\cdots\w\widehat{\xi^j}\w\cdots\w\xi^n ~, \\
\Xi^{jk} &= \iota(\fu_k)\iota(\fu_j)\Xi = \begin{cases}
(-1)^{j+k}\,\xi^2\w\cdots\w\widehat{\xi^k}\w\cdots\w\widehat{\xi^j}\w\cdots\w\xi^n   &\text{if } k<j ~, \\
(-1)^{j+k+1}\,\xi^2\w\cdots\w\widehat{\xi^j}\w\cdots\w\widehat{\xi^k}\w\cdots\w\xi^n   &\text{if } k>j ~
\end{cases}
\end{split} \label{hk_Xi0} \end{align}
for any $j,k\in\{2,\ldots,n\}$.

By \eqref{hk_conn2} and \eqref{hk_conn3}, the covariant derivative of the unitary coframe reads:
\begin{align} \begin{split}
\nabla \xi^1 &= -\ii(\frac{2f}{c^2} - \frac{1}{f})\,(\im\xi^{n+1})\ot\xi^1 + \frac{b}{ac}\,\xi^{n+j}\ot\xi^j - \frac{2f}{c^2}\,\xi^1\ot\xi^{n+1} - \frac{a}{bc}\,\xi^j\ot\xi^{n+j} ~, \\
\nabla \xi^j &= -\frac{b}{ac}\,\ol{\xi^{n+j}}\ot\xi^1 - \xi^j_k\ot\xi^k - \frac{f}{b^2}\,\xi^j\ot\xi^{n+1} ~, \\
\nabla \xi^{n+1} &= \frac{2f}{c^2}\,\ol{\xi^1}\ot\xi^1 + \frac{f}{b^2}\,\ol{\xi^j}\ot\xi^j + \ii(\frac{2f}{c^2} - \frac{1}{f})\,(\im\xi^{n+1})\ot\xi^{n+1} + \frac{f}{a^2}\,\ol{\xi^{n+j}}\ot\xi^{n+j} ~, \\
\nabla \xi^{n+j} &= \frac{a}{bc}\,\ol{\xi^j}\ot\xi^1 - \frac{f}{a^2}\,\xi^{n+j}\ot\xi^{n+1} + \xi^k_j\ot\xi^{n+k} ~.
\end{split} \label{hk_cov1} \end{align}
It follows that the covariant derivative of $\Xi$ and $\Xi^j$ are as follows:
\begin{align}
\nabla \Xi &= (\nabla\xi^j)\w\Xi^j \notag \\
&= - \frac{b}{ac}\,\ol{\xi^{n+j}}\ot(\xi^1\w\Xi^j) - \xi_j^j\ot\Xi - \frac{f}{b^2}\,\xi^j\ot(\xi^{n+1}\w\Xi^j) ~,  \label{hk_cov2} \\
\nabla \Xi^j &= (\nabla\xi^k)\w\Xi^{jk} \notag \\
&= -\frac{b}{ac}\,\ol{\xi^{n+k}}\ot(\xi^1\w\Xi^{jk}) - \xi_k^k\ot\Xi^j + \xi^k_j\ot\Xi^k - \frac{f}{b^2}\,\xi^k\ot(\xi^{n+1}\w\Xi^{jk}) ~. \label{hk_cov3}
\end{align}


By combining \eqref{hk_cov1} and \eqref{hk_cov2}, the covariant derivative of $\xi^1\w\Xi = \xi^1\w\xi^2\w\cdots\w\xi^n$ is
\begin{align} \begin{split}
\nabla(\xi^1\w\Xi) &= \ii(\frac{1}{f} - \frac{2f}{c^2})\,(\im\xi^{n+1})\ot(\xi^1\w\Xi) - \frac{2f}{c^2}\,\xi^1\ot(\xi^{n+1}\w\Xi) \\
&\quad -\frac{a}{bc}\,\xi^j\ot(\xi^{n+j}\w\Xi) - \xi_j^j\ot(\xi^1\w\Xi) - \frac{f}{b^2}\,\xi^j\ot(\xi^1\w\xi^{n+1}\w\Xi^j) ~.
\end{split} \label{hk_cov4} \end{align}
Since $\Om$ is real,
\begin{align*}
\nabla\Om &= k_n\, \left(\nabla(\xi^1\w\Xi)\right)\w\ol{\xi^1}\w\ol{\Xi} + \text{(conjugate)} ~,
\end{align*}
so
\begin{align} \begin{split}
\nabla\Om &= - k_n \frac{2f}{c^2}\,\xi^1\ot\left( \xi^{n+1}\w\Xi\w\ol{\xi^1}\w\ol{\Xi} \right) - k_n \frac{a}{bc}\,\xi^j\ot\left( \xi^{n+j}\w\Xi\w\ol{\xi^1}\w\ol{\Xi} \right) \\
&\quad - k_n \frac{f}{b^2}\,\xi^j\ot\left( \xi^1\w\xi^{n+1}\w\Xi^j\w\ol{\xi^1}\w\ol{\Xi} \right) + (\text{their conjugates})
\end{split} \label{hk_cov5} \end{align}
where we have also used the fact that $\xi_j^k$ is skew-Hermitian, which is why the second-last term from the right hand side of  \eqref{hk_cov4} ends up canceling with its conjugate.


Note that by \eqref{hk_coeff1}, the second coefficient, $a/(bc)$, is equal to $f/b^2$.  The next step is to calculate the second order derivative of $\Om$, which is a sum of the covariant derivative of the six terms on the right hand side of \eqref{hk_cov5}.   Due to \eqref{hk_cov1}, \eqref{hk_cov2}, \eqref{hk_cov3}, \eqref{hk_cov4} and the relations \eqref{hk_coeff1}:
\begin{align}
\nabla^2\Om &= -k_n \left( (\text{I} +　\text{II} + \text{III}) + (\ol{\text{I}} + \ol{\text{II}} + \ol{\text{III}}) \right)
\label{hk_cov6} \end{align}
where
\begin{align*}
\text{I} &= \nabla\left(\frac{2f}{c^2}\,\xi^1\ot\Big( \xi^{n+1}\w\Xi\w\ol{\xi^1}\w\ol{\Xi} \Big)\right) \\
&= \frac{4f^2}{c^4}\,(\ol{\xi^1}\ot\xi^1)\ot\left( \xi^1\w\Xi\w\ol{\xi^1}\w\ol{\Xi} + \xi^{n+1}\w\Xi\w\ol{\xi^{n+1}}\w\ol{\Xi} \right) \\
&\quad + \left( \Big(\frac{2f}{c^2}\Big)'\frac{1}{h}\,(\re\xi^{n+1})\ot\xi^1 + \ii\frac{2f}{c^2}\Big(\frac{2f}{c^2} - \frac{1}{f}\Big)\,(\im\xi^{n+1})\ot\xi^1 - \frac{4f^2}{c^4}\,\xi^1\ot\xi^{n+1} \right. \\
&\qquad\quad \left. + \frac{2b^2}{c^4}\,\xi^{n+j}\ot\xi^j - \frac{2a^2}{c^4}\,\xi^j\ot\xi^{n+j} \right)\ot\left( \xi^{n+1}\w\Xi\w\ol{\xi^1}\w\ol{\Xi} \right) \\
&\quad + \frac{2b^2}{c^4}\,(\ol{\xi^{n+j}}\ot\xi^1)\ot\left(  \xi^{n+j}\w\Xi\w\ol{\xi^1}\w\ol{\Xi} \right) - \frac{2a^2}{c^4}\,(\ol{\xi^j}\ot\xi^1)\ot\left( \xi^{n+1}\w\Xi\w\ol{\xi^{n+j}}\w\ol{\Xi} \right) \\
&\quad + \frac{2b^2}{c^4}\,(\ol{\xi^{n+j}}\ot\xi^1)\ot\left( \xi^1\w\xi^{n+1}\w\Xi^j\w\ol{\xi^1}\w\ol{\Xi} \right) - \frac{2a^2}{c^4}\,(\ol{\xi^j}\ot\xi^1)\ot\left( \xi^{n+1}\w\Xi\w\ol{\xi^1}\w\ol{\xi^{n+1}}\w\ol{\Xi^j} \right) ~,
\end{align*}
\begin{align*}
\text{II} &= \nabla\left( \frac{f}{b^2}\,\xi^j\ot\Big( \xi^{n+j}\w\Xi\w\ol{\xi^1}\w\ol{\Xi}\Big) \right) \\
&= \frac{f^2}{b^4}\,(\ol{\xi^j}\ot\xi^j)\ot\left( \xi^1\w\Xi\w\ol{\xi^1}\w\ol{\Xi} \right) - \frac{1}{c^2}\,(\xi^{n+j}\ot\xi^j)\ot\left( \xi^{n+1}\w\Xi\w\ol{\xi^1}\w\ol{\Xi} \right) \\
&\quad + \left( \Big(\frac{f}{b^2}\Big)'\frac{1}{h}\,(\re\xi^{n+1})\ot\xi^j + \ii\frac{f}{b^2}\Big(\frac{2f}{c^2} - \frac{1}{f}\Big)\,(\im\xi^{n+1})\ot\xi^j - \frac{1}{c^2}\,\ol{\xi^{n+j}}\ot\xi^1 \right. \\
&\qquad\quad \left. - \frac{f^2}{b^4}\,\xi^j\ot\xi^{n+1} \right)\ot\left( \xi^{n+j}\w\Xi\w\ol{\xi^1}\w\ol{\Xi} \right) \\
&\quad -\frac{2a^2}{c^4}\,(\ol{\xi^1}\ot\xi^j)\ot\left( \xi^{n+j}\w\Xi\w\ol{\xi^{n+1}}\w\ol{\Xi} \right) - \frac{f^2}{b^4}\,(\ol{\xi^k}\ot\xi^j)\ot\left( \xi^{n+j}\w\Xi\w\ol{\xi^{n+k}}\w\ol{\Xi} \right) \\
&\quad + \frac{1}{c^2}\,(\ol{\xi^{n+k}}\ot\xi^j)\ot\left( \xi^1\w\xi^{n+j}\w\Xi^k\w\ol{\xi^1}\w\ol{\Xi} \right)  + \frac{f^2}{b^4}\,(\xi^k\ot\xi^j)\ot\left( \xi^{n+1}\w\xi^{n+j}\w\Xi^k\w\ol{\xi^1}\w\ol{\Xi} \right) \\
&\quad - \frac{f^2}{b^4}\,(\ol{\xi^k}\ot\xi^j)\ot\left( \xi^{n+j}\w\Xi\w\ol{\xi^1}\w\ol{\xi^{n+1}}\w\ol{\Xi^k} \right) ~,
\end{align*}
\begin{align*}
\text{III} &= \nabla\left( \frac{f}{b^2}\,\xi^j\ot\Big(\xi^1\w\xi^{n+1}\w\Xi^j\w\ol{\xi^1}\w\ol{\Xi}\Big) \right) \\
&= \frac{f^2}{b^4}\,(\ol{\xi^j}\ot\xi^j)\ot\left( \xi^1\w\Xi\w\ol{\xi^1}\w\ol{\Xi} \right) - \frac{1}{c^2}\,(\xi^{n+j}\ot\xi^j)\ot\left( \xi^{n+1}\w\Xi\w\ol{\xi^1}\w\ol{\Xi} \right) \\
&\quad + \left( \Big(\frac{f}{b^2}\Big)'\frac{1}{h}\,(\re\xi^{n+1})\ot\xi^j + \ii\frac{f}{b^2}\Big(\frac{2f}{c^2} - \frac{1}{f}\Big)\,(\im\xi^{n+1})\ot\xi^j - \frac{1}{c^2}\,\ol{\xi^{n+j}}\ot\xi^1 \right. \\
&\qquad\quad \left. - \frac{f^2}{b^4}\,\xi^j\ot\xi^{n+1} \right)\ot\left( \xi^1\w\xi^{n+1}\w\Xi^j\w\ol{\xi^1}\w\ol{\Xi} \right) \\
&\quad + \frac{f^2}{b^4}\,(\xi^k\ot\xi^j)\ot\left( \xi^{n+1}\w\xi^{n+k}\w\Xi^j\w\ol{\xi^1}\w\ol{\Xi} \right) + \frac{1}{c^2}\,(\ol{\xi^{n+k}}\ot\xi^j)\ot\left( \xi^1\w\xi^{n+k}\w\Xi^j\w\ol{\xi^1}\w\ol{\Xi} \right) \\
&\quad - \frac{2a^2}{c^4}\,(\ol{\xi^1}\ot\xi^j)\ot\left( \xi^1\w\xi^{n+1}\w\Xi^j\w\ol{\xi^{n+1}}\w\ol{\Xi} \right) - \frac{f^2}{b^4}\,(\ol{\xi^k}\ot\xi^j)\ot\left( \xi^1\w\xi^{n+1}\w\Xi^j\w\ol{\xi^{n+k}}\w\ol{\Xi} \right) \\
&\quad - \frac{f^2}{b^4}\,(\ol{\xi^k}\ot\xi^j)\ot\left( \xi^1\w\xi^{n+1}\w\Xi^j\w\ol{\xi^1}\w\ol{\xi^{n+1}}\w\ol{\Xi^k} \right) ~.
\end{align*}

Recall that the distance to the zero section with respect to the Calabi metric is $\rho = \int_0^r\sqrt{\cosh(2u)}\dd u$, and the asymptotic behavior of the coefficient functions near $\rho = 0$ can be found easily from \eqref{hk_coeff0}.  By applying \eqref{linear3} and \eqref{linear1} on \eqref{hk_cov5}and \eqref{hk_cov6}, a completely parallel argument as that in the proof of Lemma \ref{lem_Om_est} leads to the following lemma.

\begin{lem} \label{lem_Om_est_hk}
Consider $M=T^*\BCP^n$ with the Calabi metric \eqref{hk_metric1}, and consider the $2n$-form $\Om$ defined by \eqref{hk_Om}.  There exists a constant $K>0$ depending only on $n$ which has the following property.  Suppose that $\Sm$ is a compact, oriented $2n$-dimensional submanifold of $M$ such that $\rho(p)<1$ and $\Om(T_p\Sm)>0$ for any $p\in\Sm$.  Then,
\begin{align*}
&\left| \nabla_{X}\Om \right| < K\rho(p)\,|X| \quad\text{ for any }X\in T_pM ~, \text{ and}\\
&-K\left( \rho^2(p) + \fs^2(p) \right) < (\tr_{T_p\Sm}\nabla^2\Om)(T_p\Sm)  < K\,\fs^2(p) - \frac{1}{K}\,\rho^2(p) ~.
\end{align*}
\end{lem}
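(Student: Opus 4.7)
The plan is to run the same three-step argument as in the proof of Lemma~\ref{lem_Om_est}, with the Calabi-metric formulas \eqref{hk_cov5} for $\nabla\Om$ and the expressions for I, II, III assembling \eqref{hk_cov6} for $\nabla^2\Om$ playing the roles of \eqref{cov4} and the four terms I--IV of the Stenzel case.

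For the pointwise bound $|\nabla_X\Om|(p) \leq K\rho(p)|X|$, I would read off the three coefficient functions in \eqref{hk_cov5}, namely $2f/c^2$, $a/(bc)$, and $f/b^2$. From \eqref{hk_coeff0} one checks $f = \tfrac{1}{2}\sinh(2r)/\sqrt{\cosh(2r)} = O(r)$, $a = \sinh r = O(r)$, while $b,c \to 1$, so each of the three is $O(r)$ as $r\to 0$; and since $\rho = \int_0^r\sqrt{\cosh(2u)}\,\dd u = r + O(r^3)$ is uniformly comparable to $r$ on $\{\rho<1\}$, each coefficient is $O(\rho)$. The triangle inequality applied to \eqref{hk_cov5} and its conjugate then gives the first inequality, with $K$ depending only on $n$.

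For the Hessian estimate I would classify each summand of $\nabla^2\Om$---read off from I, II, III and their conjugates---into one of three types, in exact parallel with the three cases in the proof of Lemma~\ref{lem_Om_est}. The first type consists of terms in the direction of $\Om$, coming from contractions producing the full wedge $\xi^1\w\Xi\w\ol{\xi^1}\w\ol{\Xi}$; these appear as the first term of each of I, II, and III. Their coefficients are products of two $O(\rho)$ factors and hence $O(\rho^2)$, and the symmetric two-tensor they tensor against $\Om$ should be semi-negative definite, mirroring the identity $-(n^2-2n+2)C^2\om^1\ot\om^1 - 2A^2\sum_j\om^j\ot\om^j$ from the Stenzel case. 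Tracing against $T_p\Sm$, on which $\Om$ is bounded below once $\fs$ is sufficiently small, yields the $-\tfrac{1}{K}\rho^2$ term in the upper bound and a matching contribution to the lower bound. The second type consists of one-vertical mixed terms, whose wedge factor differs from $\Om$ by exactly one $\xi^{n+\mu}$ or $\ol{\xi^{n+\mu}}$; by \eqref{linear3} and \eqref{linear1} their trace against $T_p\Sm$ is controlled by $\fs^2$, and since the relevant coefficients (e.g.\ $(2f/c^2)'/h$ or $1/c^2$) are merely $O(1)$, the total contribution is $O(\fs^2)$. The third type consists of two-vertical terms carrying two extra vertical factors in the wedge; these are bounded by $\fs^2$ times coefficients of order $\rho^2$ and are absorbed into $K\fs^2$. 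Summing the three types yields the claimed two-sided Hessian bound.

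The principal delicate point---and the reason the Calabi bookkeeping is subtler than the Stenzel case---is that $\nabla(\xi^1\w\Xi)$ in \eqref{hk_cov4} carries purely imaginary coefficients in front of $\im\xi^{n+1}$ as well as skew-Hermitian $\xi_j^j$ contributions, and these must cancel exactly against their conjugates when one forms the real expression $\nabla\Om$. The formulas \eqref{hk_cov5} and I, II, III quoted in the paper already incorporate this cancellation, so the remaining work is the combinatorial check that the sum of the first terms of I, II, III is indeed semi-negative definite, together with the termwise counts against \eqref{linear3}--\eqref{linear1}; no further analytic input beyond what is already present in the Stenzel argument should be needed.
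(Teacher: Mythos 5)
Your proposal is correct and follows essentially the same route as the paper, which simply invokes the asymptotics of the coefficient functions from \eqref{hk_coeff0}, the displayed formulas \eqref{hk_cov5}--\eqref{hk_cov6}, and the linear-algebra bounds \eqref{linear3}--\eqref{linear1} in "a completely parallel argument" to Lemma \ref{lem_Om_est}; your three-type classification is exactly that parallel argument spelled out. The only minor imprecision is that the first term of I also contains a piece proportional to $\xi^{n+1}\w\Xi\w\ol{\xi^{n+1}}\w\ol{\Xi}$, which belongs to your third (two-vertical, $O(\rho^2)$-coefficient) type rather than the $\Om$-direction type, but this does not affect the estimate.
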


\subsection{The Bryant--Salamon metric case}
Consider the $n$-form
\begin{align}
\Om = \om^1\w\om^2\w\cdots\w\om^n ~.
\label{bd_Om} \end{align}
The notations in this subsection follow those in section \ref{sc_BS}, and the $1$-forms $\om^j$ are defined by \eqref{bd_frame0}.  Similar to the case of the Stenzel metric, it is convenient to introduce the following shorthand notations:
\begin{align}
\Om^j &= \iota(e_j)\Om = (-1)^{j+1}\om^1\w\cdots\w\widehat{\om^j}\w\cdots\w\om^n ~, \label{bd_Om1} \\
\Om^{jk} &= \iota(e_k)\iota(e_j)\Om = \begin{cases}
(-1)^{j+k}\,\om^1\w\cdots\w\widehat{\om^k}\w\cdots\w\widehat{\om^j}\w\cdots\w\om^n   &\text{if } k<j ~, \\
(-1)^{j+k+1}\,\om^1\w\cdots\w\widehat{\om^j}\w\cdots\w\widehat{\om^k}\w\cdots\w\om^n   &\text{if } k>j ~.
\end{cases} \label{bd_Om2}
\end{align}

The covariant derivative of $\Om$ is
\begin{align}
\nabla\Om &= (\nabla\om^j)\w\Om^j = \om^{n+\mu}_j\ot(\om^{n+\mu}\w\Om^j) ~.
\label{bd_cov1} \end{align}

For $\nabla^2\Om$, one expects that the covariant derivative of the curvature, $\nabla_A F\in\CC^\infty(B;T^*B\ot(\Lambda^2 T^*B \ot\End E))$, will show up.  The coefficient $1$-form of $\nabla_A F$ is
\begin{align}
(\nabla_A F)_{\nu\,jk}^\mu &= \dd F^\mu_{\nu\,jk} + A^\mu_\gm\,F^\gm_{\nu\,jk} - F^\mu_{\gm\,jk}\,A^\gm_\nu - F^\mu_{\nu\,ik}\,\ul{\om}^i_j - F^\mu_{\nu\,ji}\,\ul{\om}^i_k ~.
\label{bd_cov2} \end{align}
We are now ready to calculate the second order derivative of $\Om$.  Since
\begin{align}
\nabla \om^{n+\mu} &= -\om^{n+\mu}_i\ot\om^i - \om^{n+\mu}_{n+\nu}\ot\om^{n+\nu} \quad\text{and}\label{bd_cov3} \\
\nabla \Om^j &= (\nabla\om^k)\w\Om^{jk} = \om_j^k\ot\Om^k + \om_k^{n+\nu}\ot(\om^{n+\nu}\w\Om^{jk}) ~, \label{bd_cov4}
\end{align}
the covariant derivative of \eqref{bd_cov1} is
\begin{align} \begin{split}
\nabla^2\Om &= -(\om^{n+\mu}_i\ot\om^{n+\mu}_i)\ot\Om + (\om^{n+\nu}_k\ot\om^{n+\mu}_j)\ot(\om^{n+\mu}\w\om^{n+\nu}\w\Om^{jk}) \\
&\quad + \left( \nabla\om^{n+\mu}_j + \om^{n+\mu}_{n+\nu}\ot\om^{n+\nu}_j + \om^j_k\ot\om^{n+\mu}_k \right)\ot(\om^{n+\mu}\w\Om^j) ~.
\end{split} \label{bd_cov5} \end{align}
The first two coefficients on the right hand side of \eqref{bd_cov5} can be substituted by \eqref{bd_conn1}.  We compute the coefficient in the second line of \eqref{bd_cov5}.  By \eqref{bd_conn1},
\begin{align*}
\nabla\om^{n+\mu}_j &=\nabla\left( \frac{\bt}{2\af^2}F^\mu_{\nu\,jk}y^\nu\,\om^k \right) - \nabla\left( \frac{2\af'}{\af\bt}y^\mu\,\om^j \right) ~.
\end{align*}
With the help of \eqref{bd_frame0}, \eqref{bd_conn0}, \eqref{bd_dr} and \eqref{bd_cov3}, we have
\begin{align} \begin{split}
\nabla\left( \frac{\bt}{2\af^2}F^\mu_{\nu\,jk}y^\nu\,\om^k \right)
&= \frac{2}{\bt}\left( \frac{\bt}{2\af^2} \right)' F^\mu_{\nu\,jk} y^\nu y^\gm\,\om^{n+\gm}\ot\om^k + \frac{1}{2\af^2} F^\mu_{\nu\,jk}\,\om^{n+\nu}\ot\om^k\\
&\quad + \frac{\bt}{2\af^2} F^\mu_{\nu\,jk} y^\nu\,\om^{n+\gm}_k\ot\om^{n+\gm} - \frac{\bt^2}{4\af^2} (F^\mu_{\nu\,jk}y^\nu)(F^\sm_{\gm\,ik}y^\gm) \,\om^{n+\sm}\ot\om^i \\
&\quad + \frac{\bt}{2\af^2}y^\nu \left( \dd F^\mu_{\nu\,jk} - F^\mu_{\gm\,jk}\,A^\gm_\nu - F^\mu_{\nu\,ji}\,\ul{\om}^i_k \right)\ot\om^k ~,
\end{split} \label{bd_cov6} \end{align}
and
\begin{align} \begin{split}
-\nabla\left( \frac{2\af'}{\af\bt}y^\mu\,\om^j \right)
&= -\frac{2}{\bt} \left(\frac{2\af'}{\af\bt}\right)' y^\mu y^\nu\,\om^{n+\nu}\ot\om^j - \frac{2\af'}{\af\bt^2}\,\om^{n+\mu}\ot\om^j \\
&\quad + \frac{\af'}{\af^2} y^\mu F^\gm_{\nu\;kj} y^\nu\,\om^{n+\gm}\ot\om^k - \frac{2\af'}{\af\bt}y^\mu\,\om^{n+\nu}_j\ot\om^{n+\nu} \\
&\quad + \frac{2\af'}{\af\bt} y^\gm\,A^\mu_\gm\ot\om^j + \frac{2\af'}{\af\bt}y^\mu\,\ul{\om}^j_k\ot\om^k ~.
\end{split} \label{bd_cov7} \end{align}
Due to \eqref{bd_conn0} \eqref{bd_conn1} and \eqref{bd_conn2},
\begin{align} \begin{split}
\om^{n+\mu}_{n+\nu}\ot\om^{n+\nu}_j &= \frac{2\bt'}{\bt^2}(y^\nu\,\om^{n+\mu} - y^\mu\,\om^{n+\nu})\ot\om_j^{n+\nu} \\
&\quad - \frac{2\af'}{\af\bt} y^\nu\, A^\mu_\nu\ot\om^j + \frac{\bt}{2\af^2} y^\nu (A^\mu_\gm\,F^\gm_{\nu\,jk})\ot\om^k ~,
\end{split} \label{bd_cov8} \end{align}
and
\begin{align} \begin{split}
-\om^k_j\ot\om^{n+\mu}_k &= - \frac{\bt}{2\af^2} F^\gm_{\nu\,jk}y^\nu\,\om^{n+\gm}\ot\om^{n+\mu}_k \\
&\quad + \frac{2\af'}{\af\bt}y^\mu\,\ul{\om}^k_j\ot\om^k - \frac{\bt^2}{2\af^2} y^\nu (F^\mu_{\nu\,ik}\,\ul{\om}^i_j)\ot\om^k ~.
\end{split} \label{bd_cov9} \end{align}
To summarize the above computations, note that
\begin{itemize}
\item the sum of the last terms of \eqref{bd_cov6}, \eqref{bd_cov8} and \eqref{bd_cov9} is  a multiple of $\nabla_A F$  by \eqref{bd_cov2};
\item the last term of \eqref{bd_cov7} cancels with the second-last term of \eqref{bd_cov9};
\item the second-last term of \eqref{bd_cov7} cancels with that of \eqref{bd_cov8}.
\end{itemize}

The above computation is for a general bundle construction.  We now examine the expressions for the Bryant-Salamon metric.  It is more convenient to consider the function $s = \sum_\mu (y^\mu)^2$, which is equivalent to the distance square to the zero section on any compact region.

\begin{lem} \label{lem_Om_est_BS}
Consider the $n$-form $\Om = \om^1\w\cdots\w\om^n$ on each of the Bryant--Salamon manifolds.  There exists a constant $K>0$ which has the following property.  Suppose that $\Sm$ is a compact, oriented $n$-dimensional submanifold of $M$ such that $s(p)<1$ and $\Om(T_p\Sm)>0$ for any $p\in\Sm$.  Then,
\begin{align*}
&\left| \nabla_{X}\Om \right| < K \sqrt{s(p)}\, |X| \quad\text{ for any }X\in T_pM ~, \text{ and}\\
&-K\left( s(p) + \fs^2(p) \right) < (\tr_{T_p\Sm}\nabla^2\Om)(T_p\Sm)  < K\,\fs^2(p) - \frac{1}{K}s(p) ~.
\end{align*}
\end{lem}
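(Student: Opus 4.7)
The proof mirrors that of Lemma \ref{lem_Om_est}: we use the explicit formulas \eqref{bd_cov1} and \eqref{bd_cov5} (with the inner coefficient further expanded via \eqref{bd_cov6}-\eqref{bd_cov9}) and apply the linear-algebraic estimates \eqref{linear3}, \eqref{linear1} of section \ref{sc_linear}. The first inequality is immediate: by \eqref{bd_conn1}, each $\om^{n+\mu}_j$ is a linear combination of horizontal coframe $1$-forms with coefficients containing a factor $y^\nu$ or $y^\mu$, so $|\om^{n+\mu}_j|\leq K\sqrt{s}$ on $\{s<1\}$ (using that $\af, \bt, \af'$ and the base curvature $F$ are uniformly bounded there, as visible from \eqref{bd_metric1}-\eqref{bd_metric3}).

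The key observation for the second inequality is that the only term in $\nabla^2\Om$ whose exterior form factor equals $\Om$ is the first term of \eqref{bd_cov5}, namely $-(\om^{n+\mu}_i\ot\om^{n+\mu}_i)\ot\Om$. This $2$-tensor is manifestly semi-negative definite. Substituting \eqref{bd_conn1}, the cross term between the two summands of $\om^{n+\mu}_i$ vanishes by the skew-symmetry $F^\mu_\nu=-F^\nu_\mu$ of the $\mathfrak{o}(m)$-valued curvature (so that $\sum_{\mu,\nu}y^\mu y^\nu F^\mu_{\nu,ij}=0$), the remaining $F$-quadratic contribution is itself semi-negative definite of order $s$, and the radial part contributes
\begin{align*}
-\left(\frac{2\af'}{\af\bt}\right)^{2} s\,\sum_i \om^i\ot\om^i .
\end{align*}
Since $\af'$ is bounded below by a positive constant on $\{s<1\}$, tracing against $T_p\Sm$ (noting $\sum_j \cos^2\theta_j \geq n(1-\fs^2)$) and multiplying by $\Om(T_p\Sm)=\prod_j \cos\theta_j$ yields an $\Om$-direction contribution bounded above by $-s/K + K\fs^2$.

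All remaining terms have form factor $\om^{n+\mu}\w\Om^j$ or $\om^{n+\mu}\w\om^{n+\nu}\w\Om^{jk}$, and are treated as in the Stenzel proof. For the former (produced by \eqref{bd_cov6}-\eqref{bd_cov9}), the form contributes $\leq K\fs$ after tracing by \eqref{linear1}, while the $2$-tensor coefficient splits into three subcases: order $1$ with mixed vertical-horizontal structure (gaining an extra $\fs$ via \eqref{linear3}, hence $\fs^2$ total), order $\sqrt{s}$ (bounded by $\sqrt{s}\fs\leq\tfrac{1}{2}(s+\fs^2)$), or order $s$. For the latter, the form contributes $\leq K\fs^2$ while the coefficient carries two $y$ factors, giving $O(s\fs^2)\leq K\fs^2$. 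Assembling these bounds yields both the upper and the lower estimate.

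The principal obstacle is combinatorial bookkeeping: \eqref{bd_cov6}-\eqref{bd_cov9} unfold into many terms, but once each is sorted into its form-factor category, the estimate reduces to verifying the cross-term cancellation (via skew-symmetries of $F$ and $A$) and routinely applying \eqref{linear3}, \eqref{linear1}, and AM-GM. No new geometric ideas beyond those of the Stenzel case are required.
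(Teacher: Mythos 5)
Your proposal is correct and follows essentially the same route as the paper: decompose $\nabla^2\Om$ by form factor using \eqref{bd_cov5}--\eqref{bd_cov9}, observe that the $\Om$-direction coefficient is semi-negative definite of order $s$ (with the radial part of $\om^{n+\mu}_i\ot\om^{n+\mu}_i$ supplying the definite $-s/K$ term, the cross terms vanishing by skew-symmetry of $F$), and bound every remaining term by $K\fs^2$ via \eqref{linear3} and \eqref{linear1} since each carries at least one vertical codirection after the cancellations (including $\nabla_A F\equiv 0$). Your extra explanation of where the $-\frac{1}{K}s$ in the upper bound comes from is a useful elaboration of the paper's terser statement that the first coefficient is "non-positive definite and of order $s$."
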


\begin{proof}
The coefficient functions $\af$ and $\bt$ have explicit expressions, \eqref{bd_metric1}, \eqref{bd_metric2} and \eqref{bd_metric3}.  The only property needed here is that $\af$, $\bt$ and their derivatives are uniformly bounded when $s\in[0,1]$.  The estimate on $\nabla_X\Om$ follows directly from \eqref{bd_conn1} and \eqref{bd_cov1}.

To estimate $\nabla^2\Om$, consider \eqref{bd_cov5}, \eqref{bd_cov6}, \eqref{bd_cov7}, \eqref{bd_cov8} and \eqref{bd_cov9}:
\begin{itemize}
\item The first coefficient $2$-tensor on the right hand side of \eqref{bd_cov5} is non-positive definite, and is of order $s$ when $s(p)<1$.
\item The second term on the right hand side of \eqref{bd_cov5} carries $\om^{n+\mu}\w\om^{n+\nu}\w\cdots$.
\item As explained in Appendix \ref{sc_bd_Fcurv}, $\nabla_A F\equiv0$.  Thus, each term of the third coefficient $2$-tensor on the right hand side of \eqref{bd_cov5} carries at least one $\om^{n+\mu}$-codirection.
\end{itemize}
Then, the lemma follows from \eqref{linear3} and \eqref{linear1}.
\end{proof}

\section{The stability of zero sections}\label{sc_stability}

Suppose $\Sigma_t$ is a mean curvature flow of $n$-dimensional compact submanifolds in an ambient Riemannian manifold $M$ and $\Omega$ is an $n$-form on $M$. For any
point $p\in \Sigma_t$, let $\{e_1,\cdots,e_n\}$ be an orthonormal
frame of $T\Sigma_t$ near $p$ and $\{e_{n+1}, \cdots, e_{n+m}\}$
be an orthonormal frame of the normal bundle of $\Sigma_t$ near
$p$. In the following, the indexes $i, j, k$ range from $1$ to $n$, the indexes $\alpha, \beta, \gamma$ range from $n+1$ to $n+m$, and repeated indexes are summed.  Let $h_{\alpha ij}=\langle\nabla_{e_i} e_j , e_\alpha\rangle$ denote the coefficients of the second fundamental form of $\Sigma_t$.   Here, $\nabla$ is the Levi-Civita connection of the ambient manifold $M$.

We first recall the following proposition from \cite{ref_W4}*{Proposition 3.1}
\begin{prop} Along the mean curvature flow $\Sigma_t$ in $M$, 
$*\Omega =\Omega(e_1, \cdots, e_n)$ satisfies
\begin{align}\label{*Omega}
\begin{split}
\frac{\dd}{\dd t} *\Omega &=\Delta^{\Sm_t} *\Omega +*\Omega
(\sum_{\alpha ,i,k}h_{\alpha i k}^2)\\
&-2\sum_{\alpha, \beta, k}[\Omega_{\alpha \beta 3\cdots n}
h_{\alpha 1k}h_{\beta 2k} + \Omega_{\alpha 2 \beta\cdots n}
h_{\alpha 1k}h_{\beta 3k} +\cdots +\Omega_{1\cdots (n-2) \alpha
\beta} h_{\alpha (n-1)k}
h_{\beta nk}]\\
&-2(\nabla_{e_k}\Omega) (e_\alpha, \cdots, e_{n})h_{\alpha 1 k}
-\cdots-2(\nabla_{e_k}\Omega) (e_{1}, \cdots, e_\alpha
)h_{\alpha n k}\\
&-\sum_{\alpha, k}[\Omega_{\alpha 2 \cdots n}R_{\alpha kk1}
+\cdots +\Omega_{1\cdots (n-1)\alpha}R_{\alpha kkn}]-(\nabla^2_{e_k,e_k}\Omega)(e_{1},
\cdots,e_{n})
\end{split}
\end{align}
where $\Delta^{\Sm_t}$ denotes the time-dependent Laplacian on $\Sigma_t$, $\Omega_{\alpha\beta 3\cdots n}=\Omega(e_\alpha, e_\beta, e_3, \cdots, e_n)$ etc., and $R_{\alpha kk1}=R(e_\alpha, e_k, e_k, e_1)$, etc.\ are the coefficients of the curvature operators of $M$. 
\end{prop}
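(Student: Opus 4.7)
The strategy is a pointwise calculation at a distinguished spacetime point, matching the time derivative of $*\Om$ against the intrinsic Laplacian plus explicit lower-order error terms. The plan is to evaluate both sides at a point $p \in \Sm_{t_0}$ in an adapted orthonormal frame that simplifies as many Christoffel/normal-connection terms as possible at $p$, so that everything that remains is expressible in terms of $h_{\af ij}$, $\nabla\Om$, $\nabla^2\Om$, and the ambient curvature $R$.

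First I would fix an adapted orthonormal frame $\{e_i\}_{i=1}^n \cup \{e_\af\}_{\af=n+1}^{n+m}$ along the flow near $p$, chosen so that at $(p, t_0)$ the tangential frame $\{e_i\}$ is induced by $\Sm_{t_0}$-geodesic normal coordinates centered at $p$ (hence the tangential part of $\nabla^{\Sm}_{e_j} e_i$ vanishes at $p$) and $\{e_\af\}$ is parallel in the normal bundle at $p$ (so $(\nabla^\perp_{e_j} e_\af)(p) = 0$). Under this choice, by Gauss and Weingarten, $\nabla^M_{e_k} e_i\big|_p = h_{\af k i}\, e_\af$ and $\nabla^M_{e_k} e_\af\big|_p = -h_{\af k i}\, e_i$, so every differentiation of a frame vector reduces at $p$ to a clean second-fundamental-form contraction.

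Next I would expand the Laplacian. Writing $*\Om = \Om(e_1,\ldots,e_n)$ and applying $\nabla_{e_k}$ twice, the product rule yields at $p$ four groups of terms: (a) the $(\nabla^2_{e_k,e_k}\Om)(e_1,\ldots,e_n)$ term; (b) cross terms $2\sum_{\af,i,k}(\nabla_{e_k}\Om)(e_1,\ldots,e_\af,\ldots,e_n)\, h_{\af i k}$ coming from one derivative on $\Om$ and one on a frame vector; (c) quadratic-in-$h$ terms of two kinds, namely $\sum_{\af,i,k} h_{\af i k}^2\,\Om(e_1,\ldots,e_n)$ from hitting the same slot twice (the Weingarten formula converts $\nabla^M_{e_k} e_\af$ back to $-h_{\af ki}e_i$), and the off-diagonal terms $\sum \Om_{\af\bt 3\cdots n}\, h_{\af 1 k}\, h_{\bt 2 k} + \cdots$ coming from hitting two distinct slots; and (d) a genuine ambient-curvature contribution obtained by applying the Ricci identity to commute two covariant derivatives landing on the same frame vector, producing exactly the sum $\sum_{\af,k} \Om_{\af 2\cdots n} R_{\af k k 1} + \cdots$. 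The intrinsic Laplacian of $*\Om$ differs from this trace of the $M$-Hessian only by $\nabla^{\Sm}_{e_j} e_i$ terms, which vanish at $p$ by choice of frame; this identifies the computed trace with $\Delta^{\Sm_t} *\Om$ at $p$.

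Then I would compute the time derivative. Along the MCF, $\partial_t F = H = H^\af e_\af$, so at $(p,t_0)$ the adapted frame satisfies $\partial_t e_i = \nabla^M_{e_i} H + (\text{tangential and skew frame-maintenance terms that drop out by the frame choice})$. Differentiating $\Om(e_1,\ldots,e_n)$ in $t$, one contribution is $(\nabla_H \Om)(e_1,\ldots,e_n)$ and the rest are $\sum_i \Om(e_1,\ldots,\nabla^M_{e_i} H,\ldots,e_n)$; using Weingarten $\nabla^M_{e_i} H = (\nabla^\perp_{e_i} H) - H^\af h_{\af i j} e_j$ and the Codazzi identity to commute the normal derivative of $H$ through, these terms cancel precisely those pieces of the Laplacian expansion \emph{not} already listed in the statement, leaving exactly the five line-items in \eqref{*Omega}. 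The main obstacle is the bookkeeping in (c): one must verify that the coefficient $-2$ in front of each $\Om_{\af\bt\cdots}\, h_{\af\cdot k}\, h_{\bt\cdot k}$ term emerges correctly as the net of the off-diagonal contribution from $\Delta^{\Sm_t}$ and the contribution from $\partial_t e_i$ through Weingarten, and that the signs align with the convention \eqref{R_curv} for the ambient Riemann tensor so that the final curvature sum reads $\sum \Om_{\af 2\cdots n} R_{\af k k 1} + \cdots$ rather than its negative.
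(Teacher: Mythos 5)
Your outline is the standard derivation and is structurally sound, but note that the paper does not actually prove this proposition: it is quoted verbatim from \cite{ref_W4}*{Proposition 3.1}, and the only original content here is the Remark explaining how the last term of \eqref{*Omega} is to be read in a geodesic frame. Your sketch reproduces the proof given in that reference --- adapted frame with $\nabla^{\Sm}_{e_j}e_i=0$ at $p$, product-rule expansion of $\sum_k\nabla_{e_k}\nabla_{e_k}(\Om(e_1,\dots,e_n))$ into the $\nabla^2\Om$, cross, quadratic-in-$h$, and $\nabla h$ pieces, and comparison with $\tfrac{\dd}{\dd t}*\Om$ via $\partial_t F=H$, Weingarten, and Codazzi. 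Two points deserve more care than your sketch gives them. First, the entire curvature line and the matching of line items hinge on the Codazzi identity converting $\sum_k(\nabla_{e_k}h)_{\af ki}$ into $\nabla^\perp_{e_i}H^\af$ plus $\sum_k R_{\af kki}$; this is the crux and is asserted rather than carried out, and the sign must be checked against the convention \eqref{R_curv}. Second, your bookkeeping in (c) slightly misattributes the sources: after the $|H|^2*\Om$ cancellation, the time derivative contributes no terms quadratic in $h$, so both $*\Om\sum h_{\af ik}^2$ and the $-2\sum\Om_{\af\bt 3\cdots n}h_{\af 1k}h_{\bt 2k}$ terms come entirely from transposing the Laplacian expansion, not from a net with $\partial_t e_i$. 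Finally, the term $(\nabla_H\Om)(e_1,\dots,e_n)$ produced by the time derivative does not cancel; it is absorbed into $-(\nabla^2_{e_k,e_k}\Om)(e_1,\dots,e_n)$ exactly as the paper's Remark following the proposition explains, and your write-up should make that explicit.
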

When $\Omega$ is a parallel form in $M$, $\nabla\Omega \equiv 0 $, this recovers an important formula in proving the long time existence result of the graphical mean 
curvature flow in \cite{ref_W3}.

\begin{rmk}
In \cite{ref_W4}, the frame $\{e_k\}_{k=1}^n$ is a geodesic frame at some $p\in M$, i.e.\ $\nabla^\Sm_{e_j}e_i$ vanishes at $p$.  Thus, the last term of \eqref{*Omega} is
$$ \left(\nabla^2_{e_k,e_k} \Om\right)(e_1,\cdots,e_n) = -(\nabla_{e_k}\nabla_{e_k}\Om)(e_1,\cdots,e_n) + (\nabla_H\Om)(e_1,\cdots,e_n)  $$
at $p\in M$.  This is exactly the formula in \cite{ref_W4}*{Proposition 3.1}.
\end{rmk}

\subsection{Proof of Theorem \ref{stable} for the Stenzel metric}
\begin{proof} We deal with the Stenzel metric first. Let $\epsilon$ be the constant to be determined and $\Sigma$ be a compact submanifold of $M$ that satisfies the assumption \eqref{condition0}. Throughout the proof, $K_i, i=0, 1, 2, \cdots$ denotes a positive constant that depends only on the dimension $n$. 
Denote by $\Sigma_t$ the mean curvature flow in $M$ with $\Sigma$ as the initial data.

We first prove the $\CC^0$ estimate.  As in the proof of Theorem \ref{unique_Stenzel}, let $\psi$ be the distance square to the zero section with respect to the Stenzel metric, or the square of the $\rho$ coordinate.  Its evolution equation along the mean curvature flow $\Sigma_t$ reads (see the proof of Theorem C in \cite{ref_W2})
\begin{align} \label{t_psi}
\frac{\dd}{\dd t} \psi &= \Delta^{\Sm_t} \psi - \tr_{\Sm_t}  \Hess \psi ~,
\end{align}
where
$\tr_{\Sm_t} \Hess \psi$ is the trace of the Hessian of $\psi$ over $\Sm_t$ and is always non-negative by Theorem \ref{unique_Stenzel}.  By the maximum principle, the maximum of $\psi$ on $\Sigma_t$ is non-increasing, and thus $\Sigma_t$ remains close to the zero section along the flow.

\

Next, we derive the $\CC^1$ estimate which amounts to showing that $*\Om=\Om(T_p\Sm_t)$ remains close to one. We claim that there exists a small enough $\ep\in(0,\oh)$ and a large enough $K_0>1$, both depending only on the dimension $n$, such that if the inequality $*\Om-K_0\psi>1-\ep$ holds on the initial data, then it remains true on $\Sigma_t$ at any subsequent time as long as the flow exists smoothly.

For the argument of contradiction, suppose $*\Omega-K_0\psi >1-\epsilon$ holds initially, and $*\Omega-K_0\psi =1-\epsilon$ for the first time at $T_0$. Our goal is to apply equation \eqref{*Omega} and the estimates in section \ref{sc_Stenzel_est} to show that for certain $\epsilon$ and $K_0$,
\begin{align} \label{*Omega3}
\frac{\dd}{\dd t}(*\Om - K_0\psi) \geq \Delta^{\Sm_t}(*\Om-K_0\psi) + \frac{1}{2}(*\Om-K_0\psi) |\CA|^2 ~.
\end{align}
where $|\CA|^2= \sum_{\alpha ,i,k}h_{\alpha i k}^2$.   Therefore, the minimum of $*\Omega-K_0\psi $ is non-decreasing in the interval $[0, T_0)$ and $*\Om - K_0\psi $ is indeed strictly greater than $1-\epsilon$ at $T_0$.

By \eqref{*Omega} and \eqref{t_psi}, the evolution equation of $*\Om - K_0\psi$ is:
\begin{align} \begin{split}
&\frac{\dd}{\dd t}(*\Om - K_0\psi) - \Delta^{\Sm_t}(*\Om - K_0\psi) \\
=\, &*\Omega
(\sum_{\alpha ,i,k}h_{\alpha i k}^2)-2\sum_{\alpha, \beta, k}[\Omega_{\alpha \beta 3\cdots n}
h_{\alpha 1k}h_{\beta 2k} + \Omega_{\alpha 2 \beta\cdots n}
h_{\alpha 1k}h_{\beta 3k} +\cdots +\Omega_{1\cdots (n-2) \alpha
\beta} h_{\alpha (n-1)k}
h_{\beta nk}]\\
& - 2(\nabla_{e_k}\Omega) (e_\alpha, \cdots, e_{n})h_{\alpha 1 k}
-\cdots-2(\nabla_{e_k}\Omega) (e_{1}, \cdots, e_\alpha
)h_{\alpha n k}\\
& - \sum_{\alpha, k}[\Omega_{\alpha 2 \cdots n}R_{\alpha kk1}
+\cdots +\Omega_{1\cdots (n-1)\alpha}R_{\alpha kkn}]-(\nabla^2_{e_k,e_k}\Omega)(e_{1},
\cdots,e_{n})+K_0\tr_{\Sigma_t}  \Hess \psi ~.
\end{split} \label{*Omega2} \end{align}
We aim at using the first and last term on the right hand side of \eqref{*Omega2} to control the rest of the terms.  At any $p\in\Sigma_t$, $T_p\Sm$ and $(T_p\Sm)^\perp$ have the following orthonormal bases constructed in section \ref{sc_linear}:
\begin{align*}
\{e_j = \cos\ta_j u_j + \sin\ta_j v_j\} \quad\text{and}\quad \{e_{n+j} = -\sin\ta_j u_j + \cos\ta_j v_j\} ~,
\end{align*}
respectively.  Recall that $\{ u_j, v_j\}_{j=1,\cdots n, }$ is an orthonormal basis for $T_p M$ such that $\om^i(v_j) = \om^{n+i}(u_j)=0$ for any $i, j$, and $[\om^i(u_j)]_{i,j}$ and $[\om^{n+i}(v_j)]_{i,j}$ are both $n\times n$ orthogonal matrices.  As in \eqref{linear2}, set $\fs$ to be $\max_j\{|\sin\ta_j|\}$.

It follows from $*\Om - K_0\psi > 1-\ep$ that $\psi = \rho^2 < \ep/K_0 < 1$.  In what follows, the point $p$ is always assumed to be at distance less than $1$ from the zero section.  It also follows from $*\Om - K_0\psi > 1-\ep$ that $*\Om = \Om(e_1,\cdots,e_n) = \prod_{j=1}^n\cos\ta_j > 1-\ep$.  Hence, $\cos\ta_j > 1-\ep > \oh$ and $\sin^2\ta_j < 2\ep$ for any $j\in\{1,\ldots,n\}$, and $\fs < 2 \ep^\oh$.

We now analyze the terms on the right hand side of \eqref{*Omega2}
\begin{enumerate}
\item By \eqref{Hess0} and Lemma \ref{lem_est1}, $\Hess\psi\geq \frac{1}{K_1}\sum_{j=1}^n(\rho^2\,\om^j\ot\om^j + \om^{n+j}\ot\om^{n+j})$ for some constant $K_1>0$.  Since $\cos\ta_j > \oh$ and $\sum_{j=1}^n\sin^2\ta_j\geq\max_j\{\sin^2\ta_j\} = \fs^2$,
\begin{align}\label{Hess}
\tr_{\Sm_t}\Hess \psi &\geq \frac{1}{K_1}(\rho^2\sum_{j=1}^n\cos^2\ta_j + \sum_{j=1}^n\sin^2\ta_j) > \frac{1}{K_1}(\frac{1}{4}\rho^2 + \fs^2) ~.
\end{align}

\item With the second line of \eqref{linear1} and the Cauchy--Schwarz inequality, $|\Omega_{\af\bt 3\cdots n}h_{\af 1k}h_{\bt 2k}|$ is bounded by $\fs^2|\CA|^2$.  Thus,
\begin{align*}
\left|2\sum_{\af, \bt, k} (\Om_{\af \bt 3\cdots n}
h_{\af 1k}h_{\bt 2k} + \cdots + \Om_{1\cdots (n-2) \af \bt} h_{\af (n-1)k}h_{\bt nk}) \right| \leq K_2\,\fs^2|\CA|^2
\end{align*}
for some constant $K_2 > 0$.

\item Due to the first assertion of Lemma \ref{lem_Om_est},
\begin{align*}
2\left| (\nabla_{e_k} \Om) (e_\af, \cdots, e_{n})h_{\af 1 k} + \cdots + (\nabla_{e_k} \Om) (e_{1}, \cdots, e_\af)h_{\af n k} \right| < K_3 \rho |\CA| \leq K_3^2\rho^2 + \frac{1}{4}|\CA|^2
\end{align*}
for some constant $K_3>0$.

\item According to the curvature computation in section \ref{sc_Stenzel_cruv}, the Riemann curvature tensor of the Stenzel metric satisfies $R(u_i,v_j,v_k,v_l) = 0 = R(v_i,u_j,u_k,u_l)$ for any $i,j,k,l\in\{1,\ldots,n\}$.  It follows that
\begin{align}
|R(e_{n+j},e_k,e_k,e_i)|\leq K_4(|\sin\ta_j| + |\sin\ta_k| + |\sin\ta_i|) \leq 3K_4\fs
\label{est_mixed} \end{align}
for some constant $K_4>0$.  This together with the first line of \eqref{linear1} implies that
\begin{align*}
\left| \sum_{\af, k}(\Om_{\af 2 \cdots n}R_{\af kk1} + \cdots + \Om_{1\cdots (n-1)\af}R_{\af kkn}) \right| \leq 3n^2K_4\,\fs^2 ~.
\end{align*}

\item By Lemma \ref{lem_Om_est}, there exists a constant $K_5>0$ such that
\begin{align*}
-\sum_{k=1}^n (\nabla^2_{e_k,e_k}\Om)(e_1,\cdots,e_n) &\geq -K_5\fs^2 ~.
\end{align*}
\end{enumerate}
It follows that the right hand side of \eqref{*Omega2} is greater than
\begin{align*}
*\Om|\CA|^2 + \frac{K_0}{K_1}(\frac{1}{4}\rho^2 + \fs^2) - K_2\fs^2|\CA|^2 - K_3^2\rho^2 - \frac{1}{4}|\CA|^2 - (3n^2K_4 + K_5)\,\fs^2 ~.
\end{align*}
It is clear that by taking $\ep$ to be sufficiently small and $K_0$ to be sufficiently large, the expression is greater than $\oh*\Om|\CA|^2 \geq \oh(*\Om - K_0\psi)|\CA|^2$.  This proves the differential inequality \eqref{*Omega3}.

\

Finally, we prove the $\CC^2$ estimate, which amounts to bounding the norm of the second fundamental form from above. The evolution equation for the norm of the second fundamental
form for a mean curvature flow in a general Riemannian manifold is derived in  \cite{ref_W1}*{Proposition 7.1}.
In particular, $|\CA|^2 = \sum_{\af, i, k} h_{\af i k}^2$ satisfies the following
equation along the flow:
\begin{align} \label{|A|^2} \begin{split}
\frac{\dd}{\dd t}|\CA|^2
&=\Delta^{\Sm_t} |\CA|^2 -2|\nabla^{\Sm_t} \CA|^2
+2[(\nabla_{e_k} R)_{\alpha ijk}
+(\nabla_{e_j} R)_{\alpha kik}]h_{\alpha ij}\\
&-4R_{lijk}h_{\alpha lk}h_{\alpha ij}
+8R_{\alpha \beta jk}h_{\beta ik}h_{\alpha ij}
-4R_{lkik}h_{\alpha lj}h_{\alpha ij}
+2R_{\alpha k\beta k}h_{\beta ij}h_{\alpha ij}\\
&+2\sum_{\alpha,\gamma, i,m}
(\sum_k h_{\alpha ik}h_{\gamma mk}
-h_{\alpha mk}h_{\gamma ik})^2
+2\sum_{i,j,m,k}(\sum_{\alpha} h_{\alpha ij}
h_{\alpha mk})^2 ~.
\end{split} \end{align}

In particular, we have
\begin{align*}
\frac{\dd}{\dd t}|\CA|^2 &\leq \Delta^{\Sm_t} |\CA|^2 - 2|\nabla^{\Sm_t} \CA|^2 + K_6|\CA|^4 + K_7 |\CA|^2 + K_8 |\CA| \\
&\leq \Delta^{\Sm_t} |\CA|^2 - 2|\nabla^{\Sm_t} \CA|^2 + K_6|\CA|^4 + (K_7 + \oh{K_8} )|\CA|^2 + \oh{K_8} ~,
\end{align*}
where $K_6, K_7$ and $K_8$ are positive constant that only depend on the geometry of $M$. Combining this with equation \eqref{*Omega2} 
and applying the method in \cite{ref_W3}*{p.540--542} yield the boundedness of the second fundamental form. The term $K_6|A|^4$ on the right
hand side, which can potentially lead to the finite time blow-up of $|\CA|^2$, is countered by the term $\frac{1}{2}(*\Om-K_0\psi) |\CA|^2$ on the right hand side of \eqref{*Omega3}.  Standard estimates for second order quasilinear parabolic system imply all higher derivatives are bounded, and we can apply Simon's convergence theorem \cite{ref_Simon} to conclude the smooth convergence as $t\rightarrow \infty$. In fact, in this case the smooth convergence can be proved directly by considering the derivatives of the second fundamental form. It follows from \eqref{Hess} and \eqref{t_psi} that $\psi|_{\Sm_t}$ converges exponentially to zero.
Similarly, it follows from \eqref{*Omega3} that $(1-*\Om + K_0\psi)|_{\Sm_t}$ converges to zero, and thus $*\Om$ converges to $1$.

\end{proof}

\subsection{Proof of Theorem \ref{stable} for the other cases}

The proofs for the Calabi metric and the Bryant--Salamon metric are almost the same as above.  We just highlight where it needs to be modified.  For the Calabi metric, the function $\psi$ is taken to be the distance square to the zero section, $(\int_0^r\sqrt{\cosh(2u)}\dd u)^2$.  For the Bryant--Salamon metric, the function $\psi$ is taken to be $s = \sum_{\mu}(y^\mu)^2$.

First of all, due to Theorem \ref{unique_Calabi} and \ref{unique_BS}, the $\CC^0$ estimate follows from the same argument.  Moreover, as can be seen in their proofs, there exists a constant $K_8>0$ such that
\begin{align*}
\Hess(\psi) > \frac{1}{K_8}(\psi + \fs^2)
\end{align*}
provided $\psi\leq1$.  This is item (i) in the proof of the $\CC^1$ estimate.  For the rest of the items,
\begin{itemize}
\item one simply has to replace Lemma \ref{lem_Om_est} by Lemma \ref{lem_Om_est_hk} and \ref{lem_Om_est_BS}, respectively;
\item according to the curvature computation in section \ref{sc_Calabi_curv} and appendix \ref{sc_bd_Rcurv}, their Riemann curvatures also admit the property that $R(\CH,\CV,\CV,\CV) = 0 = R(\CV,\CH,\CH,\CH)$.
\end{itemize}

For $\CC^2$ and all higher derivative estimates, the argument is completely the same.

\begin{appendix}

\section{The curvature of Bryant--Salamon manifolds}

This appendix is a brief summary on the curvature properties of the Bryant--Salamon manifolds.  For these bundle manifolds, the isometry group of the base acts transitively, and the action lifts to the total space of the vector bundle isometrically.  Hence, it suffices to examine the curvature at a particular fiber.

Fix a point $p$ in the base, and let $\ul{\om}^j$ be a geodesic frame at $p$.  Thus, at $p$, the Levi-Civita connection forms of the base metric vanish, $\ul{\om}_i^j|_p = 0$.  Since the bundle is either the spinor bundle or the bundle of anti-self-dual $2$-forms,  $\{\ul{\om}^j\}$ induces an orthonormal trivialization $\{\fs_\nu\}$ of the bundle by representation theory.  In other words, the bundle connection $A_\nu^\mu$ is a linear combination of $\ul{\om}_i^j$, and also vanishes at $p$.  It follows that $(\dd\ul{\om}_i^j)|_p = \ul{\FR}_i^j|_p$ and $(\dd A_\nu^\mu)|_p = F_\nu^\mu|_p$.

\subsection{The bundle curvature of the connection}\label{sc_bd_Fcurv}
In order to use \eqref{bd_conn0}, \eqref{bd_conn1} and \eqref{bd_conn2} to compute the curvature of \eqref{bd_metric0} over the fiber at $p$, we also need to know the exterior derivative of $F^\mu_{\nu\,ij}$.  They are locally defined functions on the base.  Since the metric here is the round metric or the Fubini-Study metric, these locally defined functions are actually locally constants.  The readers are directed to \cite{ref_BS} for the detail of the curvature computation.  We simply write down the answer.

For $\BS(S^3)$,
\begin{align}
F &= \frac{\kp}{2}\,\begin{bmatrix}
0 &-\ul{\om}^2\w\ul{\om}^3 &\ul{\om}^1\w\ul{\om}^3 &-\ul{\om}^1\w\ul{\om}^2 \\
\ul{\om}^2\w\ul{\om}^3 &0 &\ul{\om}^1\w\ul{\om}^2 &\ul{\om}^1\w\ul{\om}^3 \\
-\ul{\om}^1\w\ul{\om}^3 &-\ul{\om}^1\w\ul{\om}^2 &0 &\ul{\om}^2\w\ul{\om}^3 \\
\ul{\om}^1\w\ul{\om}^2 &-\ul{\om}^1\w\ul{\om}^3 &-\ul{\om}^2\w\ul{\om}^3 &0
\end{bmatrix} ~.
\label{bd_curv1} \end{align}
It is understood as an endomorphism-valued $2$-form with respect to the trivialization $\{\fs_\nu\}$.  The components of the curvature can be read off from the matrix.  For instance, $F^2_{4\,13} = \kp/2$.

For $\Lambda^2_-(S^4)$ and $\Lambda^2_-(\BCP^2)$,
\begin{align}
F&= \kp\,\begin{bmatrix}
0 &-\ul{\om}^1\w\ul{\om}^4 + \ul{\om}^2\w\ul{\om}^3 &\ul{\om}^1\w\ul{\om}^3 + \ul{\om}^2\w\ul{\om}^4 \\
\ul{\om}^1\w\ul{\om}^4 - \ul{\om}^2\w\ul{\om}^3 &0 &-\ul{\om}^1\w\ul{\om}^2 + \ul{\om}^3\w\ul{\om}^4 \\
-\ul{\om}^1\w\ul{\om}^3 - \ul{\om}^2\w\ul{\om}^4 &\ul{\om}^1\w\ul{\om}^2 - \ul{\om}^3\w\ul{\om}^4 & 0
\end{bmatrix} ~.
\label{bd_curv2} \end{align}
For $\BS_-(S^4)$,
\begin{align}
F&= \frac{\kp}{2}\,\begin{bmatrix}
0 &\ul{\om}^1\w\ul{\om}^2 - \ul{\om}^3\w\ul{\om}^4 &\ul{\om}^1\w\ul{\om}^3 + \ul{\om}^2\w\ul{\om}^4 & \ul{\om}^1\w\ul{\om}^4 - \ul{\om}^2\w\ul{\om}^3 \\
-\ul{\om}^1\w\ul{\om}^2 + \ul{\om}^3\w\ul{\om}^4&0 &-\ul{\om}^1\w\ul{\om}^4 + \ul{\om}^2\w\ul{\om}^3 &\ul{\om}^1\w\ul{\om}^3 + \ul{\om}^2\w\ul{\om}^4 \\
-\ul{\om}^1\w\ul{\om}^3 - \ul{\om}^2\w\ul{\om}^4&\ul{\om}^1\w\ul{\om}^4 - \ul{\om}^2\w\ul{\om}^3 &0 &-\ul{\om}^1\w\ul{\om}^2 + \ul{\om}^3\w\ul{\om}^4 \\
-\ul{\om}^1\w\ul{\om}^4 + \ul{\om}^2\w\ul{\om}^3&-\ul{\om}^1\w\ul{\om}^3 - \ul{\om}^2\w\ul{\om}^4 &\ul{\om}^1\w\ul{\om}^2 - \ul{\om}^3\w\ul{\om}^4 & 0
\end{bmatrix} ~.
\label{bd_curv3} \end{align}

By \eqref{bd_cov2}, it is clear that $\nabla_A F$ vanishes at $p$.  Since the argument applies to any $p\in B$, $\nabla_A F \equiv0$.

\subsection{The Riemann curvature tensor of the bundle metric}\label{sc_bd_Rcurv}
We are now ready to compute the curvature of Bryant--Salamon metric \eqref{bd_metric0} over the fiber at $p$.  The notation $|_p$ is abused to indicate the restriction to the fiber at $p$ in the following calculations.  The Levi-Civita connection of \eqref{bd_metric0} is discussed in section \ref{sc_BS}.

By \eqref{bd_conn0}, \eqref{bd_dom}, \eqref{bd_eq0} and \eqref{bd_relation2},
\begin{align*}
(\dd\om_i^j)|_p &= \ul{\FR}_i^j - \frac{\bt^2}{4\af^4}y^\nu F^\nu_{\mu\,ij}F^\mu_{\gm\,kl}y^\gm\,\om^k\w\om^l + \frac{1}{2\af^2}F^\mu_{\nu\,ij}\,\om^{n+\nu}\w\om^{n+\mu}  \\
&\quad - \frac{2\bt^2}{\af^4}(\kp_1+\kp_2) F^\mu_{\nu\,ij} y^\nu y^\gm\,\om^{n+\gm}\w\om^{n+\mu} ~, \\
(-\om_i^k\w\om_k^j)|_p &= \frac{\bt^2}{4\af^4} F^\mu_{\nu\,ik}y^\nu\,\om^{n+\mu}\w F^\eta_{\gm\,jk}y^\gm\,\om^{n+\eta} ~, \\
(-\om_i^{n+\mu}\w\om_{n+\mu}^j)|_p &= \frac{4\bt^2}{\af^4}\kp_1^2s\,\om^i\w\om^j - \frac{\bt^2}{4\af^4}y^\nu F^\nu_{\mu\,ik} F^\mu_{\gm\,jl}y^\gm\,\om^k\w\om^l ~.
\end{align*}
Thus, by \eqref{R_curv2}, we have
\begin{align}
\begin{split}  R_{jikl} &= \frac{1}{\af^2} \ul{R}_{jikl} - \frac{4\bt^2}{\af^4}\kp_1^2s\,(\dt_{jk}\dt_{il} - \dt_{jl}\dt_{ik}) \\
&\quad - \frac{\bt^2}{4\af^4} y^\nu (2F^\nu_{\mu\,ij}F^\mu_{\gm\,kl} + F^\nu_{\mu\,ik}F^\mu_{\gm\,jl} - F^\nu_{\mu\,il}F^\mu_{\gm\,jk}) y^\gm ~, \end{split} \label{bd_curv4}\\
\begin{split}  R_{ji(n+\mu)(n+\nu)} &= -\frac{1}{\af^2}F^\mu_{\nu\,ij} + \frac{2\bt^2}{\af^4}(\kp_1+\kp_2)(y^\nu F^\mu_{\gm\,ij} - y^\mu F^\nu_{\gm\,ij})y^\gm \\
&\quad + \frac{\bt^2}{4\af^4} y^\gm(F^\mu_{\gm\,ik}F^\nu_{\eta\,jk} - F^\nu_{\gm\,ik}F^\mu_{\eta\,jk})y^\eta ~.  \end{split} \label{bd_curv5}
\end{align}
In the above expression, $\ul{R}_{jikl} = R(\ul{e}_j,\ul{e}_i,\ul{e}_k,\ul{e}_l)$ is the Riemann curvature tensor of $(B,\ul{g})$, where $\{\ul{e}_j\}$ is the dual frame of $\{\ul{\om}^j\}$.

For the curvature component $\FR_i^{n+\mu}$,
\begin{align*}
(\dd\om_i^{n+\mu})|_p &= -\frac{2}{\af^2}\kp_1\,\om^{n+\mu}\w\om^i + \frac{1}{2\af^2}F^\mu_{\nu\,ij}\,\om^{n+\nu}\w\om^j + \frac{4\bt^2}{\af^4}\kp_1(\kp_1+\kp_2)y^\mu y^\gm\,\om^{n+\gm}\w\om^i \\
&\quad - \frac{\bt^2}{\af^4}(\kp_1+\kp_2)F^\mu_{\gm\,ij}y^\gm y^\nu\,\om^{n+\nu}\w\om^j ~, \\
(-\om_i^j\w\om_j^{n+\mu})|_p &= \frac{\bt^2}{\af^4}\kp_1 F^\nu_{\gm\,ij} y^\gm y^\mu\,\om^{n+\nu}\w\om^j + \frac{\bt^2}{4\af^4} F^\nu_{\gm\,ik}y^\gm F^\mu_{\eta\,jk}y^\eta\,\om^{n+\nu}\w\om^j~, \\
(-\om_i^{n+\nu}\w\om_{n+\nu}^{n+\mu})|_p &= \frac{4\bt^2}{\af^4}\kp_1\kp_2s\,\om^{n+\mu}\w\om^i - \frac{4\bt^2}{\af^4}\kp_1\kp_2y^\mu y^\nu\,\om^{n+\nu}\w\om^i + \frac{\bt^2}{\af^4}\kp_2 F^\nu_{\gm\,ij}y^\gm y^\mu\,\om^{n+\nu}\w\om^j ~,
\end{align*}
and then
\begin{align} \begin{split}
R_{(n+\mu)i(n+\nu)j} &= -\left(\frac{2}{\af^2}\kp_1 - \frac{4\bt^2}{\af^4}\kp_1\kp_2s\right)\dt_{\mu\nu}\dt_{ij} + \frac{1}{2\af^2}F^\mu_{\nu\,ij} + \frac{4\bt^2}{\af^4}\kp_1^2y^\mu y^\nu\dt_{ij} \\
&\quad +\frac{\bt^2}{\af^4}(\kp_1+\kp_2)(y^\mu F^\nu_{\gm\,ij} - y^\nu F^\mu_{\gm\,ij})y^\gm + \frac{\bt^2}{4\af^4}y^\gm F^\nu_{\gm\,ik}F^\mu_{\eta\,jk}y^\eta ~.
\end{split} \label{bd_curv6} \end{align}

For the curvature component $\FR_{n+\nu}^{n+\mu}$,
\begin{align*}
(\dd\om_{n+\nu}^{n+\mu})|_p &= \frac{1}{2\af^2}F^\mu_{\nu\,ij}\,\om^i\w\om^j + \frac{\bt^2}{\af^4}\kp_2(y^\mu F^\nu_{\gm\,ij} - y^\nu F^\mu_{\gm\,ij})y^\gm\,\om^i\w\om^j \\
&\quad + \frac{4}{\af^2}\kp_2\,\om^{n+\mu}\w\om^{n+\nu} - \frac{8\bt^2}{\af^4}\kp_2(\kp_1+\kp_2)y^\gm\,\om^{n+\gm}\w(y^\mu\,\om^{n+\nu} - y^\nu\,\om^{n+\mu}) ~, \\
(-\om_{n+\nu}^i\w\om_i^{n+\mu})|_p &= \frac{\bt^2}{4\af^4}y^\gm F^\nu_{\gm\,ik}F^\mu_{\eta\,jk}y^\eta\,\om^i\w\om^j - \kp_1\frac{\bt^2}{\af^4}(y^\nu F^\mu_{\gm\,ij} - y^\mu F^\nu_{\gm\,ij})y^\gm\,\om^i\w\om^j ~, \\
(-\om_{n+\nu}^{n+\gm}\w\om_{n+\gm}^{n+\mu})|_p &= \frac{4\bt^2}{\af^4}\kp_2^2 \left( s\,\om^{n+\nu}\w\om^{n+\mu} - y^\nu y^\gm\,\om^{n+\gm}\w\om^{n+\mu} - y^\mu y^\gm\,\om^{n+\nu}\w\om^{n+\gm} \right) ~,
\end{align*}
and
\begin{align} \begin{split}
R_{(n+\mu)(n+\nu)(n+\gm)(n+\eta)} &= \left(\frac{4}{\af^2}\kp_2 - \frac{4\bt^2}{\af^4}\kp_2^2s \right)(\dt_{\mu\gm}\dt_{\nu\eta} - \dt_{\mu\eta}\dt_{\nu\gm}) \\
&\quad + \frac{4\bt^2}{\af^4}\kp_2(2\kp_1 + \kp_2)(y^\nu y^\gm\dt_{\mu\eta} - y^\nu y^\eta\dt_{\mu\gm} + y^\mu y^\eta\dt_{\nu\gm} - y^\mu y^\gm\dt_{\eta\nu}) ~.
\end{split} \label{bd_curv7} \end{align}

\end{appendix}

\begin{bibdiv}
\begin{biblist}

\bib{ref_BS}{article}{
   author={Bryant, Robert L.},
   author={Salamon, Simon M.},
   title={On the construction of some complete metrics with exceptional holonomy},
   journal={Duke Math. J.},
   volume={58},
   date={1989},
   number={3},
   pages={829--850},
}
\bib{ref_BX}{article}{
   author={Bryant, Robert L.},
   author={Xu, Feng},
   title={Laplacian flow for closed $G_2$ structures: short time behavior},
   journal={arXiv:1101.2004},
   volume={},
   date={},
   number={},
   pages={},
}

\bib{ref_Calabi}{article}{
   author={Calabi, E.},
   title={M\'etriques k\"ahl\'eriennes et fibr\'es holomorphes},
   language={French},
   journal={Ann. Sci. \'Ecole Norm. Sup. (4)},
   volume={12},
   date={1979},
   number={2},
   pages={269--294},
}

\bib{ref_CO}{article}{
   author={Candelas, Philip},
   author={de la Ossa, Xenia C.},
   title={Comments on conifolds},
   journal={Nuclear Phys. B},
   volume={342},
   date={1990},
   number={1},
   pages={246--268},
}

\bib{ref_Chern}{book}{
   author={Chern, Shiing-Shen},
   title={Complex manifolds without potential theory},
   edition={2},
   publisher={Springer-Verlag, New York-Heidelberg},
   date={1979},
   pages={iii+152},
}

\bib{ref_CGLP1}{article}{
   author={Cveti{\v{c}}, M.},
   author={Gibbons, G. W.},
   author={L{\"u}, H.},
   author={Pope, C. N.},
   title={Hyper-K\"ahler Calabi metrics, $L^2$ harmonic forms, resolved M2-branes, and ${\rm AdS}_4/{\rm CFT}_3$ correspondence},
   journal={Nuclear Phys. B},
   volume={617},
   date={2001},
   number={1-3},
   pages={151--197},
}

\bib{ref_CGLP}{article}{
   author={Cveti{\v{c}}, M.},
   author={Gibbons, G. W.},
   author={L{\"u}, H.},
   author={Pope, C. N.},
   title={Ricci-flat metrics, harmonic forms and brane resolutions},
   journal={Comm. Math. Phys.},
   volume={232},
   date={2003},
   number={3},
   pages={457--500},
}

\bib{ref_DS}{article}{
   author={Dancer, Andrew},
   author={Swann, Andrew},
   title={Hyper-K\"ahler metrics of cohomogeneity one},
   journal={J. Geom. Phys.},
   volume={21},
   date={1997},
   number={3},
   pages={218--230},
}

\bib{ref_HL}{article}{
   author={Harvey, Reese},
   author={Lawson, H. Blaine, Jr.},
   title={Calibrated geometries},
   journal={Acta Math.},
   volume={148},
   date={1982},
   pages={47--157},
}

\bib{ref_IKM}{article}{
   author={Ionel, Marianty},
   author={Karigiannis, Spiro},
   author={Min-Oo, Maung},
   title={Bundle constructions of calibrated submanifolds in ${\BR}^7$
   and ${\BR}^8$},
   journal={Math. Res. Lett.},
   volume={12},
   date={2005},
   number={4},
   pages={493--512},
}

\bib{ref_Joyce}{book}{
   author={Joyce, Dominic},
    title={Compact manifolds with special holonomy},
   edition={},
   publisher={Oxford University Press},
   date={2000},
   pages={xii+436},
}

\bib{ref_Joyce3}{article}{
   author={Joyce, Dominic},
   title={U(1)-invariant special Lagrangian 3-folds in $\BC^3$ and
   special Lagrangian fibrations},
   journal={Turkish J. Math.},
   volume={27},
   date={2003},
   number={1},
   pages={99--114},
}

\bib{ref_Joyce4}{article}{
   author={Joyce, Dominic},
   title={Special Lagrangian submanifolds with isolated conical
   singularities. V. Survey and applications},
   journal={J. Differential Geom.},
   volume={63},
   date={2003},
   number={2},
   pages={279--347},
}

\bib{ref_Joyce2}{article}{
   author={Joyce, Dominic},
    title={Lectures on special Lagrangian geometry},
    journal={Clay Math. Proc.},
    date={2005},
   number={2},
   pages={667--695},
}

\bib{ref_KM}{article}{
   author={Karigiannis, Spiro},
   author={Min-Oo, Maung},
   title={Calibrated subbundles in noncompact manifolds of special holonomy},
   journal={Ann. Global Anal. Geom.},
   volume={28},
   date={2005},
   number={4},
   pages={371--394},
}

\bib{ref_LO}{article}{
   author={Lawson, H. B., Jr.},
   author= {Osserman, R.},
   title={Non-existence, non-uniqueness and irregularity of solutions to the minimal surface system},
   journal= {Acta Math.},
   volume={139},
   date={1977},
   number={no. 1-2},
   pages={ 1--17},
}

\bib{ref_LW}{article}{
   author={Lotay, Jason D.},
   author={Wei, Yong},
   title={Stability of torsion-free $G_2$ structures along the Laplacian flow},
   journal={arXiv:1504.07771},
   volume={},
   date={},
   number={},
   pages={},
}

\bib{ref_McL}{article}{
   author={McLean, Robert C.},
   title={Deformations of calibrated submanifolds},
   journal={Comm. Anal. Geom.},
   volume={6},
   date={1998},
   number={4},
   pages={705--747},
}

\bib{ref_ScW}{article}{
   author={Schoen, Richard},
   author={Wolfson, Jon},
   title={Minimizing area among Lagrangian surfaces: the mapping problem},
   journal={J. Differential Geom.},
   volume={58},
   date={2001},
   number={1},
   pages={1--86},
}

\bib{ref_Simon}{article}{
   author={Simon, Leon},
   title={Asymptotics for a class of nonlinear evolution equations, with applications to geometric problems},
   journal={Ann. of Math. (2)},
   volume={118},
   date={1983},
   number={3},
   pages={525--571},
}

\bib{ref_SW}{article}{
   author={Smoczyk, Knut},
   author={Wang, Mu-Tao},
   title={Mean curvature flows of Lagrangians submanifolds with convex potentials},
   journal={J. Differential Geom.},
   volume={62},
   date={2002},
   number={2}, 
   pages={243--257},
}

\bib{ref_St}{article}{
   author={Stenzel, Matthew B.},
   title={Ricci-flat metrics on the complexification of a compact rank one symmetric space},
   journal={Manuscripta Math.},
   volume={80},
   date={1993},
   number={2},
   pages={151--163},
}

\bib{ref_TW}{article}{
   author={Tsui, Mao-Pei},
   author={Wang, Mu-Tao},
   title={Mean curvature flows and isotopy of maps between spheres},
   journal={Comm. Pure Appl. Math.},
   volume={57},
   date={2004}, 
   number={8}, 
   pages={1110--1126},
}

\bib{ref_W1}{article}{
   author={Wang, Mu-Tao},
   title={Mean curvature flow of surfaces in Einstein four-manifolds},
   journal={J. Differential Geom.},
   volume={57},
   date={2001},
   number={2},
   pages={301--338},
}

\bib{ref_W2}{article}{
   author={Wang, Mu-Tao},
   title={Deforming area preserving diffeomorphism of surfaces by mean curvature flow},
   journal={Math. Res. Lett.},
   volume={8},
   date={2001},
   number={5-6},
   pages={651--661},
}

\bib{ref_W3}{article}{
   author={Wang, Mu-Tao},
   title={Long-time existence and convergence of graphic mean curvature flow in arbitrary codimension},
   journal={Invent. Math.},
   volume={148},
   date={2002},
   number={3},
   pages={525--543},
   issn={0020-9910},
}

\bib{ref_W4}{article}{
   author={Wang, Mu-Tao},
   title={Subsets of Grassmannians preserved by mean curvature flows},
   journal={Comm. Anal. Geom.},
   volume={13},
   date={2005},
   number={5},
   pages={981--998},
}

\bib{ref_W5}{article}{
   author={Wang, Mu-Tao},
   title={The mean curvature flow smoothes Lipschitz submanifolds},
   journal={Comm. Anal. Geom.},
   volume={12},
   date={2004},
   number={3},
   pages={581--599},
}

\bib{ref_Yau}{article}{
   author={Yau, Shing-Tung},
   title={On the Ricci curvature of a compact K\"ahler manifold and the complex Monge-Amp\`ere equation. I},
   journal={Comm. Pure Appl. Math.},
   volume={31},
   date={1978},
   number={3},
   pages={339--411},
}

\end{biblist}
\end{bibdiv}

\end{document}